\documentclass[12pt]{article}
\usepackage{amsthm,amscd}
 \usepackage{txfonts}

\usepackage{amsmath}
\usepackage{mathrsfs}
\usepackage{enumerate}
\newenvironment{tightenumerate}{
\begin{enumerate}[(1)]
  \setlength{\itemsep}{3pt}
  \setlength{\parskip}{0pt}
}{\end{enumerate}}
\usepackage[english]{babel}
\usepackage{color}
\usepackage{hyperref}
\usepackage{lastpage}
\usepackage{fancyhdr}
\usepackage{geometry}
\usepackage{graphicx}
\hypersetup{
    colorlinks,
    citecolor=black,
    filecolor=black,
    linkcolor=black,
    urlcolor=black
}
\geometry{
  top=1.2in,           
  inner=1.2in,
  outer=1.2in,
  bottom=1.3in,  
    headheight=-2ex,       
    headsep=4ex,        
}

\usepackage{fancyhdr}
\pagestyle{fancy}
\headheight=15pt
\fancyhead{}
\fancyfoot{}

\fancyhead[L]{\rightmark}
\fancyhead[R]{\textbf{\thepage}}
\setcounter{secnumdepth}{3}
\usepackage{tocloft}

\usepackage{mathtools}
\mathtoolsset{showonlyrefs}
\allowdisplaybreaks[1]


\numberwithin{equation}{section}
\theoremstyle{plain}
	\newtheorem{theorem}{Theorem}[section]           
	\newtheorem{corollary}[theorem]{Corollary}             
	\newtheorem{lemma}[theorem]{Lemma}                 
	\newtheorem{proposition}[theorem]{Proposition}          
	\newtheorem{assumption}{Assumption}[section]  
\theoremstyle{definition}
	\newtheorem{definition}[theorem]{Definition}                 
\theoremstyle{remark}
	\newtheorem{remark}[theorem]{Remark}                

\newcommand\bR{\mathbf{R}}
\newcommand\bE{\mathbf{E}}
\newcommand\bF{\mathbf{F}}
\newcommand\bP{\mathbf{P}}

\newcommand\bN{\mathbf{N}}

\newcommand\cB{\mathcal{B}}
\newcommand\cC{\mathcal{C}}

\newcommand\cE{\mathcal{E}}
\newcommand\cF{\mathcal{F}}

\newcommand\cI{\mathcal{I}}

\newcommand\cL{\mathcal{L}}

\newcommand\cN{\mathcal{N}}
\newcommand\cO{\mathcal{O}}
\newcommand\cP{\mathcal{P}}

\newcommand\cX{\mathcal{X}}
\newcommand\cZ{\mathcal{Z}}
\newcommand\cU{\mathcal{U}}

\title{\vspace{-1.2in}On Classical Solutions of Linear Stochastic Integro-Differential Equations\vspace{-0.5cm}}

\begin{document}

\maketitle

\begin{tabular}{l}
{\large\textbf{James-Michael Leahy}} \vspace{0.2cm}\\
 The University of Edinburgh, E-mail: \href{J.Leahy-2@sms.ed.ac.uk}{J.Leahy-2@sms.ed.ac.uk}\vspace{0.4cm} \\
{\large \textbf{Remigijus Mikulevi\v{c}ius}}\vspace{0.2cm}\\
 The University of Southern California, E-mail: \href{Mikulvcs@math.usc.edu }{mikulvcs@math.usc.edu}\vspace{0.4cm}\\
 {\large\textbf{Abstract}} \vspace{0.2cm} \\
 \begin{minipage}[t]{0.9\columnwidth}%
 We prove the existence of classical solutions to parabolic linear stochastic
 integro-differential equations with  adapted coefficients using
 Feynman-Kac transformations, conditioning, and the interlacing of
 space-inverses of stochastic flows associated with the equations. The
 equations are forward and the derivation of existence does not use the
 ``general theory" of SPDEs. Uniqueness is proved in the class of classical
 solutions with polynomial growth.
 \end{minipage}
\end{tabular}

\tableofcontents
\thispagestyle{empty}

\section{Introduction}

Let $\left( \Omega ,\cF,\mathbf{P}\right) $ be a complete filtered
probability space and $\tilde{\cF}_0$ be a sub-sigma-algebra of $\cF$.  We assume that this probability space supports a sequence $(w_{t}^{1;\varrho})_{\varrho\ge 1}$, $t \ge 0$, $\varrho\in \bN$,  of  independent  one-dimensional Wiener processes and a  Poisson random measure $p^{1}(dt,dz )$ on $ (\bR_+
\times Z^{1},$ $ \mathcal{B}(\bR_+ \otimes \mathcal{Z}^{1} )$ with intensity measure $\pi^{1}(dz)dt$, where  $(Z^{1}, \mathcal{Z}^{1},\pi^{1})$ is a
sigma-finite measure space.  We also assume that $(w_{t}^{1;\varrho})_{\varrho\ge 1}$ and $p^{1}(dt,dz )$ are independent of $\cF_0$. 
Let  $\mathbf{F}=(\cF_{t})_{t\ge 0}$ be
the standard augmentation of the filtration $( \bar{\cF}
_t)_{t\ge 0}$, where for each $t\ge 0$,
\begin{equation*}
\bar{\cF}_t=\sigma \left(\tilde{\cF}_0,(w^{1}_{s})_{\varrho\ge 1},\;p^{1}( [0,s],\Gamma)
:s\le t, \;\Gamma \in \mathcal{Z}^{1}\right).
\end{equation*}%
For each real number $T>0$, we let  $\mathcal{R}_T$, $\mathcal{O}_T$,  and  $\mathcal{P}%
_T $ be the $\mathbf{F}$-progressive, $\mathbf{F}$-optional,  and $\mathbf{F}$-predictable
sigma-algebra on $\Omega\times [0,T] $, respectively. Denote by
$
q^{1}(dt,dz )=p^{1}(dt,dz) -\pi^{1}(dz)dt
$
the compensated Poisson random measure.  Let $D^{1},E^{1},V^{1}\in Z$ be disjoint  $\cZ^{1}$-measurable subsets such that $D^{1}\cup E^{1}\cup V^{1}=Z^{1}$ and  $%
\pi(V^{1})<\infty$.  Let $( Z^{2},\mathcal{Z}^{2},\pi^{2})$ be a sigma-finite measure space and $ D^{2}, E^{2}\in  Z^{2}$ be disjoint $ \cZ^{2}$-measurable subsets such that $ D^{2}\cup E^{2}= Z^{2}$.\\ 
\indent Fix an arbitrary positive real number $T>0$ and integers $d_1,d_2\geq 1$. Let $%
\alpha \in ( 0,2]$ and let $\tau\le T$ be a stopping time. Let $\cF%
_{\tau}$ be the stopping time sigma-algebra associated with $\tau$ and let $%
\varphi:\Omega \times \bR^{d_1}\rightarrow \bR^{d_2}$ be $\cF%
_{\tau }\otimes \mathcal{B}(\bR^{d_1})$-measurable. We consider the
system of stochastic integro-differential equations  on $[0,T]\times 
\bR^{d_1}$ given by 
\begin{align}\label{eq:FullSIDE} 
du^l_t &=\left(( \mathcal{L}^{1;l}_{t}+\mathcal{L}^{2;l}_t)u_t+\mathbf{1}_{[1,2]}(\alpha)b^i_t\partial_iu_t^{l}+c^{l\bar l}_tu_t^{\bar l}+f^l_t\right)dt+\left(\mathcal{N}^{1;l\varrho}_tu_t+g^{l\varrho }_t\right)dw^{1;\varrho}_t\notag \\
&\quad +\int_{Z^{1}}\left(\mathcal{I}^{1;l}_{t,z }u_{t-}+h^l_t(z)\right)[\mathbf{1}_{D^{1}}(z)q^{1}(dt,dz )+\mathbf{1}_{E^{1}\cup V^{1}}(z)p^{1}(dt,dz)],\;\;\tau
\leq t\leq T, \notag  \\
u^l_t&=\varphi^l ,\;\;t\leq \tau , \;\;l\in\{1,\ldots,d_2\}, 
\end{align}%
where for $\phi \in C_{c}^{\infty }( \bR^{d_1};\mathbf{R}
^{d_2})$, $k\in\{1,2\},$ and $l\in\{1,\ldots,d_2\}$,
\begin{align*}
\mathcal{L}^{k;l}_{t}\phi(x) :&=\mathbf{1}_{\{2\}}(\alpha)\frac{1}{2}\sigma_t
^{k;i\varrho}(x)\sigma_t
^{k;j\varrho}(x)\partial_{ij}\phi^l(x)+\mathbf{1}_{\{2\}}(\alpha)\sigma^{k;i\varrho}_t(x)\upsilon^{k;l\bar l\varrho}_t(x)\partial_i\phi^{\bar l}(x)\\
&\quad  +\int _{D^{k}}\rho^{k;l\bar l}_t(x,z )\left(\phi^{\bar l} (x+H^{k}_t(x,z ))-\phi^{\bar l} (x)\right)\pi^{k}(dz ) \\
&\quad +\int _{D^{k}}\left(\phi^{ l} (x+H^{k}_t(x,z ))-\phi^{ l} (x)-%
\mathbf{1}_{(1,2]}(\alpha)H^{k;i}_t(x,z )\partial_i\phi^l(x)\right)\pi^{k}(dz ) \\
&\quad +\mathbf{1}_{\{2\}}(k)\int _{E^{2}}\left((I_{d_2}^{l\bar l}+\rho^{2;l\bar l}_{t}(x,z ))\phi^{\bar l} (x+H^{2}_t(x,z
))-\phi^l (x)\right)\pi^{2}(dz), \\
\mathcal{N}^{1;l\varrho}_t\phi(x):&=\mathbf{1}_{\{2\}}(\alpha)\sigma_t^{1;i\varrho}(x)\partial_i\phi^l(x)+\upsilon^{1;l\bar l\varrho}_t(x)\phi^{\bar l}(x), \;\;\varrho\ge 1, \\
\mathcal{I}^{1;l}_{t,z }\phi(x) :&=(I_{d_2}^{l\bar l}+\rho^{1;l\bar l}_{t}(x,z ))\phi^{\bar l} (x+H^{1}_t(x,z
))-\phi^l (x),
\end{align*}
and 
$$
\int_{D^{k}}\left(|H^{k}_t(x,z)|^{\alpha}+|\rho^{k}_t(x,z)|^2\right)\pi^{k}(dz)+\int_{E^{k}}\left(|H^{k}_t(x,z)|^{1\wedge \alpha}+|\rho^{k}_t(x,z)|\right)\pi^{k}(dz)<\infty.
$$
 The summation convention with respect to repeated indices $i,j\in \{1,\ldots,d_1\}$, $\bar l\in \{1,\ldots,d_2\}$, and $\varrho\in \bN$ is  used here and below. 
The $d_2\times d_2$ dimensional identity matrix is denoted by $I_{d_2}$.
For a subset $A$ of a larger set $X$, $\mathbf{1}_{A}$ denotes the  $\{0,1\}$%
-valued function taking the value $1$ on the set $A$ and $0$ on the
complement of $A$. We assume that for each $k\in\{1,2\}$,
$$
\sigma_{t}^{k}(x)=(\sigma^{k;i\varrho}_{t}(\omega,x))_{1\le i\le d_1,\;\varrho\ge 1},\;\; b_t(x)=(b^i_t(\omega,x))_{1\le i\le d_1},\;\; c_t(x) =( c^{l\bar l}_t(\omega,x))_{1\leq l,\bar l\le d_2},
$$
$$
\upsilon^{k}_t(x)=(\upsilon_t^{k;l\bar l\varrho}(\omega,x))_{1\le l,\bar l\le d_2,\;\varrho\ge 1},\;\; f_t(x) =(f^{i}_t(\omega,x))_{1\leq i\le d_2},\;\;   g_t(x)=(g^{i\varrho}_t(\omega,x))_{1\le i\le d_2,\;\varrho\ge 1},
 $$
 are   random fields  on $\Omega\times [0,T]\times \bR^{d_1}$  that are $\mathcal{R}_T\otimes 
\mathcal{B}(\bR^{d_1})$-measurable. Moreover, for each $k\in\{1,2\}$, we assume  that
$$
H^{k}_{t}(x,z)=(H^{k;i}_{t}(\omega,x,z))_{1\le i\le d_1}, \;
\rho^{k}_{t}(x,z )=( \rho ^{k;l\bar l}_{t}(\omega,x,z )) _{1\leq l,\bar l\le d_2},$$
are random fields on $\Omega\times [0,T]\times \bR^{d_1}\times Z^{k}$ that are $\mathcal{P}%
_{T}\otimes \mathcal{B}(\bR^{d_1})\otimes \mathcal{Z}^{k}$-measurable. Moreover,  we assume that $$h_t(x,z) =(
h^{i}_t(\omega,x,z))_{1\leq i\le d_2},
$$
 is a  random field  on $\Omega\times [0,T]\times \bR^{d_1}$  that is $\mathcal{P}_T\otimes 
\mathcal{B}(\bR^{d_1})$-measurable \\
\indent Systems of linear stochastic integro-differential equations appear in many contexts.  They may be considered as extensions of both first-order symmetric hyperbolic  systems and  linear fractional  advection-diffusion equations.  The equation \eqref{eq:FullSIDE}   also arises in 
non-linear filtering of semimartingales as the equation for the unormalized
filter of the signal (see, e.g.,  \cite{Gr76} and \cite{GrMi11}).  Moreover,  \eqref{eq:FullSIDE}  is intimately related to linear transformations of inverse flows of jump SDEs and it is precisely this connection that we will exploit to obtain solutions.\\
\indent There are various  techniques available to derive the existence and uniqueness of classical solutions of linear  parabolic SPDEs and SIDEs. One approach is to  develop a theory of weak solutions for the equations (e.g. variational, mild solution, or etc...) and then study further regularity in classical function spaces  via an embedding theorem. We refer the reader to  \cite{Pa72, Pa75,  MePi75, KrRo77, Ti77b, Gy82, Wa86,  DaZa92, Kr99, ChKi10,PeZa07, Ha05, RoZh07,  BrNeVeWe08, HoOkUbZh10, LeMi14a} for more information about weak solutions of SPDEs driven by continuous and discontinuous martingales and martingale measures.  This approach is especially important in the non-degenerate setting where some smoothing occurs and has the obvious advantage that it is broader in scope. Another approach is to regard the solution as a function with values in a probability space and use the method deterministic PDEs (i.e. Schauder estimates, see, e.g. \cite{Mi00, MiPr09a}). A third approach is a direct one that uses solutions of stochastic differential equations. The direct method allows  to obtain classical solutions in the entire H{\"o}lder scale while not restricting to integer derivative assumptions for the coefficients and data.  \\
\indent  In this paper, we derive the existence  of a classical solutions of \eqref{eq:FullSIDE} with regular coefficients using a Feynman-Kac-type
transformation and the interlacing of the space-inverse (first integrals 
\cite{KrRo81}) of a stochastic flow associated with the equation. 
The construction of the solution gives an insight into the structure of the
solution as well. We prove that the solution of \eqref{eq:FullSIDE} is unique in the class
of classical solutions  with polynomial growth (i.e. weighted H{\"o}lder spaces). As an immediate corollary of
our main result, we obtain the existence and uniqueness of classical
solutions of linear  integro-differential equations with random coefficients, since the
coefficients $\sigma^1$, $H^1$, $a^1$, $\rho^1$, and free terms $g$ and $h$ can be zero.  Our work here directly extends  the method of characteristics for deterministic first-order partial differential equations and  the well-known Feynman-Kac  formula for deterministic second-order PDEs. \\
\indent In the continuous case (i.e. $H^1\equiv 0, H^{2}\equiv 0, h\equiv 0$), the classical solution of \eqref{eq:FullSIDE} 
was constructed in \cite{KrRo81,Ku81,Ku86,Ro90} (see references therein as well) using the first
integrals of the associated backward SDE. This method was also  used to
obtain classical solutions of (\ref{eq:FullSIDE}) in \cite{DaMeTu07}. In the
references above, the forward Liouville equation for the first integrals of
associated stochastic flow was derived directly. However, since the backward
equation involves a time reversal, the coefficients and input functions are
assumed to be non-random. The generalized solutions of (\ref{eq:FullSIDE})
with $d_2=1$, non-random coefficients, non-degenerate diffusion, and finite
measures $\pi ^{1}=\pi ^{2}$ were discussed in \cite{Me07}.   
In this paper, we give a direct derivation of (\ref{eq:FullSIDE})
and all the equations considered are forward, possibly degenerate, and the
coefficients and input functions are  adapted.  For other interesting and related developments, we refer the reader to \cite{Pr12,Zh13b,Pr14}.\\
\indent This paper is organized as follows. In Section 2, our notation is set forth
and the main results are stated.  In Section 3, the main theorem is proved
and is divided into a proof of uniqueness and existence. In Section 4, the appendix,
auxiliary facts that are used throughout the paper are discussed.

\section{Outline of main results}

For each integer $n\ge 1$, let $\mathbf{R}^{n}$  be the space of $d$-dimensional Euclidean points $x=(x^1,\ldots,\allowbreak x^{n})$. For each $x$, denote by $|x|$ the Euclidean norm of $x$. Let $\bR_+$ denote the set of non-negative real-numbers. Let $\bN$ be the set of natural numbers.  
Elements of $\bR^{d_1}$ and $\bR^{d_2}$ are  understood as column vectors  and elements of $\mathbf{R}^{2d_1}$  and $\mathbf{R}^{2d_2}$ are understood as matrices of dimension $d_1\times d_1$ and $d_2\times d_2$, respectively.  For each integer $n\ge 1$, the norm of an element $x$ of  $\ell_2(\mathbf{R}^{n})$, the space of square-summable $\mathbf{R}^n$-valued  sequences,  is  denoted by $|x|$. For a topological space $(X,\cX)$ we denote the Borel sigma-field on $X$ by $\cB(X)$.

For each $i\in \{1,\ldots,d_1\}$, let $\partial_i=\frac{\partial}{\partial x_i}$ be
the spatial derivative operator with respect to $x_i$ and write  $\partial_{ij}=\partial_i\partial_j$ for each $i,j\in \{1,\ldots,d_1\}$.  For a once differentiable function $%
f=(f^1\ldots,f^{d_1}):\bR^{d_1}\rightarrow\bR^{d_1}$, we denote the gradient  of $f$ by $\nabla
f=(\partial_jf^i)_{1\le i,j\le d_1}$.  Similarly, for a once differentiable function $f=(f^{1\varrho},\ldots,f^{d\varrho})_{\varrho\ge 1} : \bR^{d_1}\rightarrow \ell_2(\bR^{d_1})$, we denote the gradient of $f$ by $\nabla f=(\partial_jf^{i\varrho})_{1\le i,j\le d_1,\varrho\ge 1} $ and understand it as a function from $\bR^{d_1}$ to $\ell_2(\mathbf{R}^{2d_1})$.
For a multi-index $%
\gamma=(\gamma_1,\ldots,\gamma_d)\in\{0,1,2,\ldots,\}^{d_1}$ of length $%
|\gamma|:=\gamma_1+\cdots+\gamma_d$, denote by $\partial^{\gamma}$ the
operator $\partial^\gamma=\partial_1^{\gamma_1}\cdots \partial_d^{\gamma_d}$, where $\partial_i^0$ is the identity operator for all $i\in\{1,\ldots,d_1\}$. For each integer $d\ge 1$, we denote by $C_c^{\infty}(\bR^{d_1}; \bR^{d})$ the space of infinitely differentiable functions with compact support in $\bR^{d}$. 

\indent For a Banach space $V$ with norm $|\cdot |_{V}$, domain $Q$ of $%
\mathbf{R}^{d}$, and continuous function $f:Q\rightarrow V$,  we define 
$$
|f|_{0;Q;V}=\sup_{x\in Q}|f(x)|
$$
and
$$
[f]_{\beta;Q;V}=\sup_{x,y\in Q,x\neq y}\frac{|
f(x)-f(y)|_{V}}{|x-y|_{V}^{\beta }},\;\;\beta \in (0,1].$$
For each real number $\beta\in \mathbf{R}$, we write  $\beta =[\beta]^-+\{\beta\}^+$, and $\{\beta\}^+\in (0,1]$.   For a Banach space $V$ with norm $|\cdot |_{V}$,  real number $\beta>0$, and domain $Q$ of $%
\mathbf{R}^{d}$, we denote by $\cC^{\beta }(Q;V)$ the Banach space of
all bounded continuous functions $f:Q\rightarrow V$ having finite norm 
\begin{equation*}
|f|_{\beta ;Q;V}:=\sum_{| \gamma |\leq [\beta ]^-
}|\partial^{\gamma }f|_{0;Q;V}+\sum_{|\gamma|=[\beta]^-}[\partial^{\gamma}f]_{\{\beta\}^+ ;Q;V}.
\end{equation*}%
When $Q=\mathbf{R}^{d_1}$ and $V=\mathbf{R}^n$ or $V=\ell_2(\bR^n)$ for any integer $n\ge 1$, we drop the subscripts $Q$  and $V$ from the norm  $| \cdot |_{\beta;Q;V}$ and write $|\cdot |_{\beta}
$.   For a Banach space $V$ and for each $\beta>0$, denote by  $%
\cC_{loc}^{\beta}(\bR^d;V)$   the Fr\'echet space   of   continuous functions   $f:\mathbf{R}^d\rightarrow V$ satisfying $f\in \cC^{\beta}(Q;V)$ for all
bounded domains $Q\subset \mathbf{R}^{d}$. We call a function $f:\mathbf{R}%
^{d}\rightarrow \mathbf{R}^{d} $ a $\cC_{loc}^{\beta}(\bR^d;\bR^d)$-diffeomorphism  if $f$
is a homeomorphism and both $f$ and its inverse $f^{-1}$ are in $\cC_{loc}^{\beta}(\bR^d;\bR^d)$. \\
\indent For a Fr\'echet space $\chi$, we denote by $D([0,T];\chi)$ the space of $\chi$%
-valued c\`{a}dl\`{a}g functions on $[0,T]$.  Unless otherwise specified,  we endow   $D([0,T];\chi)$  with the supremum semi-norms.  

The notation $N=N(\cdot ,\cdots,\cdot )$ is used to denote a positive
constant depending only on the quantities appearing in the parentheses. In a
given context, the same letter is often used to denote different constants
depending on the same parameter. If we do not specify to which space  the parameters $%
\omega ,t,x,y,z$ and $n$ belong, then we  mean $\omega \in \Omega $, $%
t\in [ 0,T]$, $x,y\in \bR^{d_1}$, $z\in Z^{k}$, and $n\in\mathbf{N}$. 

Let $r_{1 }(x):=\sqrt{ 1+|
x|^{2}},\;x\in \bR^{d_1}$. Let us introduce  some regularity conditions on the coefficients and free terms.  We consider these assumptions for $\bar{\beta}>1\vee \alpha$ and $\tilde{\beta}>\alpha$.

\begin{assumption}[$\bar\beta$] \label{asm:regcoeffclassicalwcorrec}
\begin{tightenumerate}
\item There is a constant  $N_0>0$ such that for each $k\in \{1,2\}$ and all $\omega,t\in \Omega\times [0,T]$, 
$$
	|r_{1}^{-1}b_t|_{0}+|\nabla b_t|_{\bar{\beta}-1}+|r_1^{-1}\sigma ^{k}_t|_{0}+|\nabla \sigma ^{k}_t|_{\bar{%
\beta}-1} \le N_{0}.
$$
Moreover, for each $k\in \{1,2\}$ and all $(\omega,t,z)\in\Omega\times [0,T]\times ( D^{k}\cup E^{k})$,
$$
|r_{1}^{-1}H^{k}_t(z)| _{0}\le K^k_t(z) \quad \textrm{and} \quad |\nabla
H^{k}_t(z)| _{\bar{\beta} -1} \leq \bar K^{k}_t(z)
$$
where $K^{k},\bar K^k: \Omega \times[ 0,T]\times (D^{k}\cup E^{k})\rightarrow \mathbf{R}_+$ are $\mathcal{P}_{T}\otimes \mathcal{Z}^{k}$-measurable functions satisfying
$$
K^{k}_t(z)+\bar K^{k}_t(z)+\int_{D^{k}}\left(K^{k}_t(z)^{\alpha }+\bar K^{k}_t(z)^{2 }\right)\pi^{k}
(dz)+\int_{E^{k}}\left(K^{k}_t(z)^{1\wedge\alpha}+\bar K^{k}_t(z)\right)\pi^{k}(dz)\leq
N_{0},
$$
for all
$(\omega,t,z)\in\Omega\times [0,T]\times ( D^{k}\cup E^{k})$.
\item 
For each $k\in\{1,2\},$ there is a constant $\eta^{k}\in (0,1)$ such that for all $(\omega ,t,x,z)\in \{(\omega ,t,x,z)\in \Omega \times
[ 0,T]\times\bR^{d_1}\times (D^{k}\cup E^{k}):|\nabla H_t ^{k}(\omega,x,z)|>\eta^{k} \},$ 
$$
\left|\left( I_{d_1}+\nabla
H^{k}_t(x,z)\right) ^{-1}\right|\leq N_{0}.
$$
\end{tightenumerate}
\end{assumption}

\begin{assumption}[$\tilde{\beta}$] \label{asm:regzerofreeclassicalwcorrec}  There is a constant  $N_0>0$ such that for each $k\in \{1,2\}$  and all $(\omega,t)\in \Omega\times [0,T]$,
$$
|c_t|_{\tilde{\beta}}+|\upsilon^{k}_t|_{\tilde{\beta}}+|r_{1}^{-\theta }f_t|_{\tilde{\beta}}+|r_{1}^{-\theta }g_t|_{%
\tilde{\beta}}\leq N_{0}.
$$
Moreover, for each $k\in \{1,2\}$ and all  $(\omega,t,z)\in \Omega\times [0,T]\times ( D^{k}\cup E^{k})$,
$$
|\rho^{k}
_{t}(z)|_{\tilde{\beta}} \le l^{k}_t(z),\quad |r_{1}^{-\theta }h_t(z)|_{\tilde{\beta}}\leq l^{k}_t(z),
$$
where $l^{k}: \Omega \times[ 0,T]\times Z^{k}\rightarrow \mathbf{R}_+$ are $\mathcal{P}_{T}\otimes \mathcal{Z}^{k}$-measurable function satisfying
$$
l^{k}_t(z)+\int_{D^{k}}l^{k}_t(z)^{2}\pi^{k}(dz)+\int_{E^{k}}l^{k}_t(z)\pi^{k}(dz)\leq N_{0},
$$
for all
$(\omega,t,z)\in \Omega\times [0,T]\times ( D^{k}\cup E^{k})$.
\end{assumption}

\begin{remark}
It follows from Lemma \ref{lem:compositediffeo} and Remark \ref{rem:GradientSmallInverse} that if Assumption \ref{asm:regcoeffclassicalwcorrec}$(\bar{\beta})$ holds for some $\bar{\beta}>1\vee \alpha$, then for all $\omega,t,$ and $z\in D^{k}\cup E^{k}$,
$
x\mapsto \tilde{H}^{k}_t(x,z):=x+H^{k}_t(x,z) 
$
is a diffeomorphism.
\end{remark}
Let  Assumptions \ref{asm:regcoeffclassicalwcorrec}$(\bar{\beta})$ and \ref{asm:regzerofreeclassicalwcorrec}$(\tilde{\beta}$)  hold for some   $\bar{\beta}>1\vee \alpha$ and  $\tilde{
\beta}>\alpha $.  In our derivation  of a solutions of \eqref{eq:FullSIDE}, we first obtain solutions of  equations of a special form. Specifically, consider the
system of SIDEs  on $[0,T]\times 
\bR^{d_1}$ given by 
\begin{align}\label{eq:FullSIDEwcorrec} 
d\hat{u}^l_t &=\left((\mathcal{L}^{1;l}_t+\mathcal{L}^{2;l}_t)\hat{u}_t+\hat{b}^i_t\partial_iu_t^{l}+\hat{c}^{l\bar l}_tu_t^{\bar l}+\hat{f}^l_t\right)dt +\left(\mathcal{N}^{1;l\varrho}_t\hat{u}_t+g^{l\varrho}_t\right)dw^{1;\varrho}_t\notag  \\
&\quad +\int_{Z^{1}}\left(\mathcal{I}^{1;l}_{t,z }\hat{u}_{t-}+h^l_t(z)\right)[\mathbf{1}_{D^{1}}(z)q^{1}(dt,dz )+\mathbf{1}_{E^{1}}(z)p^{1}(dt,dz)],\;\;\tau
< t\leq T,  \notag \\
\hat{u}^l_t&=\varphi^l,\;\;t\leq \tau , \;\;l\in\{1,\ldots,d_2\}, 
\end{align}%
where 
\begin{align*}
\hat{b}^i_t(x):&=\mathbf{1}_{[1,2]}(\alpha)b^i_t(x)+\sum_{k=1}^{2}\mathbf{1}_{\{2\}}(\alpha)\sigma_{t}^{k;j\varrho}(x) \partial_{j}\sigma^{k;i\varrho}_{t}(x)\\
&\quad +\sum_{k=1}^{2}\mathbf{1}_{(1,2]}(
\alpha)\int_{D^{k}} \left(H^{k;i}_{t}(x,z )-H^{k;i}_t(\tilde{H}_t^{k;-1}(x,z ),z)\right)\pi^{k} (dz ),\\
\hat{c}^{l\bar l}_t(x):&=c^{l\bar l}_t(x)+ \sum_{k=1}^2\mathbf{1}_{\{2\}}(\alpha)\sigma
^{k;j\varrho}_t(x)\partial_j\upsilon^{k;l\bar l \varrho}_t(x)\\
&\quad +\sum_{k=1}^2\int _{D^{k}}(\rho^{k;l\bar l}_t (x,z)-\rho^{k;l\bar l}_t (\tilde{H}^{k;-1}_t(x,z),z))
\pi^{k} (dz),\\
\hat{f}^l_t(x):&=f^l_t(x)+\sigma ^{1;j\varrho}_t(x)\partial_jg^{l\varrho}_t(x) +\int_{D^{1}} \left(h^l_t(x,z)- h^l_t(\tilde{H}^{1;-1}_t(x,z),z)\right)\pi^{1}(dz).
\end{align*}
\indent Let  $(w_{t}^{2;\varrho})_{\varrho\ge 1}$, $t\ge 0$, $\varrho\in \bN$, be a sequence of  independent  one-dimensional  Wiener processes.  Let $p^{2}(dt,dz )$ be a Poisson random measure  on $( [0,\infty)
\times Z^{2},\mathcal{B}([0,\infty)\otimes \mathcal{Z}^{2})$ with intensity measure $\pi^{2}(dz)dt$. Extending the probability space if necessary, we take  $w^{2}$ and $p^{2}(dt,dz)$ to be independent of $w^{1}$ and $p^{1}(dt,dz)$.
 Let 
\begin{equation*}
\hat{\cF}_t=\sigma \left((w^{2}_{s})_{\varrho\ge 1},\;p^{2}( [0,s],\Gamma)
:s\le t, \;\Gamma \in \mathcal{Z}^{2}\right)
\end{equation*}%
and $\tilde{\mathbf{F}}=\left(\mathcal{\tilde{F}}_{t}\right)_{t\le T}$ be
the standard augmentation of $\left( \cF_{t}\vee \hat{\cF}%
_t\right)_{t\le T}$.  Denote by
$
q^{2}(dt,dz )=p^{2}(dt,dz) -\pi^{2}(dz)dt
$
the compensated Poisson random measure. We
associate with the SIDE \eqref{eq:FullSIDEwcorrec}, the  $\tilde{\mathbf{F}}$-adapted stochastic flow $X_{t}=X_{t}(x)=X_{t}( \tau ,x),$ $(t,x)\in [0,T]\times\bR^{d_1}$, generated by the SDE
\begin{align} \label{eq:repflow}
dX_t &=-\mathbf{1}_{[1,2]}(\alpha)b_t(X_t)dt+\sum_{k=1}^2\mathbf{1}_{\{2\}}(\alpha)%
\sigma^{k;\varrho}_t (X_{t})dw^{k;\varrho}_{t}\notag \\
&\quad-\sum_{k=1}^2\int_{D ^{k}} H^{k}_t(\tilde{H}_t^{k;-1}(X_{t-},z ),z) [p^{k}( dt,dz)-\mathbf{1}_{(1,2]}(\alpha)\pi^{k}(dz)dt]\notag\\
&\quad -\sum_{k=1}^2\int_{E^{k}} H^{k}_t(\tilde{H}_t^{k;-1}(X_{t-},z ),z) p^{k}( dt,dz),\;\;\tau<t\le T, \notag  \\
X_{t} &=x,\;\;t\leq \tau,
\end{align}
and the $\tilde{\mathbf{F}}$-adapted random field $\Phi_t(x)=\Phi_t(\tau,x)$, $(t,x)\in [0,T]\times\bR^{d_1}$, solving the linear SDE given by
\begin{align}\label{eq:FeynmannKacTransformationMain}
d\Phi_{t}(x)& =\left(c_t(X_t(x))\Phi_{t}(x)+f_t(X_{t}(x))\right)dt+\sum_{k=1}^2\upsilon^{k;\varrho}_t(X_{t}(x))\Phi_{t}(x)dw^{k;\varrho}_{t}+g^{\varrho}_t(X_{t}(x))dw^{1;\varrho}_{t}\notag\\
&\quad +\sum_{k=1}^2\int_{Z^{k}}\rho^{k}_t (\tilde{H}%
^{k;-1}_t(X_{t-}(x),z),z)\Phi_{t-}(x)[\mathbf{1}_{D^{k}}(z)q^{k}(dt,dz )+\mathbf{1}_{E^{k}}(z)p^{k}(dt,dz)]  \notag\\
& \quad +\int_{Z^{1}}h_t(\tilde{H}%
^{1;-1}_t(X_{t-}(x),z),z)[\mathbf{1}_{D^{1}}(z)q^{1}(dt,dz )+\mathbf{1}_{E^{1}}(z)p^{1}(dt,dz)]  ,\;\;\tau<t\le  T, \notag \\
\Phi_{t}(x)& =\varphi(x),\;\;t\leq \tau.
\end{align}
\indent The coming theorem  is our existence, uniqueness, and representation theorem for \eqref{eq:FullSIDEwcorrec}. Let us describe our solution class. For each $\beta'\in(0,\infty)$, denote by $\mathfrak{C}^{\beta'}( \bR^{d_1};\bR^{d_2}) $ the 
 linear space of  all $\mathbf{F}$-adapted random fields $v=v_t(x)$  such that $\bP$-a.s.\ $$\mathbf{1} _{[ \tau _{n},\tau _{n+1})}r_{1}^{-\lambda_{n}
}v\in D([0,T];\cC^{\beta'}(\bR^{d_1},\bR^{d_2})),$$
where $(\tau_n)_{n\ge 0}$ is an increasing sequence of 
$\mathbf{F}$-stopping times with $\tau_0=0$ and $%
\tau _{n}=T$ for sufficiently large $n,$ and where for each $n$, $\lambda_{n}$ is a 
positive $\cF_{\tau _{n}}$-measurable random variable.  
\begin{theorem}\label{thm:repwcorrec}
Let  Assumptions \ref{asm:regcoeffclassicalwcorrec}$(\bar{\beta})$ and \ref{asm:regzerofreeclassicalwcorrec}$(\tilde{\beta})$ hold for some  $\bar{\beta}>1\vee \alpha$ and   $\tilde{
\beta}>\alpha $.  For each stopping time $\tau\le T$ and $\cF%
_{\tau }\otimes \mathcal{B}(\bR^{d_1})$-measurable random field $\varphi$ such that   for some $\beta '\in (\alpha ,\bar{\beta}\wedge \tilde{\beta} )$  and $\theta'\geq 0$, $\bP$-a.s.\
$ r_{1}^{-\theta'}\varphi\in\cC^{\beta'}(\mathbf{R}
^{d_1};\bR^{d_2})$, there exists a unique solution $\hat{u}=\hat{u}(\tau)$ of \eqref{eq:FullSIDEwcorrec}
in $\mathfrak{C}^{\beta'}( \bR^{d_1};\mathbf{R}%
^{d_2})$ and for all $(t,x)\in [0,T]\times \bR^{d_1}$, $\bP$-a.s.\
\begin{equation}\label{eq:conditionexpectationrepofsolution}
\hat{u}_t(\tau,x)=\mathbf{E}\left[\Phi_t (\tau,X_{t}^{-1}(\tau ,x))|\cF_{t}\right].
\end{equation}%
Moreover,  for each $\epsilon>0$ and $p\geq 2$, 
\begin{equation}\label{eq:estofoptionalprojection}
\mathbf{E}\left[\sup_{t\le T}| r_{1}^{-\theta \vee \theta'-\epsilon}\hat{u}_t(\tau)| _{\beta'}^{p}\big|\cF_{\tau }%
\right] \leq N(|  r_{1}^{-\theta'}\varphi%
|_{\beta'}^{p}+1),
\end{equation}
for a constant  $N=N(d_1,d_2,p,N_{0},T,\beta ',\eta^{1} ,\eta^{2},\epsilon,\theta,\theta')$.
\end{theorem}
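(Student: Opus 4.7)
The strategy is constructive: define
\[
\hat{u}_t(\tau,x) := \mathbf{E}\!\left[\Phi_t(\tau,X_t^{-1}(\tau,x)) \,\big|\, \cF_t\right],
\]
and show that this random field lies in $\mathfrak{C}^{\beta'}(\bR^{d_1};\bR^{d_2})$, satisfies \eqref{eq:estofoptionalprojection}, solves \eqref{eq:FullSIDEwcorrec}, and is the unique such solution. The auxiliary noises $w^2, p^2$ and the enlarged filtration $\tilde{\mathbf{F}}$ are introduced specifically so that (i) the flow $X_t$ absorbs the transport parts of $\mathcal{L}^1+\mathcal{L}^2$ along characteristics, and (ii) the $\cF_t$-conditional expectation averages out the $k=2$ martingale contributions, replacing them by compensators that reconstitute the non-local generator $\mathcal{L}^2$ acting on the $\mathbf{F}$-adapted field $\hat u$.

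\textbf{Regularity and moment estimates.} Assumption \ref{asm:regcoeffclassicalwcorrec}$(\bar{\beta})$ with $\bar\beta>1\vee\alpha$, together with standard jump-SDE flow-regularity results (Section 4), yields that $X_t(\tau,\cdot)$ is a $\cC_{loc}^{\beta'}$-diffeomorphism whose derivatives and those of $X_t^{-1}$ admit weighted $L^p$-moments of every order. Similarly, under Assumption \ref{asm:regzerofreeclassicalwcorrec}$(\tilde\beta)$, the linear SDE \eqref{eq:FeynmannKacTransformationMain} produces a field $\Phi_t(\tau,\cdot)\in\cC^{\beta'}(\bR^{d_1};\bR^{d_2})$ with weighted bounds of the form $\mathbf{E}[\sup_{t\le T}|r_{1}^{-\theta'-\epsilon/2}\Phi_t|_{\beta'}^{p}\mid \cF_\tau]\le N(|r_1^{-\theta'}\varphi|_{\beta'}^{p}+1)$. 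A composition lemma in the weighted H\"older scale (Section 4), together with conditional Jensen's inequality, then delivers \eqref{eq:estofoptionalprojection} and membership of $\hat u$ in $\mathfrak{C}^{\beta'}$ after localising by stopping times $\tau_n$ that flatten the random polynomial growth exponents.

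\textbf{Verification of the SIDE.} Apply the It\^o--Wentzell formula for jump SDEs (appendix) to the $\tilde{\mathbf{F}}$-adapted random field $V_t(x):=\Phi_t(X_t^{-1}(x))$. The composition generates exactly the Stratonovich-type corrections $\mathbf{1}_{\{2\}}(\alpha)\sigma^{k;j\varrho}\partial_j\sigma^{k;i\varrho}$, $\mathbf{1}_{\{2\}}(\alpha)\sigma^{k;j\varrho}\partial_j\upsilon^{k;l\bar l\varrho}$, $\sigma^{1;j\varrho}\partial_j g^{l\varrho}$ from It\^o covariations, plus the jump change-of-variable differences $H^k_t-H^k_t\circ\tilde H^{k;-1}_t$, $\rho^k_t-\rho^k_t\circ\tilde H^{k;-1}_t$, $h_t-h_t\circ\tilde H^{1;-1}_t$, which together account for the excess of $\hat b, \hat c, \hat f$ over $b, c, f$. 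The stochastic integrals against $w^2$, $q^2$, and $p^2|_{E^2}$ that survive are $\tilde{\mathbf{F}}$-martingales whose $\cF_t$-conditional expectations vanish by independence of $w^2, p^2$ from $\mathbf{F}$; conditioning therefore replaces each by its compensator, and these compensators assemble into the local and non-local parts of $\mathcal{L}^2$ acting on $\hat u$. The $w^1$, $q^1$, and $p^1|_{E^1\cup V^1}$ integrals pass through the conditioning unchanged, since their integrands are $\mathcal{P}_T\otimes\mathcal{B}(\bR^{d_1})$-measurable functionals that can be identified with the corresponding operators applied to $\hat u$.

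\textbf{Uniqueness and principal obstacle.} For two solutions $\hat u^{(1)}, \hat u^{(2)}\in\mathfrak{C}^{\beta'}$ with common initial data, the same It\^o--Wentzell composition applied to $(\hat u^{(1)}_t-\hat u^{(2)}_t)(X_t(y))$ together with the multiplicative adjoint of $\Phi_t$ yields a closed linear identity with zero data; a weighted BDG/Gronwall argument in $\cC^{\beta'}$, invoking the flow moment estimates and the localising sequence $(\tau_n)$, forces the difference to vanish. The principal technical obstacle is the It\^o--Wentzell composition in the simultaneous presence of (a) infinite-activity small jumps integrated against $q^k$ on $D^k$, demanding delicate $L^2$-control via $K^k, \bar K^k, l^k$, and (b) finite-activity jumps against $p^k$ on $E^k$, which create simultaneous jumps of $X, X^{-1}, \Phi$ that must be interlaced correctly; verifying that the change of variables $y=X_t^{-1}(x)$ converts each $k$-th jump kernel of $V_t$ into the expected operator $\phi\mapsto(I_{d_2}+\rho^k_t)\phi(x+H^k_t(x,z))-\phi(x)$ uniformly in the weighted H\"older norm is the crux of the whole argument.
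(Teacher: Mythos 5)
Your existence argument follows the paper's route: construct $\hat u$ as the optional projection of $\Phi_t(\tau,X_t^{-1}(\tau,x))$, obtain the regularity and the estimate \eqref{eq:estofoptionalprojection} from flow and composition lemmas, and verify the equation via the It\^o--Wentzell formula. Two points are glossed over but are fixable: (i) the paper must first prove (Proposition \ref{Prop:SmallJump}) that the inverse flow of the pure-jump SDE solves the degenerate SIDE at all, and this is done by interlacing continuous flows with finitely many jumps and passing to the limit via the strong convergence theorem of \cite{LeMi14b} --- it is not a black-box citation; (ii) the identification of the optional projection with the stochastic integrals of $\hat u$ is not automatic ("integrands pass through the conditioning") but is established by testing against arbitrary bounded $(\mathbf{F},\mathbf{P})$-martingales using the martingale representation property and uniqueness of the optional projection. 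Also, conditioning does not ``replace martingales by their compensators'': the $\mathcal{L}^2$ drift is already present in \eqref{eq:FullSIDEwcorrec} before conditioning (hidden in $\hat b,\hat c,\hat f$ and $\mathcal{L}^2$ itself); the $w^2,q^2,p^2$ martingale terms simply project to zero.

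The genuine gap is in your uniqueness argument, which is not the paper's and, as sketched, does not close. You propose to compose the difference $v=\hat u^{(1)}-\hat u^{(2)}$ with the forward flow $X_t(y)$ and the multiplicative adjoint of $\Phi$, obtaining a local martingale issuing from zero. But $X$ and the adjoint weight are $\tilde{\mathbf{F}}$-adapted (they carry the independent noises $w^2,p^2$), while $v$ is $\mathbf{F}$-adapted. The martingale property therefore only yields $\mathbf{E}\bigl[\Psi_t(y)^{-1}v_t(X_t(y))\bigr]=0$, i.e.\ after conditioning on $\cF_t$ one learns that a certain weighted average of $v_t$ against the (random, non-degenerate) law of $X_t(y)$ vanishes for every $y$; this does not force $v_t\equiv 0$. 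If instead you compose with an $\mathbf{F}$-adapted flow, the $\mathcal{L}^2$, $\mathcal{N}^2$, $\mathcal{I}^2$ contributions are not absorbed and the identity does not close. This is precisely why the paper abandons the representation route for uniqueness and instead runs a weighted $L^2$ energy argument: apply It\^o's formula to $|v_t|^2$ on each stochastic interval $[\tau_n,\tau_{n+1})$, multiply by $r_1^{-2\lambda}$, integrate in $x$ (justified by the stochastic Fubini theorem), control the drift by $N\|\bar v_t\|_0^2$ using the divergence theorem, the Jacobian estimates for $\tilde H^{k;-1}$, and Young's inequality, and conclude $\sup_{t\le T}\mathbf{E}Q_t=0$ by Gronwall. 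You would need either to adopt that energy method or to supply a substantially different mechanism for extracting $v\equiv 0$ from the averaged identity.
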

Using It\^{o}'s formula it is easy to check that if $m=1$  and
\begin{equation*}
g_t(x)=0, \quad h_t(x)=0, \quad \textrm{and} \quad \rho^{k}_t(x,z)\ge -1,  
\end{equation*}
for all $(\omega,t,x,z)\in \Omega\times[{[\tau,T]}]\times \mathbf{R}%
^{d_1}\times (D^{k}\cup E^{k}), \;k\in\{1,2\},$
then
\begin{align*}
\Phi_t(x)&=\Psi_t(x)\phi(x)+\Psi_t(x)\int_{]\tau,\tau\vee t]} \Psi_{s}^{-1}(x)f_s(X_s(x))ds,
\end{align*}
where $\bP$-a.s.\ for all $t$ and $x$,
\begin{align}\label{eq:solutionofFK}
\Psi_t(x)&=e^{\int_{[\tau,\tau\vee t]} \left(c_s(X_s(x))-\sum_{k=1}^2\frac{1}{2}\upsilon^{k;\varrho}_s(X_{s}(x))\upsilon^{k;\varrho}_s(X_{s}(x))\right)ds +\sum_{k=1}^2 \int_{]\tau,\tau\vee t]} \upsilon^{k;\varrho}_s(X_{s}(x))dw^{k;\varrho}_{s}}\notag\\
&\quad \cdot e^{-\sum_{k=1}^2\int_{]\tau,\tau \vee t]}\int_{D^{k}}\left(\ln\left(1+\rho^{k}_s(\tilde{H}%
^{k;-1}_s(X_{s-}(x),z),z)\right)-\rho^{k}_s(\tilde{H}%
^{k;-1}_s(X_{s-}(x),z),z)\right)\pi^{k}(dz)ds }\notag\\
&\quad \cdot e^{\sum_{k=1}^2\int_{]\tau,\tau \vee t]}\int_{Z^{k}}\ln\left(1+\rho^{k}_s(\tilde{H}%
^{k;-1}_s(X_{s-}(x),z),z)\right)[\mathbf{1}_{D^{k}}(z)q^{k}(ds,dz )+\mathbf{1}_{E^{k}}(z)p^{k}(ds,dz)] }.
\end{align}
The following corollary  then follows  directly  from  \eqref{eq:conditionexpectationrepofsolution} and the  \eqref{eq:solutionofFK}.

\begin{corollary}\label{cor:Positivity}
Let   $m=1$ and assume that
 \begin{equation*}
g_t(x)=0, \;\; h_t(x,z)=0,\;\;  \rho^{k}_t(x,z)\ge -1,\;\;  \forall (\omega,t,x,z)\in [[\tau,T]]\times \mathbf{R}%
 ^{d_1}\times ( D^{k}\cup E^{k}), \;k\in\{1,2\}.
 \end{equation*}     Moreover, let  Assumptions \ref{asm:regcoeffclassicalwcorrec}$(\bar{\beta})$ and  \ref{asm:regzerofreeclassicalwcorrec}$(\tilde{\beta})$ hold for some  $\bar{\beta}>1\vee \alpha$ and  $\tilde{
 \beta}>\alpha $.     Let $\tau\le T$  be  stopping time and  $\varphi$ be a $\cF%
_{\tau }\otimes \mathcal{B}(\bR^{d_1})$-measurable random field such that   for some $\beta '\in (\alpha ,\bar{\beta}\wedge \tilde{\beta} )$  and $\theta'\geq 0$, $\bP$-a.s.\
$ r_{1}^{-\theta'}\varphi\in\cC^{\beta'}(\mathbf{R}
^{d_1};\bR^{d_2})$.
\begin{tightenumerate}
\item If for all $(\omega,t,x)\in[[\tau,T]] \times\bR^{d_1}$, $f_t(x)\ge 0$ and $\varphi(x)\ge 0$, then  the solution $\hat{u}$ of (\ref{eq:FullSIDE}) satisfies $
\hat{u}_t(x)\ge 0,$ $\bP$-a.s.\ for all $(t,x)\in [0,T]\times%
\bR^{d_1}$.
\item If for all $(\omega,t,x,z)\in [[\tau,T]]\times \mathbf{R}%
^{d_1}\times ( D^{k}\cup E^{k}),$ $k\in\{1,2\},$ $\upsilon^{k}_t(x)= 0,$ $f_t(x)\le 0$, $c_t(x)\le 0$%
, $\varphi(x)\le 1$, and $\rho_t^{k}(x,z)\le 0, $
 then the solution $\hat{u}$ of (\ref{eq:FullSIDE}) satisfies $
\hat{u}_t(x)\le 1$, $\bP$-a.s.\ for all $(t,x)\in [0,T]\times%
\bR^{d_1}$.
\end{tightenumerate}
\end{corollary}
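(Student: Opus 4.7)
The plan is to read both assertions off the representation \eqref{eq:conditionexpectationrepofsolution} after inserting the explicit formula \eqref{eq:solutionofFK} for the Feynman--Kac exponential. Under $m=1$ and $g\equiv h\equiv 0$, Theorem~\ref{thm:repwcorrec} gives
\[
\hat u_t(\tau,x)=\mathbf{E}\bigl[\Phi_t(\tau,X_t^{-1}(\tau,x))\bigm|\cF_t\bigr],
\]
with $\Phi_t(y)=\Psi_t(y)\varphi(y)+\Psi_t(y)\int_{]\tau,\tau\vee t]}\Psi_s^{-1}(y)f_s(X_s(y))\,ds$ and $\Psi_t(y)$ the product of exponentials in \eqref{eq:solutionofFK}. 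The hypothesis $\rho^{k}\ge -1$ makes $1+\rho^{k}(\tilde H^{k;-1}(X_{s-},z),z)\ge 0$, so each factor of $\Psi_t$ is nonnegative (with the convention $e^{-\infty}=0$ to absorb the case $\rho=-1$ at a jump); in particular $\Psi_t(y)\ge 0$ pathwise, and the propagator $\Psi_t(y)\Psi_s(y)^{-1}\ge 0$ is well-defined on $\{\Psi_s>0\}$.

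For part (1), the assumptions $\varphi\ge 0$ and $f\ge 0$ make every factor in the decomposition of $\Phi_t$ nonnegative, so $\Phi_t(y)\ge 0$ $\mathbf{P}$-a.s.; since conditional expectation preserves nonnegativity, $\hat u_t(\tau,x)\ge 0$.

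For part (2), I first observe $\Phi_t(y)\le \Psi_t(y)$ pathwise: $\varphi\le 1$ together with $\Psi\ge 0$ yields $\Psi\varphi\le \Psi$, and $f\le 0$ together with $\Psi_t\Psi_s^{-1}\ge 0$ makes the integral term nonpositive. Next, with $\upsilon^{k}\equiv 0$ the SDE \eqref{eq:FeynmannKacTransformationMain} simplifies to one whose drift is
\[
c_t(X_t(y))\Psi_t(y)+\Psi_{t-}(y)\sum_{k=1}^2\int_{E^{k}}\rho^{k}(\tilde H^{k;-1}(X_{t-}(y),z),z)\,\pi^{k}(dz)\le 0,
\]
since $c\le 0$, $\rho^{k}\le 0$, $\Psi\ge 0$; hence $\Psi_t(y)$ is a nonnegative $\tilde{\mathbf{F}}$-local supermartingale with $\Psi_\tau(y)=1$, so $\mathbf{E}[\Psi_t(y)\mid\tilde{\cF}_{\tau}]\le 1$ for each fixed $y$. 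Combining, $\hat u_t(\tau,x)\le \mathbf{E}[\Psi_t(\tau,X_t^{-1}(\tau,x))\mid\cF_t]$.

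The hardest remaining step is bounding this last conditional expectation by $1$: the supermartingale inequality is pointwise in the starting point $y$, but the evaluation point $y=X_t^{-1}(\tau,x)$ is itself $\tilde{\cF}_t$-measurable, and the conditioning is against the smaller $\sigma$-algebra $\cF_t$. A Fubini/disintegration argument that uses the independence of the $(w^{2},p^{2})$-driven noise from $\cF$ should reduce the bound to the pointwise supermartingale estimate applied fiberwise at each realization of $y$.
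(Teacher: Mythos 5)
Your treatment of part (1) is correct and is exactly the paper's intended argument: the paper's entire proof is the remark that the corollary ``follows directly'' from \eqref{eq:conditionexpectationrepofsolution} and \eqref{eq:solutionofFK}, i.e.\ $\Psi_t\ge 0$ because it is a product of exponentials (equivalently a Dol\'eans exponential whose jump factors $1+\rho^{k}$ are nonnegative), hence $\Phi_t=\Psi_t\varphi+\Psi_t\int_{]\tau,\tau\vee t]}\Psi_s^{-1}f_s(X_s)\,ds\ge 0$ when $\varphi, f\ge 0$, and the conditional expectation preserves the sign.

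For part (2), the gap you flag at the end is genuine, and it cannot be closed along the route you propose, because the intermediate inequality $\mathbf{E}[\Psi_t(y)\mid\cF_t]\le 1$ you are aiming for is already false for a fixed deterministic $y$: the contribution of the compensated jumps on $D^{1}$ to $\Psi_t(y)$ is the $\cF_t$-measurable factor $\exp\bigl(\int\int_{D^{1}}\ln(1+\rho^{1})\,p^{1}(ds,dz)-\int\int_{D^{1}}\rho^{1}\,\pi^{1}(dz)ds\bigr)$, whose deterministic compensator part $\exp\bigl(-\int\int_{D^{1}}\rho^{1}\pi^{1}(dz)ds\bigr)$ is $\ge 1$ when $\rho^{1}\le 0$ and survives conditioning on $\cF_t$ untouched; the supermartingale bound $\mathbf{E}\Psi_t\le 1$ uses that the $q^{1}$-integral has \emph{full} expectation zero, which conditioning on $\cF_t$ does not provide. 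Concretely, take $d_1=d_2=1$, $b=\sigma^{k}=H^{k}=\upsilon^{k}=c=f=g=h=0$, $\varphi\equiv 1$, $E^{k}=V^{1}=\emptyset$, $\rho^{2}=0$, and $\rho^{1}=\rho(z)\in[-1,0)$ on $D^{1}$ with $0<\pi^{1}(D^{1})<\infty$. All hypotheses of part (2) hold, $X_t(x)=x$, and \eqref{eq:conditionexpectationrepofsolution} gives $\hat u_t(x)=\cE\bigl(\int\int_{D^{1}}\rho\,q^{1}\bigr)_t$, which on the positive-probability event that $p^{1}$ has no atoms in $]\tau,t]\times D^{1}$ equals $\exp\bigl(-(t-\tau)\int_{D^{1}}\rho\,\pi^{1}(dz)\bigr)>1$. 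So not only does your final step fail, but the conclusion $\hat u_t\le 1$ itself fails in the stated generality; the pointwise bound $\Psi_t\le 1$ that the paper's one-line justification tacitly invokes holds only when the compensated contribution is absent (e.g.\ $\rho^{k}\equiv 0$ on $D^{k}$, so that all $\rho$-jumps enter through $p^{k}$ and every factor in \eqref{eq:solutionofFK} is $\le 1$), in which case no supermartingale or disintegration argument is needed at all.
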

\begin{remark}
Since $%
\mathcal{L}^{2}$ can be the zero operator, both Theorem \ref{thm:repwcorrec} and Corollary \ref{cor:Positivity} apply to fully degenerate equations and partial differential equations with random coefficients.
\end{remark}

Now, let us discuss our existence and uniqueness theorem for \eqref{eq:FullSIDE}. We construct the solution of $u=u(\tau)$ of \eqref{eq:FullSIDE} by interlacing the solutions of \eqref{eq:FullSIDEwcorrec} along a sequence of large jump moments (see Section \ref{s:Large
Jumps}). By using an interlacing procedure we are also able to drop the condition of boundedness of $(I+\nabla H^{1}_t(x,z))^{-1}$ on the set $(\omega ,t,x,z)\in $ $\{(\omega ,t,x,z)\in \Omega \times
[ 0,T]\times\bR^{d_1}\times (D^{1}\cup E^{1})$ $:|\nabla H_t ^{1}(\omega,x,z)|>\eta^{k} \}.$  Also, in order to remove the  terms in $\hat{b}$, $\hat{c},$ and $\hat{f}$  that appear in \eqref{eq:FullSIDEwcorrec}, but not in  \eqref{eq:FullSIDE}, we  subtract  terms from the relevant coefficients in the flow and the transformation. However, in order to do this, we need to impose  stronger  regularity assumptions on some of the coefficients and free terms.  We will introduce the  parameters $\mu^{1},\mu^2,\delta^1,\delta^2 \in[0,\frac{\alpha}{2}]$, which essentially allows one to trade-off integrability in $z$ and regularity  in $x$ of the coefficients $H^k_t(x,z),\rho^{k}_t(x,z),h^k_t(x,z)$.  It is worth mentioning that the removal of terms and the interlacing procedure are independent of each other and that it is due only to the weak assumptions on $H^1$ and $\rho^1$ on the set $V^1$ that we do not have moment estimates and a simple representation property like \eqref{eq:estofoptionalprojection} for the solution of \eqref{eq:FullSIDE}. Nevertheless, there is a representation of sorts and we refer the reader to the proof of the coming theorem for an explicit construction of the solution.

We introduce the following assumption for  $\bar{\beta}>1\vee \alpha$, $\tilde{\beta}>\alpha$, and  $\delta^1,\delta^2,\mu^{1},\mu^2\in[0,\frac{\alpha}{2}]$.  

\begin{assumption}[$\bar{\beta},\mu^1,\mu^2,\delta^1,\delta^2$] \label{asm:regclassicalwocorrec}
\begin{tightenumerate}
\item There is a constant  $N_0>0$ such that  for each $k\in \{1,2\}$ and    all $(\omega,t)\in \Omega\times [0,T]$,
\begin{gather*}
	|r_{1}^{-1}b_t|_{0}+|\nabla b_t|_{\bar{\beta}-1}+| \sigma ^{k}_t|_{\bar{%
\beta}+1}\leq N_{0}.
\end{gather*}
\item For each $k\in \{1,2\}$ and all $(\omega,t)\in \Omega\times [0,T]$,
\begin{gather*}
|H^{k}_t(z)|_{0}\le K^k_t(z), \quad |\nabla
H^{k}_t(z)| _{\bar{\beta} -1}, \;\forall z\in D^k,  \\
|r_{1}^{-1}H^{k}_t(z)|_{0}\le K^k_t(z), \quad |\nabla
H^{k}_t(z)| _{\bar{\beta} -1} \leq \bar K^{k}_t(z),\;\forall  z\in E^{k},\\
|\rho ^{k}(t,z)|_{\bar{\beta}}\leq l^{k}_t(z),\;\;\forall  z\in D^k,\quad |r_{1}^{-\theta }h_t(z)|_{\bar{\beta}} \leq l^{1}_t(z),\;\forall z\in D^{1},
\end{gather*}
where $K^{k},\bar K^k,l^k: \Omega \times[ 0,T]\times (D^{k}\cup E^{k})\rightarrow \mathbf{R}_+$ are $\mathcal{P}_{T}\otimes \mathcal{Z}^{k}$-measurable functions satisfying for all $(\omega,t,z)\in \Omega\times [0,T]\times  (D^{k}\cup E^{k})$,
$$
K^{k}_t(z)+\bar K^{k}_t(z)+l^{k}_t(z)\le N_0
$$
and
$$
\int_{D^{k}}\left(K^{k}_t(z)^{\alpha }+\bar K^{k}_t(z)^{2}+l^k_t(z)^{2}\right)\pi^{k}
(dz)+\int_{E^{k}}\left(K^{k}_t(z)^{1\wedge\alpha}+\bar K^k_t(z)\right)\pi^{k}(dz)\leq
N_{0}.
$$
\item For each $k\in \{1,2\}$ and all $(\omega,t)\in \Omega\times [0,T]$,
\begin{gather*}
|\upsilon^{k}_t|_{\bar{\beta}+1}\leq N_{0},\textit{ if } \sigma^k_t\ne 0,\quad |g_t|_{\bar{\beta}+1}\leq N_{0},\textit{ if } \sigma^1_t\ne 0,\\
\sum_{|\gamma|=[\bar{\beta}]^-}| \partial^{\gamma }H^k_t(z))|_{\{\bar{\beta}\}^++\delta^{k}}\le \tilde{K}^k_t(z), \;\forall  z\in D^{k}, \textrm{ if } \{\bar\beta\}^++\delta^k\le 1,\\
\sum_{|\gamma|=[\bar \beta]^-}|\nabla \partial^{\gamma} H^k_t(z)|_0\leq \bar K^{k}_t(z),\; \sum_{|\gamma|=[\bar{\beta}]^-}|\nabla \partial^{\gamma }H^k_t(z))|_{\{\bar{\beta}\}^++\delta^{k}-1}\le \tilde{K}^k_t(z), \;\forall  z\in D^{k}, \textrm{ if } \{\bar \beta\}^++\delta^k>1,\\
\sum_{|\gamma|=[\bar{\beta}]^-}| \partial^{\gamma }\rho^k_t(z))|_{\{\bar{\beta}\}^++\mu^{k}}\le \tilde{l}^k_t(z), \;\forall  z\in D^{k}, \textrm{ if } \{\bar\beta\}^++\mu^k\le 1,\\
\sum_{|\gamma|=[\bar \beta]^-}|\nabla \partial^{\gamma} \rho^k_t(z)|_0\leq l^{k}_t(z),\;\; \sum_{|\gamma|=[\bar{\beta}]^-}|\nabla \partial^{\gamma }\rho^k_t(z))|_{\{\bar{\beta}\}^++\mu^{k}-1}\le \tilde{l}^k_t(z), \;\forall  z\in D^{k}, \textrm{ if } \{\bar\beta\}^++\mu^k>1,\\
\sum_{|\gamma|=[\bar{\beta}]^-}| \partial^{\gamma }h^1_t(z))|_{\{\bar{\beta}\}^++\mu^{1}}\le \tilde{l}^1_t(z), \;\forall  z\in D^{1}, \textrm{ if } \{\bar\beta\}^++\mu^1\le 1,\\
\sum_{|\gamma|=[\bar \beta]^-}|\nabla \partial^{\gamma} h^1_t(z)|_0\leq l^{1}_t(z),\;\; \sum_{|\gamma|=[\bar{\beta}]^-}|\nabla \partial^{\gamma }h_t(z))|_{\{\bar{\beta}\}^++\mu^{1}-1}\le \tilde{l}^1_t(z), \;\forall  z\in D^{1}, \textrm{ if } \{\bar\beta\}^++\mu^1>1,
\end{gather*}
where $\tilde{K}^k,\tilde{l}^k: \Omega \times[ 0,T]\times D^k\rightarrow \mathbf{R}_+$ are $\mathcal{P}_{T}\otimes \mathcal{Z}^{k}$-measurable functions satisfying for all $(\omega,t,z)\in \Omega\times [0,T]\times  D^{k}$,
$$
\tilde{K}^{k}_t(z)+\tilde{l}^{k}_t(z)+\int_{D^{k}}\left(\tilde{K}^k_t(z)^{\frac{\alpha}{\alpha-\delta^{k}}}\mathbf{1}_{[0,\frac{\alpha}{2}]}(\delta^{k})+\tilde{K}^k_t(z)^2+\tilde{l}^k_t(z)^{\frac{\alpha}{\alpha-\mu^{k}}}\mathbf{1}_{[0,\frac{\alpha}{2}]}(\mu^{k})+\tilde{l}^k_t(z)^2\right)\pi^{k}
(dz)\leq
N_{0}.
$$
\item 
There is a constant $\eta^{2}\in (0,1)$ such that for all $(\omega ,t,x,z)\in \{(\omega ,t,x,z)\in \Omega \times
[ 0,T]\times\bR^{d_1}\times Z^{2}:|\nabla H_t ^{2}(\omega,x,z)|>\eta^{2} \},$
$$
\left|\left(I_{d_1}+\nabla
H^{2}_t(x,z)\right) ^{-1}\right|\leq N_{0}.
$$
\end{tightenumerate}
\end{assumption}

\begin{assumption}[$\tilde{\beta}$]\label{asm:regclassicalwocorrectild}
\begin{tightenumerate}
\item There is a constant  $N_0>0$ such that  for each $k\in \{1,2\}$ and    all $(\omega,t)\in \Omega\times [0,T]$,
\begin{gather*}
|c_t|_{\tilde{\beta}}+|r_{1}^{-\theta }f_t|_{\tilde{\beta}} \le N_{0},\\
 |\upsilon^{k}_t|_{\tilde{\beta}}\leq N_{0}, \textit{ if }\sigma^k_t=0,\quad|g_t|_{\tilde{\beta}}\leq N_{0}, \textit{ if }\sigma^1_t=0, \\
|\rho ^{k}(t,z)|_{\tilde{\beta}}\leq l^{k}_t(z),\;\;\forall  z\in E^{k},\quad |r_{1}^{-\theta }h_t(z)|_{\tilde{\beta}} \leq l^{1}_t(z),\;\forall z\in E^1,
\end{gather*}
where for all $(\omega,t)\in \Omega\times [0,T]$,
$
\int_{E^{k}}l^k_t(z)\pi^{k}(dz)\leq
N_{0}.
$
\item There exist processes $\xi,\zeta:\Omega \times [ 0,T]\times V^{1}\rightarrow \mathbf{R}_{+}$ that are $\mathcal{P}_{T}\otimes 
\cZ^{1}$measurable  satisfying
$$
|r_1^{-\xi_t(z)}H^{1}_t(z)|_{\tilde{\beta}\vee 1}+|r_1^{-\xi_t(z)}\rho^{1}_t(z)|_{\tilde{\beta}}+|r_1^{-\xi_t(z)}h_t(z)|_{\tilde{\beta}}\leq \zeta_t (z),
$$
for all $(\omega ,t,z)\in \Omega\times [0,T]\times V^{1}$. 
\end{tightenumerate}
\end{assumption}

We now state our existence and uniqueness theorem for \eqref{eq:FullSIDE}.
\begin{theorem}\label{thm:Existwocorrec} Let Assumptions \ref{asm:regclassicalwocorrec}$(\bar \beta,\delta^1,\delta^2,\mu^1,\mu^2)$  and \ref{asm:regclassicalwocorrectild}$(\tilde{\beta})$ hold for some  $\bar{\beta}>1\vee \alpha$,  $\tilde{
\beta}>\alpha $, and $\delta^1,\delta^2,\mu^{1},\mu^2\in[0,\frac{\alpha}{2}]$.   For each stopping time $\tau\le T$ and $\cF%
_{\tau }\otimes \mathcal{B}(\bR^{d_1})$-measurable random field $\varphi$ such that   for some $\beta '\in (\alpha ,\bar{\beta}\wedge \tilde{\beta} )$  and $\theta'\geq 0$, $\bP$-a.s.\
$ r_{1}^{-\theta'}\varphi\in\cC^{\beta'}(\mathbf{R}
^{d_1};\bR^{d_2})$, there
exists a unique solution $u=u(\tau)$ of (\ref{eq:FullSIDE}) in $\mathfrak{C}^{\beta'}( \bR^{d_1};\bR^{d_2})$.  
\end{theorem}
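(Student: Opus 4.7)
Following the outline given in the paragraph just before the statement, the argument has two independent ingredients: (i) absorbing the ``correction'' terms $\hat b-\mathbf{1}_{[1,2]}(\alpha)b$, $\hat c-c$, $\hat f-f$ present in \eqref{eq:FullSIDEwcorrec} but absent from \eqref{eq:FullSIDE} by a redefinition of the coefficients fed into Theorem~\ref{thm:repwcorrec}; and (ii) an interlacing argument along the jump times of the finite-intensity part $\mathbf{1}_{V^{1}}(z)p^{1}(dt,dz)$.

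For (i), I would introduce modified drift, zero-order, and free coefficients
\begin{align*}
\check b^{i}_{t}(x) &:= b^{i}_{t}(x) - \mathbf{1}_{\{2\}}(\alpha)\sum_{k=1}^{2}\sigma^{k;j\varrho}_{t}(x)\partial_{j}\sigma^{k;i\varrho}_{t}(x) \\
&\quad - \mathbf{1}_{(1,2]}(\alpha)\sum_{k=1}^{2}\int_{D^{k}}\bigl(H^{k;i}_{t}(x,z)-H^{k;i}_{t}(\tilde H^{k;-1}_{t}(x,z),z)\bigr)\pi^{k}(dz),
\end{align*}
and define $\check c^{l\bar l}$, $\check f^{l}$ analogously by subtracting from $c^{l\bar l}$, $f^{l}$ the extra sums appearing in $\hat c$, $\hat f$ of \eqref{eq:FullSIDEwcorrec}. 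A direct comparison shows that inserting $(\check b,\check c,\check f)$ into the formulas that define $(\hat b,\hat c,\hat f)$ returns $(\mathbf{1}_{[1,2]}(\alpha)b, c, f)$. Consequently, Theorem~\ref{thm:repwcorrec} applied with the modified $(\check b,\check c,\check f)$, the unchanged $\sigma^{k},\upsilon^{k},g,H^{k},\rho^{k},h$, and with the jump set $E^{1}$ in place of $E^{1}\cup V^{1}$ would produce a solution of \eqref{eq:FullSIDE} with its $V^{1}$-integral dropped.

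For (ii), since $\pi^{1}(V^{1})<\infty$ the point process $p^{1}\!\mid_{V^{1}}$ has finitely many atoms on $[0,T]$; enumerate them as $\tau=:\tau_{0}<\tau_{1}<\tau_{2}<\cdots$ with marks $\zeta_{n}\in V^{1}$ and $\tau_{n}=T$ for $n$ large enough. Starting from $\varphi_{0}:=\varphi$ and $\theta_{0}:=\theta\vee\theta'$, I inductively define $u$ on $[\tau_{n},\tau_{n+1})$ by applying Step~(i) with initial time $\tau_{n}$, initial data $\varphi_{n}$, and growth exponent $\theta_{n}$. At the jump time $\tau_{n+1}$, the pointwise prescription
\[
\varphi_{n+1}^{l}(x) := u^{l}_{\tau_{n+1}-}(x) + \cI^{1;l}_{\tau_{n+1},\zeta_{n+1}}u_{\tau_{n+1}-}(x) + h^{l}_{\tau_{n+1}}(x,\zeta_{n+1})
\]
furnishes the next initial datum; Assumption~\ref{asm:regclassicalwocorrectild}(2) yields $r_{1}^{-\theta_{n+1}}\varphi_{n+1}\in\cC^{\beta'}$ for a finite $\cF_{\tau_{n+1}}$-measurable exponent $\theta_{n+1}$ (roughly $\theta_{n}+\zeta_{\tau_{n+1}}(\zeta_{n+1})$, using the notation of that assumption). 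Concatenating the pieces produces $u\in\mathfrak{C}^{\beta'}$ solving \eqref{eq:FullSIDE}. Uniqueness follows from the same induction: two solutions in $\mathfrak{C}^{\beta'}$ agree on $[\tau_{n},\tau_{n+1})$ by the uniqueness clause of Theorem~\ref{thm:repwcorrec}, and agree at $\tau_{n+1}$ because the jump is prescribed pointwise by the driving atom.

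The technical weight of the proof lies entirely in verifying Step~(i). Two points must be cleared. First, the subtracted $D^{k}$-integrals above have to belong to $\cC^{\bar\beta}$ (more precisely, their gradient must be $(\bar\beta-1)$-Hölder); this rests on the chord identity $H^{k}(x,z)-H^{k}(\tilde H^{k;-1}(x,z),z)=\bigl(\int_{0}^{1}\nabla H^{k}(\cdot,z)\,dr\bigr)H^{k}(x,z)$, which yields an extra factor $|H^{k}(x,z)|$, combined with the finer Hölder seminorm bounds in Assumption~\ref{asm:regclassicalwocorrec}(3) controlled by $\tilde K^{k},\tilde l^{k}$; the trade-off exponents $(\delta^{k},\mu^{k})\in[0,\alpha/2]$ are what lets one balance integrability in $z$ against spatial regularity to close the estimate. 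Second, the inverse-gradient bound $|(I+\nabla H^{1})^{-1}|\le N_{0}$ required by Assumption~\ref{asm:regcoeffclassicalwcorrec}(2) for $k=1$ is not postulated in Assumption~\ref{asm:regclassicalwocorrec}; this must be circumvented by a further decomposition of $D^{1}\cup E^{1}$ along the set $\{|\nabla H^{1}|>\eta^{1}\}$, whose $\pi^{1}$-mass is finite by the integrability of $\bar K^{1}$, and an auxiliary interlacing at those (finitely many in $[0,T]$) jump times. This regularity/decomposition bookkeeping is the principal obstacle; once it is in place, the remainder of the theorem is a routine consequence of Theorem~\ref{thm:repwcorrec}.
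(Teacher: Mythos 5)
Your plan matches the paper's proof in both of its essential ingredients: the correction terms in $\hat b,\hat c,\hat f$ are removed by feeding modified coefficients into Theorem \ref{thm:repwcorrec} and verifying its hypotheses via the chord identity together with the $\tilde K^{k},\tilde l^{k}$ bounds and the trade-off exponents $\delta^{k},\mu^{k}$, and the $V^{1}$-part and the missing bound on $(I_{d_1}+\nabla H^{1})^{-1}$ are both handled by interlacing along a finite set of jump times (the paper merges your two interlacings into one, truncating $H^{1},\rho^{1},h$ on $\{K^{1}_t>\delta\}$ so that the inverse-gradient bound holds with constant $(1-\delta)^{-1}$ and treating those jumps together with the $V^{1}$ atoms along a single sequence of stopping times). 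The proposal is correct and essentially identical in approach.
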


\section{Proof of main theorems}

We will first  prove  uniqueness of  the solution of  \eqref{eq:FullSIDEwcorrec} in the class $\mathfrak{C}%
^{\beta'}( \bR^{d_1};\bR^{d_2})$. The existence part of the proof of  Theorem \ref{thm:repwcorrec} is divided into a series of steps.  In the first step, by appealing to  the representation theorem we  derived for  solutions of continuous SPDEs in Theorem 2.4 in \cite{LeMi14b}, we use an interlacing
procedure and  the strong limit theorem given in Theorem 2.3 in \cite{LeMi14b} to show that the space inverse of the flow  generated by  a jump SDE (i.e. the SDE \eqref{eq:repflow} without the uncorrelated noise) solves a degenerate linear SIDE.   Then we linearly transform the inverse flow of a jump SDE to obtain solutions of degenerate linear SIDEs with free and zero-order terms and an initial condition. In the last step of the proof of  Theorem \ref{thm:repwcorrec}, we  introduce an independent
Wiener process and Poisson random measure as explained above, apply the results we know for fully degenerate equations, and  then take the optional projection of the equation. In the last section, Section \ref{sec:uncorrelated}, we prove Theorem  \ref{thm:Existwocorrec}  using an interlacing procedure and removing the extra terms in $\hat{b}, \hat{c}$ and $\hat{f}$.  The uniqueness  of the solution $u$ of   \eqref{eq:FullSIDE} follows directly from our construction. 
\subsection{Proof of uniqueness for Theorem \ref{thm:repwcorrec} }
\begin{proof}[Proof of Uniqueness for Theorem \ref{thm:repwcorrec}]
Fix a stopping time $\tau\le T$ and $\cF%
_{\tau }\otimes \mathcal{B}(\bR^{d_1})$\allowbreak-measurable random field $\varphi$ such that   for some $\beta '\in (\alpha,\bar{\beta}\wedge \tilde{\beta} )$  and $\theta'\geq 0$, $\bP$-a.s.\
$ r_{1}^{-\theta'}\varphi\in\cC^{\beta'}(\mathbf{R}
^{d_1};\bR^{d_2})$.  In this section we will drop the dependence of processes $t,x, $ and $z$ when we feel it will not obscure the  argument.
Let $\hat{u}^{1}(\tau)$ and $\hat{u}^{2}(\tau)$ be solutions of \eqref{eq:FullSIDEwcorrec} in $\mathfrak{C}^{\beta'}$. It follows that  $v:=\hat{u}^{1}(\tau)-\hat{u}^{2}(\tau)$ solves 
\begin{align}\label{eq:FullSIDEwcorrecuniqueness} 
dv^l_t &=[(\mathcal{L}^{1;l}_t+\mathcal{L}^{2;l}_t)v_t+\hat{b}^i_t\partial_iv_t^{l}+\hat{c}^{l\bar l}_tv_t^{\bar l}]dt +\mathcal{N}^{1;\varrho}_tv^l_tdw^{1;\varrho}_t\notag  \\
&\quad +\int_{Z^{1}}\mathcal{I}^{1;l}_{t,z }v_{t-}[\mathbf{1}_{D^{1}}(z)q^{1}(dt,dz )+\mathbf{1}_{E^{1}}(z)p^{1}(dt,dz)],\;\;\tau
< t\leq T,   \\
v^l_t&=0,\;\;t\leq \tau , \;\;l\in\{1,\ldots,d_2\},\notag
\end{align}
and $\bP$-a.s.\ 
$$\mathbf{1} _{[ \tau _{n},\tau _{n+1})}r_{1}^{-\lambda_{n}
}v\in D([0,T];\cC^{\beta'}(\bR^{d_1},\bR^{d_2})),$$
where $(\tau_n)_{n\ge 0}$ is an increasing sequence of 
$\mathbf{F}$-stopping times with $\tau_0=0$ and $%
\tau _{n}=T$ for sufficiently large $n,$ and where for each $n$, $\lambda_{n}$ is a 
positive $\cF_{\tau _{n}}$-measurable random variable.  Clearly it suffices to take $\tau_1=\tau$ and $\lambda_0=0$.  Thus, $v_{t}(x)=0$ for all $(\omega,t)\in [[\tau_0,\tau_1))$.   Assume that for some $n$, $\bP$-a.s.\ for all $t$ and $x$, $v_{t\wedge \tau_{n}}(x)=0$.  We will show that $\bP$-a.s.\ for all $t$ and $x$, $\tilde{v}_t(x):=v_{(\tau_{n}\vee t)\wedge \tau_{n+1} }(x)=0$. Applying It\^o's formula, for each $x$, $\bP$-a.s.\ for all $t$, we find
\begin{align}\label{eq:FullSIDEIto} 
d|\tilde{v}_t|^2 &=\left(2\tilde{v}^l_t\mathfrak{L}^{1;l}_t\tilde{v}_t+|\mathcal{N}^{1}_t\tilde{v}_t|^2+2\tilde{v}^l_tb^i_t\partial_i\tilde{v}_t^{l}+2\tilde{v}^l_tc_t^{l\bar l}\tilde{v}^{\bar l}_t\right)dt\notag\\
&\quad+\left(2\tilde{v}^l_t\mathfrak{I}^{1;l}_{t,z}\tilde{v}_t+\int_{D^{1}\cup E^{1}}|\mathcal{I}^{1;l}_{t,z }\tilde{v}_{t}|^2\pi^1(dz)\right)dt \notag  \\
&\quad +\left(2v^l_t\mathfrak{L}^{2;l}_t\tilde{v}_t+2\tilde{v}^l_t\mathfrak{I}^{2;l}_{t,z}\tilde{v}_t\right)dt+2v^l_t\mathcal{N}^{1;\varrho}_t\tilde{v}^l_tdw^{1;\varrho}_t\notag\\
&\quad +\int_{Z^{1}}\left(2\tilde{v}^l_{t-}\mathcal{I}^{1;l}_{t,z }\tilde{v}_{t-}+|\mathcal{I}^{1;l}_{t,z }\tilde{v}_{t-}|^2\right)q^{1}(dt,dz ),\;\;\tau_n
< t\leq \tau_{n+1}, \notag  \\
|\tilde{v}_t|^2&=0,\;\;t\leq \tau_n , \;\;l\in\{1,\ldots,d_2\},
\end{align}
where for $\phi\in C_c^{\infty}(\bR^{d_1};\bR^{d_2})$, $k\in\{1,2\}$, and $l\in\{1,\ldots,d_2\}$,
$$
\mathfrak{L}^{k;l}\phi:=\frac{1}{2}\sigma
^{k;i\varrho}\sigma
^{k;j\varrho}\partial_{ij}\phi^l+\sigma^{k;j\varrho} \partial_{j}\sigma^{k;i\varrho}\partial_i\phi^l+\sigma^{k;i\varrho}\upsilon^{k;l\bar l\varrho}\partial_i\phi^{\bar l}+\sigma
^{k;j\varrho}\partial_ja^{k;l\bar l \varrho}\phi^{\bar l}
$$
and
\begin{align*}
\mathfrak{I}^{k;l}\phi:&=\int _{D^{k}}\left(\rho^{k;l\bar l}\phi^{\bar l} (\tilde{H}^{k})-\rho^{k;l\bar l}(\tilde{H}^{k;-1})\phi^{\bar l}\right)\pi^{k}(dz ) \\
&\quad +\int _{D^{k}}\left(\phi^{ l} (\tilde{H}^{k})-\phi^{ l}+\mathbf{1}_{(1,2]}(\alpha)F^{k;i}\partial_i\phi^l\right)\pi^{k}(dz )\\
&\quad + \int_{E^{k}}\left((I_{d_2}^{l\bar l}+\rho^{k;l\bar l})\phi^{\bar l} (\tilde{H}^{k})-\phi^l\right)\pi^{k}(dz).
\end{align*}
For each $\omega$ and $t$, let 
$$
Q_t=\int_{\bR^{d^1}}|\tilde{v}_t(x)|^2 r_1^{-\lambda}(x)dx,
$$
where  $\lambda=\lambda_n+(d'+2)/2$ and $d'>d_1$. Note that 
$$
\bE Q_t\le \int_{\bR^{d^1}}r_1^{-d'}(x)dx\bE|r_1^{-\lambda_n}\tilde{v}_t|_{0}<\infty.
$$ 
It suffices to show that $\sup_{t\le T} \bE Q_t=0$. 
To this end, we will multiply the equation  \eqref{eq:FullSIDEIto} by the weight $r_1^{-2\lambda}=r_1^{-2\lambda_n+1}r_1^{-d'}$, integrate in $x$, and change the order of  the integrals in time and space. Thus, we must verify the assumptions of  stochastic Fubini theorem hold (see Corollary \ref{cor:StochasticFubini} and Remark \ref{rem:stochasticfubini} as well) with the finite measure $\mu(dx)=r_1^{-d'}(x)dx$ on $\bR^{d_1}$.  
Since  $b$ and $\sigma^k$ have linear growth an $\upsilon^k$ and $c$ are bounded, owing to Lemma \ref{lem:equivnorm}, we easily obtain that  there is a constant $N=N(d_1,d_2,N_0,\lambda_n)$ such that $\bP$-a.s for all $t$,
$$
\int_{\bR^{d_1}}\left(\sum_{k=1}^22|r_1^{-\lambda_n}\tilde{v}| |r_1^{-\lambda_n-2}\mathfrak{L}^{k}\tilde{v}| + |r_1^{\lambda_n-1}\cN^{1}\tilde{v}|^2\right)r_1^{-d'}dx\le N\sup_{t\le T}|r_1^{-\lambda_n}\tilde{v}|^2_{\beta'},
$$
$$
\int_{\bR^{d_1}}4|r_1^{-\lambda_n}\tilde{v}|^2 |r_1^{-\lambda_n-1}\cN^{1}\tilde{v}|^2r_1^{-d'}dx\le N\sup_{t\le T}|r_1^{-\lambda_n}\tilde{v}_t|^4_{\beta'},
$$
and
$$
\int_{\bR^{d_1}}\left(2|r_1^{-\lambda_n}\tilde{v}|r_1^{-\lambda_n-1}b\partial_i\tilde{v}|+2|r_1^{-\lambda_n}\tilde{v}||r_1^{-\lambda_n}c\tilde{v}|\right)r_1^{-d'}dx\le N\sup_{t\le T}|r_1^{-\lambda_n}\tilde{v}_t|^2_{\beta'}.
$$
For all $\phi\in C^{\alpha}_{loc}(\bR^{d_1};\bR^{d_2})$ and all  $k,\omega,t,x,p,$ and $z$,  
\begin{gather}
r_1^{-p}( \phi(\tilde{H}^k)-\phi+\mathbf{1}_{(1,2]}(\alpha)F^{k;i}\partial_i\phi)\\
=\bar{\phi}(\tilde{H}^k)-\bar{\phi}-\mathbf{1}_{(1,2]}(\alpha)H^{k;i}\partial_i\bar{\phi}+ \mathbf{1}_{(1,2]}(\alpha)(H^{k;i}+F^{k;i})\partial_i\bar{\phi}\notag\\
 +p\mathbf{1}_{(1,2]}(\alpha)(H^{k;i}+F^{k;i})r_1^{-2}x^i\bar{\phi} +\left(\frac{r_1^p(\tilde{H}^{k})}{r_1^p}-1\right)(\bar{\phi}(\tilde{H}^k)-\mathbf{1}_{(1,2]}(\alpha)\bar{\phi})\notag\\\label{eq:weightIntegralOperator} +\mathbf{1}_{(1,2]}(\alpha)\left(\frac{r_1^p(\tilde{H}^{k})}{r_1^p}-1+pH^{k;i}r_1^{-2}x^i\right)\bar{\phi},
\end{gather}
where $\bar \phi:= r^{-p}\phi$. By  Taylor's formula, for all $\phi\in C^{\alpha}(\bR^{d_1};\bR^{d_2})$ and all  $k,\omega,t,x,$ and $z$,  we have
\begin{equation}\label{ineq:estimateofIntegralalpha}
|\phi(\tilde{H}^k)-\phi-\mathbf{1}_{(1,2]}(\alpha)H^{k;i}\partial_i\phi|\le r_1^{\alpha} |\phi|_{\alpha}|r_1^{- 1}H|_0^{\alpha}.
\end{equation}
Combining \eqref{eq:weightIntegralOperator}, \eqref{ineq:estimateofIntegralalpha}, and the estimates given in Lemma \ref{lem:compositediffeo} (1), for all $k,\omega,t,x$ and $z$,  we obtain
$$
r_1^{-\alpha}| \rho^k(\tilde{H}^{k;-1})-\rho^k|\le N| \rho|_{\alpha\wedge 1}|r_1^{-1}H^k|_0^{\alpha\wedge 1}
$$
and
\begin{gather}
r_1^{-\lambda_n-\alpha}| \tilde{v}(\tilde{H}^k)-\tilde{v}+\mathbf{1}_{(1,2]}(\alpha)F^{k;i}\partial_i\tilde{v}|\notag \\
\label{ineq:estimateofIntegralalpha2} \le N | r_1^{-\lambda_n}\tilde{v}|_{\alpha}\left(|r_1^{-1}H^k|_0^{\alpha}+|r_1^{-1}H|_0[H^k]_1+|r_1^{-1}H|_0^{[\alpha]^-+1}+[H]_1^{[\alpha]^-+1}\right),
\end{gather}
for some constant  $N=N(d_1,\lambda_n,N_{0},\eta^{1},\eta^2 ).$
Therefore, $\bP$-a.s for all $t$,
$$
\int_{\bR^{d_1}}\left(\sum_{k=1}^22|r_1^{-\lambda_n}\tilde{v}| |r_1^{-\lambda_n-2}\mathfrak{I}^{k}\tilde{v}| +\int_{D^1\cup E^1}|r_1^{-\lambda-1}\cI_{z}\tilde{v}|^2\pi^1(dz)\right)r_1^{-d'}dx\le N\sup_{t\le T}|r_1^{-\lambda_n}\tilde{v}|^2_{\beta'},
$$
and
$$
\int_{\bR^{d_1}}\left(2|r_1^{-\lambda_n}\tilde{v}| |r_1^{-\lambda_n-2}\cI^{k}_z\tilde{v}|+|r_1^{-\lambda_n-1}\cI_{z}\tilde{v}|^2\right)^2r_1^{-d'}dx\le N\sup_{t\le T}|r_1^{-\lambda_n}\tilde{v}_t|^4_{\beta'},
$$
for some constant $N=N(d_1,d_2,\lambda_n,N_{0},\eta^{1} ,\eta^{2}).$ 

Let $L^2(\bR^{d_1};\bR^{d_2})$  be the space of square-integrable functions $f:\bR^{d_1}\rightarrow \bR^{d_2}$  with norm $\|\cdot\|_{0}$ and inner product $(\cdot,\cdot)_{0}$.  Moreover, let  $L^2(\bR^{d_1};\ell_2(\bR^{d_2}))$ be the space of  square-integrable functions $f:\bR^{d_1}\rightarrow \ell_2(\bR^{d_2})$  with norm $\|\cdot\|_{0}$.  With the help of the above estimates and Corollary \ref{cor:StochasticFubini}, denoting $\bar{v}=r^{-\lambda}\tilde{v}$, $\bP$-a.s.\ for all $t$, we have 
\begin{align}\label{eq:ItoSquareL2norm}
d\|\bar{v}_t\|_{0}^2 &=\left(2(\bar{v}^l_t,\bar{\mathfrak{L}}^{1}_t\bar{v}_t)_{0}+\|\bar{\mathcal{N}}^{1}_t\bar{v}_t\|^2_{0}+2(\bar{v}_t,\bar{\mathfrak{I}}^{1}_{t,z}\bar{v}_t)_{0}+\int_{D^{1}\cup E^{1}}\|\bar{\mathcal{I}}^{1}_{t,z }\bar{v}_{t}\|^2_{0}\pi^1(dz)\right)dt \notag  \\
&\quad +\left(2(\tilde{v}_t,b^i_t\partial_i\tilde{v}_t+\bar{c}_t^{\bar l}\tilde{v}^{\bar l}_t)_{0} +2(\tilde{v}_t,\bar{\mathfrak{L}}^{2}_t\tilde{v}_t)_{0}+2(\tilde{v}_t,\bar{\mathfrak{I}}^{2}_{t,z}\tilde{v}_t)_{0}\right)dt+2(v_t,\bar{\mathcal{N}}^{1;\varrho}_t\tilde{v}_t)_{0}dw^{1;\varrho}_t\notag\\
&\quad +\int_{Z^{1}}\left(2(\tilde{v}_{t-},\bar{\mathcal{I}}^{1}_{t,z }\tilde{v}_{t-})_{0}+\|\bar{\mathcal{I}}^{1}_{t,z }\tilde{v}_{t-}\|^2_{0}\right)q^{1}(dt,dz ),\;\;\tau_n
< t\leq \tau_{n+1},\notag   \\
\|\bar{v}_t\|^2_{0}&=0,\;\;t\leq \tau_n , \;\;l\in\{1,\ldots,d_2\},
\end{align}
where all coefficients and operators  are defined  as in \eqref{eq:FullSIDEwcorrec} with the following changes: 
\begin{tightenumerate}
\item   for each $k\in\{1,2\}$, $\upsilon^{k}$ is replaced with
$$
\bar{\upsilon}^{k;l\bar l}:=\upsilon^{k;l\bar l}+\mathbf{1}_{\{2\}}(\alpha)\lambda\sigma^{k;i\varrho}r_1^{-2}x^i\delta_{l\bar l};
$$
\item for each $k\in\{1,2\}$, $\rho^{k}$ replaced with 
$$
\bar{\rho}^{k;l\bar l}:=\rho^{k;l\bar l}+\left(\frac{r_1^{\lambda}(\tilde{H}^{k})}{r_1^{\lambda}}-1\right)(I_{d_2}^{l\bar l}+\rho^{k;l\bar l});
$$
\item  $c$ is replaced with 
\begin{gather*}
\bar{c}^{l\bar l}=c^{l\bar l}+\lambda b^ir^{-2}x^i\delta_{l\bar l}+\sum_{k=1}^2\lambda^2\sigma^{k;i\varrho}\sigma^{k;j\varrho}r_1^{-4}x^ix^j\\ +\sum_{k=1}^2\int_{D^{k}}\left(\left(\frac{r_1^{\lambda}}{r_1^{\lambda}(\tilde{H}^{k;-1})}-1\right)(I^{l\bar l}_m+\rho^{k}(\tilde{H}^{k;-1}))-\mathbf{1}_{(1,2]}(\alpha)\lambda r_1^{-2}x_iH^{k;i}(\tilde{H}^{k;-1})\right)\pi^k(dz).
\end{gather*}
\end{tightenumerate}
Since for all $k,\omega$ and $t$, $|r_1^{-1}\sigma^k|_{0}+|r_1^{-1}\nabla \sigma^k|_{\bar{\beta}-1}+|\upsilon^k|_{\tilde{\beta}}\le N_0,$ for $\bar{\beta}>1\vee \alpha$ and $\tilde{\beta}>\alpha$, it is clear that
$
|\bar \upsilon^k|_{\alpha}\le N.
$
Moreover, since for all  $k,\omega$ and $t$, $|r_1^{-1} H^k|_0+|H^k|_{\bar \beta}\le K^k$ and $|\rho|_{\tilde{\beta}'}\le l^k$, applying the estimates in Lemma \eqref{lem:compositediffeo} (1), we get
$$
|\bar \rho^k|_{\alpha}\le l^k+K^k(1+l^k) \quad \textrm{and} \quad |c|_{\alpha}\le N_0.
$$
We will now  estimate the drift terms of \eqref{eq:ItoSquareL2norm} in terms of $\|\bar v_t\|_{0}^2$. 
We write $f\sim g$ if  $\int_{\bR^{d_1}}|f(x)|dx $ $=\int_{\mathbf{{R}^{d_1}}}|g(x)|dx$ and $f \ll g$ if$\int_{\bR^{d_1}}|f(x)|dx \le  \int_{\mathbf{{R}^{d_1}}}|g(x)|dx$. 
Using the divergence theorem, for any $v:\bR^{d_1}\rightarrow\bR^{d_2}$, $\sigma :\bR^{d_1}\rightarrow\bR^{d_1}$ and $\upsilon:\bR^{d_1}\rightarrow\mathbf{R}^{2d_2}$ and all $x$,   we get 
$$
\sigma^i\sigma^jv^lv^l_{ij}\sim \frac{1}{2}(\sigma^i\sigma^j)_{ij}v-\sigma^i\sigma^jv^l_iv^l_j =  (\sigma^i_{ij}\sigma^j+\sigma^i_j\sigma ^j_i)|v|^2-\sigma^i\sigma^jv^l_iv^l_j ,
$$
$$
2\sigma^i_j\sigma ^jv^lv^l_i\sim- (\sigma^i_j\sigma^j)_i|v|^2=(\sigma^i_{ij}\sigma^j+\sigma^i_j\sigma^{j}_i)|v|^2, 
$$
and
$$ \quad \sigma^iv^l\upsilon^{l\bar l}v_i^{\bar l}+\sigma^iv^{\bar l}\upsilon^{l\bar l}v_i^{ l}=\sigma^iv^l\upsilon^{l\bar l}_{sym}v_i^{\bar l}\sim -(\sigma^i\upsilon^{l\bar l}_{sym})_i|v|^2=-(\sigma^i_i\upsilon_{sym}^{l\bar l}+\sigma^i\upsilon_{sym}^{l\bar l})|v|^2,
$$
where $\upsilon_{sym}^{l\bar l}=(\upsilon^{l\bar l}+\upsilon^{\bar l l})/2$. Consequently,  for all $\omega,t,$ and $x$, we have 
\begin{align*}
2\bar{v}^{l}\bar{\mathfrak{L}}^{1;l}\bar{v}+|\bar{\mathcal{N}}^{1}\bar{v}|^2& \sim \frac{1}{2}\left(|\operatorname{div} \sigma^1|^2-\partial_i\sigma^{1;j\varrho}\partial_j\sigma^{1;i\varrho}\right)|\bar{v}|^2-\bar{\upsilon}^{1;l\bar l\varrho}_{sym}\bar{v}^l\bar{v}^{\bar l}\operatorname{div}\sigma^{1;\varrho}+|\bar{\upsilon}^{1}\bar{v}|^2\ll N |\bar{v}|^2
\end{align*}
and
$$
2\bar{v}^{l}\bar{\mathfrak{L}}^{(2);l}\bar{v}\ll -(1+\epsilon )|\sigma^{2;i}\partial_i\bar v|^2+N|\bar v|^2,
$$
for any $\epsilon>0$, where in the last estimate we have also used Young's inequality.  By Lemma \ref{lem:compositediffeo} (2) and basic properties of the determinant, there is a constant $N=N(d,N_0,\eta^{1} ,\eta^{2})$ such that for all $k,\omega,t,x,$ and $z$, 
$$
\det \tilde{H}^{k;-1}-1=\det(I_d+ F^{k})-1\le |\nabla F^k|\le N |\nabla H^k|$$
and
$$
 \det \tilde{H}^{k;-1}-1-\operatorname{div}F^k\le |\nabla F^k|^2\le N|\nabla H^k|^2.
$$
Thus, integrating by parts, 
 for all $\omega,t,$ and $x$, we get 
\begin{gather*}
2\bar{v}^{l}\bar{\mathfrak{I}}^{1;l}\bar{v}+\int_{D^1\cup E^1}|\bar{\cI}^{1}\bar{v}|^2\pi
^{1}(dz)\sim  2\int_{D^{1}}\bar{\rho}^{1;l\bar l}_{sym}(\tilde{H}^{1;-1})(\det\nabla\tilde{H}^{1;-1}-1)\pi^{1}(dz)\bar{v}^{\bar l}\bar{v}^{ l}\\
+\int_{D^{1}\cup E^{1}}\left(\det\nabla\tilde{H}^{1;-1}-1+\mathbf{1}_{(1,2]}(\alpha)\mathbf{1}_{D^{1}}\operatorname{div}F^{1}\right)\pi^{1}(dz)|\bar{v}|^2\\
+\int_{D^{1}\cup E^{1}}\left(\mathbf{1}_{E^{1}}2\bar{\rho}^{1;l\bar l}_{sym}(\tilde{H}^{1;-1})\bar v^{\bar l}\bar{v} ^l+|\bar{\rho}^{1}(\tilde{H}^{1;-1})\bar v|^2 \right)\det\nabla\tilde{H}^{1;-1}\pi^{1}(dz)\\
\ll N\left(\int_{D^{1}} \left(K^{1}(z)^2+l^{1}(z)K^{1}(z)+l^{1}(z)^2\right)\pi^{1}(dz)+\int_{E^{1}}\left(K^k(z)+l^{k}(z)\right)\pi^{1}(dz)\right)|\bar v|^2.
\end{gather*}
Analogously,  for all $\omega,t,$ and $x$, we obtain
$$
2\bar{v}^{l}\bar{\mathfrak{I}}^{2;l}\bar{v}\le -(1+\epsilon)\int_{D^2\cup E^2}|\bar{v}(\tilde{H}^2)-\bar{v}|^2\pi^2(dz)+N |\bar v|^2.
$$
Therefore, combining the above estimates, $\bP$-a.s.\ for all $t,$
\begin{equation}\label{ineq:estimateofsquarenorm}
Q_t\le N\int_0^t Q_sds+M_t,
\end{equation}
where $(M_t)_{t\le T}$ is a c\`{a}dl\`{a}g  square-integrable martingale. 
Taking the expectation of \eqref{ineq:estimateofsquarenorm} and applying Gronwall's lemma, we get
$
\sup_{t\le T}\bE Q_t=0,
$
which implies that $\bP$-a.s.\ for all $t$ and $x$, $\tilde{v}_t(x)=0$.  This completes the proof. 
\end{proof}

\subsection{Small jump case}
Set $(w^{\varrho})_{\varrho\ge 1}=(w^{1;\varrho})_{\varrho\ge 1}$, $(Z, \cZ,\pi )=(\cZ^{1},\cZ^{1},\pi ^{1}),$ $%
p(dt,dz )=p^{1}(dt,dz )$, and $q(dt,dz )=q^{1}(dt,dz )$.  Let $%
\sigma_t (x)=( \sigma
^{i\varrho }_t(x)) _{1\leq i\le d_1,\varrho \geq 1}$ be a $\ell_2(\bR^{d_1})$-valued $\mathcal{R}%
_T\otimes \mathcal{B}(\bR^{d_1})$-measurable function defined on $%
\Omega \times [ 0,T]\times \bR^{d_1}$ and $%
H_t(x,z)=(H^{i}_t(x,z))_{1\leq i\le d_1}$ be a $\mathcal{P}_{T}\otimes 
\mathcal{B}(\bR^{d_1})\otimes \mathcal{Z}$-measurable function defined
on $\Omega \times [ 0,T]\times \bR^{d_1}\times Z. $ 

We  introduce the following assumption for $\beta >1\vee \alpha $.

\begin{assumption}[$\beta$]\label{asm:smalljump}
\begin{tightenumerate}
\item There is a constant $N_0>0$ such that for all $(\omega,t)\in \Omega\times[0,T]$,
$$
|r_1^{-1}b_t|_0+|r_1^{-1}\sigma_t|_0+
|\nabla b_t|_{\beta -1}+| \nabla \sigma_t|_{\beta -1}\leq N_{0}.
$$
Moreover, for all $(\omega,t,z)\in \Omega\times [0,T]\times Z$,
$$
|r_{1}^{-1}H_t(z)|_{0}\le K_t(z) \quad \textrm{and} \quad  |\nabla
H_t(z)| _{\beta -1} \le \bar K_t(z),
$$
where $K: \Omega \times[ 0,T]\times Z\rightarrow \mathbf{R}_+$ is a $\mathcal{P}_{T}\otimes \mathcal{Z}$-measurable function satisfying
\begin{equation*}
K_t(z)+\bar K_t(z)+\int_{Z}\left(K_t(z)^{\alpha }+\bar K_t(z)^2\right)\pi
(dz)\leq
N_{0},
\end{equation*}
for all $(\omega,t,z)\in \Omega\times[0,T]\times Z$.
\item There is a constant $\eta\in (0,1)$ such that for all $(\omega ,t,x,z)\in \{(\omega ,t,x,z)\in \Omega \times
[ 0,T]\times\bR^{d_1}\times Z:|\nabla H_t (\omega,x,z)|>\eta \},$ 
\begin{equation*}
| \left(I_{d_1}+\nabla
H_t(x,z)\right)^{-1}|\leq N_{0}.
\end{equation*}
\end{tightenumerate}
\end{assumption}
Let Assumption \ref{asm:smalljump}$(\beta)$ hold for
some $\beta >1\vee \alpha$.  Let  $\tau\le T$ be a stopping time. Consider the
system of SIDEs on $[0,T]\times\bR^{d_1}$  given by
\begin{align}\label{eq:SmallJumpSIDE} 
dv_t(x) &=\left(\mathbf{1}%
_{\{2\}}(\alpha)\frac{1}{2}\sigma ^{i\varrho}_t(x) \sigma
^{j\varrho}_t(x)\partial_{ij}v_t(x)+\mathfrak{b}^{i}_t(x)\partial_iv_t(x)\right)dt   +\mathbf{1}_{\{2\}}(\alpha)\sigma ^{i\varrho}_t(x)\partial_iv_{t}(x)dw^{\varrho}_{t}  \notag \\
&+1_{(1,2]}(\alpha)\int_{Z}\left(v_t(x+H_t(x,z ))-v_t(x)+F_t(x,z)\partial_iv_{t}(x)\right)\pi (dz )dt  \notag \\
&+\int_{Z}\left(v_{t-}(x+H_t(x,z ))-v_{t-}(x)\right)[1_{(1,2]}(\alpha)q(dt,dz)+1_{[0,1]}(\alpha)p(dt,dz)]\notag,\;\; \tau < t\leq T,\\
v_t(x)&=x,\;\;t\leq \tau ,
\end{align}%
where 
$$
\mathfrak{b}^i_t(x):=\mathbf{1}_{[1,2]}(\alpha)b^i_t(x)+\mathbf{1}_{\{2\}}(\alpha)\sigma_{t}^{j\varrho}(x) \partial_{j}\sigma^{i\varrho}_{t}(x) 
$$
and
$$
F_t(x,z):=-H_t(\tilde{H}^{-1}_t(x,z),z).
$$
We associate with \eqref{eq:SmallJumpSIDE},  the stochastic flow  $Y_t=Y_t(\tau,x)$,
 $(t,x) \in [0,T]\times \bR^{d_1}$, generated by the SDE
\begin{align}\label{eq:SmallJumpSDE}
dY_{t} &=-\mathbf{1}_{[1,2]}(\alpha )b_t(Y_t)dt-\mathbf{1}_{\{2\}}(\alpha )\sigma^{\varrho}_t
(Y_{t})dw^{\varrho}_{t}\notag\\
&\quad +\int_{Z}F_t(Y_{t-},z) [1_{(1,2]}(z)q(dt,dz)+1_{[0,1]}(z)p(dt,dz)],\;\;\tau<t\le  T,
 \\
Y_{t} &=x,\;\;t\leq \tau.\notag
\end{align}%
Owing to parts (1) and (2) of Lemma \ref{lem:compositediffeo},  for each $\omega ,t,$ and $z,$ the inverse of the mapping $%
\tilde{F}_t(x,z):=x+F_t(x,z)=x-H_t(\tilde{H}^{-1}_t(x,z),z)$ is $\tilde{H}_t
(x,z):=x+H_t(x,z)$ and  there is a constant $N=N(d_1,N_0,\beta,\eta)$
such that for all $\omega,t,x,y,$ and $z$, 
$$
|r_1^{-1}F_t(z)|_0\le NK_t(z),\quad  |\nabla F_t(z)|_{\beta -1}\leq K_t(z),
\quad 
|(I_{d_1}+\nabla F_t(x,z))^{-1}|\leq N.
$$
Thus,  by Theorem 2.1 in \cite{LeMi14b}, there is a modification of the  solution of  \eqref{eq:SmallJumpSDE}, which we still denote by $Y_t=Y_{t}(\tau ,x)$, that is a $\cC_{loc}^{\beta'}$%
-diffeomorphism for any $\beta'\in [1,\beta)$. Moreover,  $\bP$-a.s.\ $Y_{\cdot}(\tau,\cdot),Y^{-1}_{\cdot}(\tau,\cdot)\in D([0,T];\cC_{loc}^{\beta'}(\bR^{d_1};\bR^{d_1}))$, and  $Y_{t-}^{-1}(\tau,\cdot )$ coincides
with the inverse of $Y_{t-}(\tau,\cdot)$ for all $t$.  The following proposition shows that 
 the inverse flow $Y_{t}^{-1}(\tau )$ solves \eqref{eq:SmallJumpSIDE}.

\begin{proposition}\label{Prop:SmallJump}
Let Assumption \ref{asm:smalljump}$(\beta)$ hold for
some $\beta >1\vee \alpha$.  For each stopping time $\tau\le T$ and $\beta'\in [1\vee \alpha,\beta)$, $v_t(x)=v_t(\tau,x)=Y_t^{-1}(\tau,x)$ solves \eqref{eq:SmallJumpSIDE} and for each $\epsilon >0$ and $p\ge 2$, there is a constant $N=N(d_1,p,N_{0},T,\beta ',\eta ,\epsilon)$ such that
\begin{equation}\label{ineq:SmallJumpEst}
\mathbf{E}\left[\sup_{t\leq T}|r_{1}^{-(1+\epsilon )}v_{t}(\tau )|_{0}^{p}\right]+\mathbf{E}\left[\sup_{t\leq T}|r_{1}^{-\epsilon }\nabla v_{t}(\tau )|_{\beta'-1}^{p}\right]\leq N.
\end{equation}
\end{proposition}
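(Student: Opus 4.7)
The plan is to leverage the known result for continuous SPDEs (Theorem 2.4 in \cite{LeMi14b}) together with an interlacing procedure that handles the jump component, then pass to the limit via the strong limit theorem (Theorem 2.3 in \cite{LeMi14b}). The algebraic input is the inversion identity noted just before the statement: $\tilde{F}_t(\cdot,z) = \tilde{H}_t^{-1}(\cdot,z)$, so a spatial shift of $Y_t$ by $F_t(Y_{t-},z)$ corresponds to a shift of the inverse flow by $H_t(\cdot,z)$.

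First, I would truncate the jump part of \eqref{eq:SmallJumpSDE}. For each $n\ge 1$, restrict the Poisson measure to $\{z : K_t(z) + \bar K_t(z) \ge 1/n\}$, which has finite $\pi$-mass and produces only finitely many jump times $0 < \tau_1^{(n)} < \tau_2^{(n)} < \cdots$ on $[0,T]$. On each interval $(\tau_k^{(n)}, \tau_{k+1}^{(n)})$ the truncated flow $Y^{(n)}_t$ evolves as a continuous SDE in $\sigma$ and $b$ (plus the compensator integral, which is absorbed into the drift when $\alpha\in(1,2]$), and Theorem 2.4 of \cite{LeMi14b} says that the inverse $v^{(n)}_t := (Y^{(n)}_t)^{-1}$ solves the continuous part of \eqref{eq:SmallJumpSIDE} on each such interval, with the drift correction $\mathfrak{b}^i_t = \mathbf{1}_{[1,2]}(\alpha)b^i_t + \mathbf{1}_{\{2\}}(\alpha)\sigma^{j\varrho}_t\partial_j\sigma^{i\varrho}_t$ arising exactly as in the continuous theorem.

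Next, at each jump time $\tau_k^{(n)}$ with Poisson mark $z$, the flow jumps by $Y^{(n)}_{\tau_k^{(n)}} = \tilde F_{\tau_k^{(n)}}(Y^{(n)}_{\tau_k^{(n)}-}, z)$; composing with the inverse and using $\tilde F^{-1} = \tilde H$, one finds
\begin{equation*}
v^{(n)}_{\tau_k^{(n)}}(x) \;=\; v^{(n)}_{\tau_k^{(n)}-}\bigl(\tilde H_{\tau_k^{(n)}}(x,z)\bigr) \;=\; v^{(n)}_{\tau_k^{(n)}-}(x + H_{\tau_k^{(n)}}(x,z)),
\end{equation*}
so that the jump $v^{(n)}_{\tau_k^{(n)}}(x) - v^{(n)}_{\tau_k^{(n)}-}(x)$ is exactly the integrand appearing in the $p(dt,dz)$ (or $q(dt,dz)$) term of \eqref{eq:SmallJumpSIDE}. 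Assembling the continuous pieces with the jump corrections by the interlacing procedure shows that $v^{(n)}_t$ solves the truncated SIDE, and the extra drift $F_t \partial_i v_t$ integrated against $\pi$ on the kept set is precisely the compensator that must appear when $\alpha\in(1,2]$. Now I would send $n\to\infty$: uniform bounds from Assumption \ref{asm:smalljump} and Lemma \ref{lem:compositediffeo} give uniform control of $v^{(n)}$ in $D([0,T];\cC_{loc}^{\beta'})$, and Theorem 2.3 of \cite{LeMi14b} delivers the required strong convergence $v^{(n)}\to v$, transferring the SIDE to the limit.

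For the estimate \eqref{ineq:SmallJumpEst}, I would work directly with the SDE \eqref{eq:SmallJumpSDE} for $Y_t$ and its spatial gradient. The linear-growth bounds $|r_1^{-1}b_t|_0 + |r_1^{-1}\sigma_t|_0 \le N_0$ and $|r_1^{-1}H_t(z)|_0 \le K_t(z)$, together with the integrability of $K^\alpha$ and $\bar K^2$ against $\pi$, allow Burkholder--Davis--Gundy and Gronwall to yield moments of $r_1^{-(1+\epsilon)}Y_t$ and hence of $r_1^{-(1+\epsilon)}v_t$ via the change-of-variable $x = Y_t(v_t(x))$. The derivative bound follows by differentiating \eqref{eq:SmallJumpSDE} and iterating Gronwall in $\cC^{\beta'-1}$ using the $\nabla H$ bounds in $\bar K$; Lemma \ref{lem:compositediffeo} converts estimates on $\nabla Y$ into estimates on $\nabla v = (\nabla Y)^{-1}\circ v$. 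The main obstacle is the uniformity of the limit in the weighted H\"older topology and the compensator bookkeeping near $\alpha = 1$, where the small-jump integrability is borderline and one must carefully distribute the jump between the $q$ and $p$ integrals according to the $\mathbf{1}_{(1,2]}(\alpha)$ and $\mathbf{1}_{[0,1]}(\alpha)$ splitting in \eqref{eq:SmallJumpSIDE}.
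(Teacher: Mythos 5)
Your proposal follows essentially the same route as the paper's proof: truncate to the finitely many jumps on which $K_t(z)$ is bounded below, apply the continuous representation theorem (Theorem 2.4 of \cite{LeMi14b}) between jump times, interlace at the jump times via the inversion identity $\tilde F_t^{-1}=\tilde H_t$ so that the jump of the inverse flow is $v_{t-}(x+H_t(x,z))-v_{t-}(x)$, and pass to the limit with the strong convergence theorem (Theorem 2.3 of \cite{LeMi14b}). The only difference is cosmetic: for the moment estimate \eqref{ineq:SmallJumpEst} the paper simply cites Theorem 2.1 of \cite{LeMi14b} rather than re-deriving it by Burkholder--Davis--Gundy and Gronwall as you propose.
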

\begin{proof}
The estimate  \eqref{ineq:SmallJumpEst} is given in Theorem 2.1 in  \cite{LeMi14b} (see also Remark 2.1), so we 
only need to show that $Y^{-1}_t(\tau,x)$ solves \eqref{eq:SmallJumpSIDE}.  Let $(\delta _{n})_{n\geq 1}$ be a
sequence such that $\delta _{n}\in (0,\eta )$ for all $n$ and $\delta
_{n}\rightarrow 0$ as $n\rightarrow \infty $. It is clear that there is a constant $N=N(N_0)$ such that for all $\omega$ and $t$, 
\begin{equation}\label{ineq:SmallJumpFiniteofLevyMeasure}
\pi (\{z:K_t(z)>\delta _{n}\})\leq \frac{N}{\delta_n ^{\alpha }}.
\end{equation}
 For each $n$, consider the
system of SIDEs on $[0,T]\times\bR^{d_1}$ given by 
\begin{gather}
dv^{(n)}_t(x) =\left(\mathbf{1}_{\{2\}}(\alpha )\frac{1}{2}\sigma ^{i\varrho}_t(x) \sigma
^{j\varrho}_t(x)\partial _{ij}v^{(n)}_t(x)+\mathfrak{b}^i_t(x)\partial _{i}v^{(n)}_t(x)\right)dt \notag \\
+1_{(1,2]}(\alpha)\int_{Z}\mathbf{1}_{\{K_t>\delta
_{n}\}}(z)\left(v^{(n)}_t(x+H_t(x,z))-v^{(n)}_t(x)+F^{i}_t(x,z)\partial _{i}v^{(n)}_t(x)\right)\pi (dz)dt  \notag \\
 +\int_Z \mathbf{1}_{\{K_t>\delta
_{n}%
\}}(z)\left(v^{(n)}_{t-}(x+H_t(x,z))-v^{(n)}_{t-}(x)\right)[1_{(1,2]}(\alpha)q(dt,dz)+1_{[0,1]}(\alpha)p(dt,dz)],
\notag \\+\mathbf{1}_{\{2\}}(\alpha )\sigma_t ^{i\varrho}(x)\partial
_{i}v^{(n)}_t(x)dw_{t}^{\varrho},  \;\;\tau<t\le  T,\;\;
v^{(n)}_t(x) =x, \;\; t\le \tau,\label{eq:SmallJumpSIDEn1} 
\end{gather}%
and the stochastic flow  $Y^{(n)}_t=Y^{(n)}_t(\tau,x)$,
 $(t,x) \in [0,T]\times \bR^{d_1}$, generated by the SDE
\begin{align} \label{eq:SmallJumpSDEn1} 
dY^{(n)}_t &=-\mathbf{1}_{[1,2]}(\alpha )b_t(Y^{(n)}_t)dt-\mathbf{1}_{\{2\}}(\alpha )\sigma_t^{\varrho}
(Y^{(n)}_t)dw_{t}^{\varrho} \notag\\
&\quad +\int_Z \mathbf{1}_{\{K_t>\delta
_{n}\}}(z)F_t(Y^{(n)}_{t-},z)[1_{(1,2]}(\alpha)q(dt,dz)+1_{[0,1]}(\alpha)p(dt,dz)],\;\;\tau<t\le T, 
\notag \\
Y^{(n)}_t(x) &=x, \;\; t\le \tau. 
\end{align}%
Since \eqref{ineq:SmallJumpFiniteofLevyMeasure} holds, we can rewrite equation \eqref{eq:SmallJumpSDEn1} as 
\begin{align}\label{eq:SmallJumpSDEn2} 
dY^{(n)}_t &=-\left(\mathbf{1}_{[1,2]}(\alpha )b_t(Y^{(n)}_t)+1_{(1,2]}(\alpha)\int_{Z}\mathbf{1}_{\{K_t>
\delta _{n}\}}(z)F_t(Y_t^{(n)},z)\pi (dz)\right)dt\\
&\quad -\mathbf{1}_{\{2\}}(\alpha
)\sigma^{\varrho}_t(Y_{n}(t))dw^{\varrho}_{t} +\int_{Z}\mathbf{1}_{\{K_t>\delta
_{n}\}}(z)F_t(Y^{(n)}_{t-},z)p(dt,dz),\;\;\tau<t\le T,\notag  
\end{align}
and  \eqref{eq:SmallJumpSIDEn1} as 
\begin{align}\label{eq:SmallJumpSIDEn2} 
dv^{(n)}_t(x)& =\left(\mathbf{1}_{\{2\}}(\alpha )\frac{1}{2}\sigma ^{i\varrho}_t(x) \sigma
^{j\varrho}_t(x)\partial _{ij}v^{(n)}_t(x)+\mathfrak{b}^i_t(x)\partial_{j}\sigma^{i\varrho}_{t}(x) \right)dt\notag\\
& \quad +\mathbf{1}_{\{2\}}(\alpha )\sigma_t ^{i\varrho}(x)\partial
_{i}v^{(n)}_t(x)dw_{t}^{\varrho}+1_{(1,2]}(\alpha)\int_{Z}\mathbf{1}_{\{K_t>
\delta _{n}\}}(z)F^{i}_t(x,z)\pi (dz)\partial _{i}v_{t}^{(n)}(x)dt \notag \\
& \quad +\int_{Z}\mathbf{1}_{\{K_t>\delta
_{n}\}}(z)\left(v_{t-}^{(n)}(x+H_t(x,z))-v_{t-}^{(n)}(x)\right)p(dt,dz),\;\;\tau<t\le T.
\end{align}%
We claim that the  solution $Y^{(n)}_t=Y^{(n)}_t(x)$ of \eqref{eq:SmallJumpSDEn2} can be written as the
solution of continuous SDEs with a finite number of jumps interlaced.
Indeed, for each $n$ and stopping time $\tau'\le T$, consider the stochastic flow  $\tilde{Y}^{(n)}_t=\tilde{Y}^{(n)}_t(\tau',x)$,
 $(t,x) \in [0,T]\times \bR^{d_1}$, generated by the SDE
\begin{align*}
d\tilde{Y}^{(n)}_t &=-[\mathbf{1}_{[1,2]}(\alpha )b_t(\tilde{Y}^{(n)}_t)+1_{(1,2]}(\alpha)\int_{Z}\mathbf{1}_{\{K>
\delta _{n}\}}(t,z)F_t(\tilde{Y}_t^{(n)},z)\pi (dz)]dt\notag\\
&\quad -\mathbf{1}_{\{2\}}(\alpha
)\sigma_t^{\varrho} (\tilde{Y}^{(n)}_t)dw_{t}^{\varrho},\;\tau'<t\le  T,\notag \\
\tilde{Y}^{(n)}_t &=x,\;\;t\leq \tau'.
\end{align*}
By Theorems 2.1 and 2.4 and Remark 2.2 in \cite{LeMi14b}, there is a modification of $\tilde{Y}^{(n)}_t$ $=$   $\tilde{Y}^{(n)}_t$ $(\tau',x),$ still denoted $\tilde{Y}_{t}^{(n)}(\tau' ,x)$, that is a $\cC_{loc}^{\beta'}$%
-diffeomorphism. Furthermore, $\bP$-a.s.\   we have that $$\tilde{Y}_{\cdot}^{(n)}(\tau' ,\cdot),\tilde{Y}^{(n);-1}_{\cdot}(\tau',\cdot)\in C([0,T];\cC_{loc}^{\beta'})$$ and   $\tilde{v}^{(n)}_t=\tilde{v}^{(n)}_t(\tau',x)=\tilde{Y}^{(n);-1}_t(\tau',x)$ solves the SPDE given by
\begin{align}\label{eq:SmallJumpCtsSPDE}
d\tilde{v}^{(n)}_t(x)& =\left(\mathbf{1}_{\{2\}}(\alpha )\frac{1}{2}\sigma ^{i\varrho}_t(x) \sigma
^{j\varrho}_t(x)\partial _{ij}v^{(n)}_t(x)+\mathfrak{b}^i_t(x)\partial _{i}v^{(n)}_t(x)\right)dt\\
&\quad +\mathbf{1}_{\{2\}}(\alpha )\sigma_t ^{i\varrho}(x)\partial
_{i}v^{(n)}_t(x)dw_{t}^{\varrho} \notag\\
& \quad +1_{(1,2]}(\alpha)\int_{Z}\mathbf{1}_{\{K> \delta
_{n}\}}(t,z)F^{i}(t,z)\pi
(dz)dt\partial _{i}v^{(n)}_t(x),\;\;\tau'<t\le  T,  \notag \\
\tilde{v}^{(n)}_t(x)& =x,\;\;t\leq \tau'.  
\end{align}%
For each $n$, let 
\begin{equation*}
A^{(n)}_t=\int_{]0,t]}\int_{Z}\mathbf{1}_{\{K_s>\delta
_{n}\}}(z)p(ds,dz),\;\;t\geq 0,
\end{equation*}%
and define the sequence of stopping times $(\tau _{l}^{(n)})_{l=1}^{\infty}$ recursively by $\tau _{0}^{(n)}=\tau$ and 
\begin{equation*}
\tau _{l+1}^{(n)}=\inf \left\{t>\tau _{l}^{(n)}:\Delta A^{(n)}_t\neq 0\right\}
\wedge T.
\end{equation*}%
Fix some $n\geq 1$. It is clear that  $\bP$-a.s.\ for all $x$ and $t\in [0,\tau _{1}^{(n)})$, $$Y^{(n);-1}_t(\tau,x)=\tilde{Y}^{(n);-1}_t(\tau,x)=\tilde{v}^{(n)}_t(\tau,x)$$
satisfies \eqref{eq:SmallJumpSIDEn2} up to, but  not including time $\tau _{1}^{(n)}$.   Moreover, $\bP$-a.s.\ for all $x$,
$$
Y^{(n)}_{\tau _{1}^{(n)}}(\tau,x) =\tilde{Y}^{(n)}_{\tau _{1}^{n}-}(\tau,x)+\int_{Z}F_{\tau
_{1}^{(n)}}(\tilde{Y}^{(n)}_{\tau _{1}^{(n)}-}(\tau,x),z)p(\{\tau _{1}^{(n)}\},dz),
$$
and hence 
\begin{equation*}
Y^{(n);-1}_{\tau _{1}^{(n)}}(\tau,x)=\int_{Z}\tilde{v}^{(n)}_{\tau _{1}^{(n)}-}(\tau,x+H_{\tau
_{1}^{(n)}}(x,z))p(\{\tau _{1}^{(n)}\},dz).
\end{equation*}%
Consequently, $v^{(n)}_t(\tau,x)=Y^{(n);-1}_t(\tau,x)$ solves \eqref{eq:SmallJumpSIDEn2} up to and including  time $\tau _{1}^{(n)}$. Assume that for some $l\geq 1$, $v^{(n)}_t(\tau,x)=
Y^{(n);-1}_t(\tau,x)$ solves \eqref{eq:SmallJumpSIDEn2} up to and including time $\tau _{l}^{(n)}$.
Clearly, $\bP$-a.s.\ for all $x$ and $t\in[\tau _{l}^{(n)},\tau _{l+1}^{(n)})$, $Y^{(n)}_t(x)=\tilde{Y}^{(n)}_t(\tau _{l}^{(n)},Y^{(n)}_{\tau _{l}^{(n)}-}(x))$,
and thus $\bP$-a.s.\ for all $x$ and $t\in[\tau _{l}^{(n)},\tau _{l+1}^{(n)})$,
$$Y^{(n);-1}_t(x)=\tilde{Y}^{(n)}_t(\tau _{l}^{(n)},Y^{(n)}_{\tau _{l}^{(n)}-}(x))=\tilde{v}^{(n)}_t(\tau _{l}^{(n)},Y^{(n)}_{\tau _{l}^{(n)}-}(x)).$$
Moreover,  $\bP$-a.s.\ for all $x$,
\begin{equation*}
Y_{n}^{-1}(\tau _{l+1}^{n},x)=\int_{U}\tilde{v}_{n}(\tau _{l}^{n},\tau
_{l+1}^{n}-,x+H(\tau _{l+1}^{n},x,z))p(\{\tau _{l+1}^{n}\},dz),
\end{equation*}%
which implies that $v^{(n)}_t(\tau,x)=Y^{(n);-1}_t(\tau,x)$ solves \eqref{eq:SmallJumpSIDEn2} up to and including
time $\tau _{l+1}^{n}$. Therefore, by induction, for each $n$, $%
v^{(n)}_t(\tau,x)=Y^{(n);-1}_t(\tau,x)$ solves \eqref{eq:SmallJumpSIDEn2}.  It is easy to see that for all $\omega,t,$ and $z$, 
$$
|r_1^{-1}\mathbf{1}_{\{K_t>\delta
_{n}\}}(z)F_t(z)-r_1^{-1}F_t(z)|_{0}+| \mathbf{1}_{\{K_t>\delta
_{n}\}}(z)\nabla F_t(z)-\nabla F_t(z)|_{\beta-1}\le \mathbf{1}_{\{K_t\le \delta
_{n}\}}(z)K_t(z)
$$
and thus
$$
d\bP dt-\lim_{n\rightarrow\infty} \int_D \mathbf{1}_{\{K\le \delta
_{n}\}}(t,z)K_t(z)^2 \pi(dz)+d\bP dt-\lim_{n\rightarrow\infty} \int_E \mathbf{1}_{\{K\le \delta
_{n}\}}(t,z)K_t(z) \pi(dz)=0.
$$
By virtue of Theorem 2.3 in \cite{LeMi14b}, for each $\epsilon>0,$ and $p\ge2,$  we have
\begin{gather*}
\lim_{n\rightarrow \infty }\left(\mathbf{E}\left[\sup_{t\leq T} | r_1^{-(1+\epsilon)}(Y^{(n)}_t(\tau )-r_1^{-(1+\epsilon)}Y_t(\tau)|_{0}^{p}\right]+\mathbf{E}\left[\sup_{t\leq T} |
r_1^{-\epsilon}\nabla Y^{(n)}_t (\tau) -r_1^{-\epsilon}\nabla Y_{t}(\tau) |
_{\beta'-1}^{p}\right]\right)=0,\\
\lim_{n\rightarrow \infty }\mathbf{E}\left[\sup_{t\leq T} | r_1^{-(1+\epsilon)}Y^{(n);-1}_t(\tau )-r_1^{-(1+\epsilon)}Y^{-1}_t(\tau)|_{0}^{p}\right]=0
\end{gather*}
and
$$
\lim_{n\rightarrow \infty }\mathbf{E}\left[\sup_{t\leq T} |
r_1^{-\epsilon}\nabla Y^{(n);-1}_t (\tau) -r_1^{-\epsilon}\nabla Y^{-1}_{t}(\tau ) |
_{\beta'-1}^{p}\right]=0.
$$
Then passing to the limit in both sides of \eqref{eq:SmallJumpSIDEn1} and making use of Assumption \ref{asm:smalljump}$(\beta)$, the estimate \eqref{ineq:estimateofIntegralalpha2},  and basic convergence properties of stochastic integrals, we find that $v_t(\tau,x)=X^{-1}_t(\tau,x)$ solves \eqref{eq:SmallJumpSIDE} .
\end{proof}

\subsection{Adding free and zero-order terms}

Set $(w^{\varrho})_{\varrho\ge 1}=(w^{1;\varrho})_{\varrho\ge 1}$, $(Z, \cZ,\pi )=(\cZ^{1},\cZ^{1},\pi ^{1}),$ $%
p(dt,dz )=p^{1}(dt,dz )$, and $q(dt,dz )$ $=p^{1}(dt,dz )-\pi^1(dz)dt$.  Also, set $D=D^{1}$, $E=E^{1},$ and assume $Z=D\cup E$. Let $\upsilon_t(x)=(\upsilon_t^{l\bar l\varrho}(\omega,x))_{1\le l,\bar l\le d_2,\;\varrho\ge 1}$ be a $\ell_2(\mathbf{R}^{2d_2})$-valued $\mathcal{R}%
_T\otimes \mathcal{B}(\bR^{d_1})$-measurable function defined on $%
\Omega \times [ 0,T]\times \bR^{d_1}$ and $%
\rho_t(x,z)=(\rho^{l \bar l}_t(\omega,x,z))_{1\leq l,\bar l\le d_2}$ be a $\mathcal{P}_{T}\otimes 
\mathcal{B}(\bR^{d_1})\otimes \mathcal{Z}$-measurable function defined
on $\Omega \times [ 0,T]\times \bR^{d_1}\times Z. $ 

We introduce the following assumptions for $\beta >1\vee \alpha $ and $\tilde{\beta}%
>\alpha $.

\begin{assumption}[$\beta$]\label{asm:smalljumpinfree}
\begin{tightenumerate}
\item There is a constant $N_0>0$ such that for all $(\omega,t)\in \Omega\times[0,T]$,
$$
|r_1^{-1}b_t|_0+|r_1^{-1}\sigma_t|_0+
|\nabla b_t|_{\beta -1}+| \nabla \sigma_t|_{\beta -1}\leq N_{0}.
$$
Moreover, for all $(\omega,t,z)\in \Omega\times [0,T]\times Z$,
$$
|r_{1}^{-1}H_t(z)|_{0}\le K_t(z) \quad \textrm{and} \quad  |\nabla
H_t(z)| _{\beta -1} \le \bar K_t(z),
$$
where $K: \Omega \times[ 0,T]\times Z\rightarrow \mathbf{R}_+$ is a $\mathcal{P}_{T}\otimes \mathcal{Z}$-measurable function satisfying
\begin{equation*}
K_t(z)+\bar K_t(z)+\int_{D}\left(K_t(z)^{\alpha }+\bar K_t(z)^2\right)\pi
(dz)+\int_{E}\left(K_t(z)^{\alpha \wedge 1 }+\bar K_t(z)\right)\pi
(dz)\leq
N_{0},
\end{equation*}
for all $(\omega,t,z)\in \Omega\times[0,T]\times Z$.
\item There is a constant $\eta\in (0,1)$ such that for all $(\omega ,t,x,z)\in \{(\omega ,t,x,z)\in \Omega \times
[ 0,T]\times\bR^{d_1}\times Z:|\nabla H_t (\omega,x,z)|>\eta \},$ 
\begin{equation*}
| \left(I_{d_1}+\nabla
H_t(x,z)\right)^{-1}|\leq N_{0}.
\end{equation*}
\end{tightenumerate}
\end{assumption}

\begin{assumption}[$\tilde{\beta}$]\label{asm:freeandzero}
There is a constant $N_0>0$ such that for all $(\omega,t)\in\Omega\times [0,T]$,
$$
|c_t|_{\tilde{\beta}}+|\upsilon_t|_{\tilde{\beta}}+|r_{1}^{-\theta }f_t|_{\tilde{\beta}}+|r_{1}^{-\theta }g_t|_{%
\tilde{\beta}}\leq N_{0}.
$$
Moreover, for all $(\omega,t,z)\in \Omega\times [0,T]\times Z$,
\begin{align*}
|\rho
_{t}(z)|_{\tilde{\beta}} +|r_{1}^{-\theta }h_t(z)|_{\tilde{\beta}}\leq l_t(z),
\end{align*}%
where $l: \Omega \times[ 0,T]\times Z\rightarrow \mathbf{R}_+$ is a $\mathcal{P}_{T}\otimes \mathcal{Z}$-measurable function satisfying
\begin{equation*}
l_t(z)+\int_{D}l_t(z)^{2}\pi(dz)+\int_{E}l_t(z)\pi(dz)\leq N_{0}.
\end{equation*}
$(\omega,t,z)\in \Omega\times [0,T]\times Z$.
\end{assumption}

Let Assumptions \ref{asm:smalljumpinfree}$(\bar \beta)$ and  \ref{asm:freeandzero}$(\tilde{\beta})$ hold
for some  $\bar\beta >1\vee \alpha$ and $\tilde{\beta}>\alpha$.  Let  $\tau\le T$  be a stopping time and $\varphi:\Omega \times \bR^{d_1}\rightarrow \bR^{d_2}$ be a  $\cF_{\tau }\otimes \mathcal{B}(\bR^{d_1})$-measurable random field. Consider the system of SIDEs on $[0,T]\times\bR^{d_1}$ 
given by 
\begin{align}\label{eq:FKTransSIDE} 
dv^l_t&=\left(\cL^l_tv_t+\hat{\mathfrak{b}}^i_t\partial_i\phi^{l}+\hat{\mathfrak{c}}^{l\bar l}_t\phi^{\bar l}+\hat{\mathfrak{f}}^l_t\right)dt +\left(\mathcal{N}^{l\varrho}_tv_t+g^{l\varrho}_t\right)dw_t^{\varrho} \notag\\
&\quad +\int_{Z}\left(
\mathcal{I}^{l}_{t,z }v_{t-}+h^l_t(z )\right)[\mathbf{1}_{D}(z)q(dt,dz )+\mathbf{1}_E(z)p(dt,dz)],\;\;\tau<t\le  T,  \notag\\
v^l_t&=\varphi^l ,\;\;t\leq \tau, \;\;l\in\{1,\ldots,d_2\},
\end{align}
where for $\phi \in C_{c}^{\infty }( \bR^{d_1};\mathbf{R}
^{d_2})$ and $l\in\{1,\ldots,d_2\}$,
\begin{align*}
\cL^l_{t}\phi(x) &:=\mathbf{1}_{\{2\}}(\alpha)\frac{1}{2}\sigma_t
^{i\varrho}(x)\sigma_t
^{j\varrho}(x)\partial_{ij}\phi^l(x)+\mathbf{1}_{\{2\}}(\alpha)\sigma^{i\varrho}_t(x)a^{l\bar l\varrho}_t(x)\partial_i\phi^{\bar l}(x)\\
&\quad +\int _{D^{k}}\rho^{l\bar l}_t(x,z )\left(\phi^{\bar l} (x+H_t(x,z ))-\phi^{\bar l} (x)\right)\pi(dz ) \\
&\quad+\int _{D^{k}}\left(\phi^{ l} (x+H_t(x,z ))-\phi^{ l} (x)-%
\mathbf{1}_{(1,2]}(\alpha)\partial_i\phi^l(x)H^{i}_t(x,z )\right)\pi(dz ) \\
\mathcal{N}^{l\varrho}_t\phi^l(x)&:=\mathbf{1}_{\{2\}}(\alpha)\sigma^{i\varrho}_t(x)\partial_i\phi^l(x)+\upsilon^{l\bar l
\varrho}_t(x)\phi^{\bar l}(x), \\
\mathcal{I}^{l}_{t,z }\phi^l (x) &:=(I_{d_2}+\rho^{l\bar l}_t(x,z ))\phi^{\bar l} (x+H_t(x,z ))-\phi^{l}
(x),
\end{align*}
and where 
\begin{align*}
\hat{\mathfrak{b}}^i_t(x):&=\mathbf{1}_{[1,2]}(\alpha)b^i_t(x)+\mathbf{1}_{\{2\}}(\alpha)\sigma_{t}^{j\varrho}(x) \partial_{j}\sigma^{i\varrho}_{t}(x) \\
&\quad +\int_{D} \left(\mathbf{1}_{(1,2]}(
\alpha)H^i_{t}(x,z )-H^i_t(\tilde{H}_t^{-1}(x,z ),z)\right)\pi (dz ),\\
\hat{\mathfrak{c}}^{l\bar l}_t(x):&=c^{l\bar l}_t(x)+ \mathbf{1}_{\{2\}}(\alpha)\sigma
^{j\varrho}_t(x)\partial_j\upsilon^{l\bar l \varrho}_t(x)+\int _D\left( \rho^{l\bar l}_t (x,z)-\rho^{l\bar l}_t (\tilde{H}^{-1}_t(x,z),z)\right)
\pi (dz),\\
\hat{\mathfrak{f}}^l_t(x):&=f^l_t(x)+\mathbf{1}_{\{2\}}(\alpha)\sigma ^{j\varrho}_t(x)\partial_jg^l_t(x) +\int_D \left(h^l_t(x,z)- h^l_t(\tilde{H}^{-1}_t(x,z),z)\right)\pi(dz).
\end{align*}
We associate with  \eqref{eq:FKTransSIDE} the stochastic flow $%
X_{t}=X_{t}(x)=X_{t}\left( \tau ,x\right) ,$ $(t,x)\in [ 0,T]\times 
\bR^{d_1}$, given by \eqref{eq:SmallJumpSDE}. Let $\Gamma_{t}(x)=\Gamma_t
(\tau ,x),$ $(t,x)\in [ 0,T]\times \bR^{d_1}$, be the solution of the linear SDE given by
\begin{align}\label{eq:FKTransyinit}
d\Gamma_{t}(x)& =\left(c_t(X_t(x))\Gamma_{t}(x)+f_t(X_{t}(x))\right)dt+\left(\upsilon^{\varrho}_t(X_{t}(x))\Gamma_{t}(x)+g^{\varrho}_t(X_{t}(x))\right)dw^{\varrho}_{t}\notag\\
&\quad +\int_{Z}\rho_t (\tilde{H}%
^{-1}_t(X_{t-}(x),z),z)\Gamma_{t-}(x)[\mathbf{1}_{D}(z)q(dt,dz )+\mathbf{1}_E(z)p(dt,dz)]  \notag\\
& \quad +\int_{Z}h_t(\tilde{H}%
^{-1}_t(X_{t-}(x),z),z)[\mathbf{1}_{D}(z)q(dt,dz )+\mathbf{1}_E(z)p(dt,dz)],\;\;\tau<t\le  T,\notag  \\
\Gamma_{t}(x)& =0,\;\;t\leq \tau.
\end{align}
Let $\Psi_{t}(x)=\Psi _t(\tau,x),$ $(t,x)\in [ 0,T]\times \bR^{d_1}$, be the unique solution of the linear SDE given by
\begin{align}\label{eq:FKTransmult} 
d\Psi_{t}(x)& =c_t(X_{t}(x))\Psi_t(x)dt+\upsilon^{\varrho}_t(X_{t}(x))\Phi_t(x)dw^{\varrho}_{t}\notag\\
&\quad +
\int_{Z}\rho_t(\tilde{H}^{-1}_t(X_{t-}(x),z),z)\Psi_{t-}(x)[\mathbf{1}_{D}(z)q(dt,dz )+\mathbf{1}_E(z)p(dt,dz)],\;\;\tau<t\le  T,
\notag\\
\Psi _{t}(x)& =I_{d_2},\;\;t\leq \tau .  
\end{align}

In the following lemma, we obtain $p$-th moment estimates of the weighted H{\"o}lder norms of $\Gamma$ and $\Psi$.  

\begin{lemma}
\label{lem:EstimateofFKTranswoinitial} Let Assumptions \ref{asm:smalljumpinfree}$(\bar \beta)$ and \ref{asm:freeandzero}$(\tilde{\beta})$ hold for
some $\bar\beta >1\vee \alpha$ and $\tilde{\beta}>\alpha$.  For each stopping time $\tau\le T$ and $\beta'\in [0 ,\bar{\beta}\wedge \tilde{\beta} )$, there exists a $D([0,T],\cC_{loc}^{\beta'}\allowbreak( \mathbf{R}%
^{d_1};\bR^{d_2})) $-modification of $\Gamma(\tau )$
and $\Psi (\tau )$,  also denoted by $\bar {\Gamma}(\tau )$
and $\Psi (\tau)$, respectively. Moreover, for each $\epsilon >0$ and $p\ge 2$, there is a constant $N=N(d_1,d_2,p,N_{0},T,\beta ',\eta ,\allowbreak\epsilon,\theta)$ such that
\begin{equation}\label{ineq:estimateFKTransGamPhi}
\mathbf{E}\left[\sup_{t\leq T}[|r_1^{-(\theta+\epsilon)}\Gamma_t(\tau)|_{\beta'}^{p}\right]+\mathbf{E}\left[\sup_{t\leq T}|r_1^{-\epsilon}\Psi _{t}(\tau) |_{\beta'}^{p}\right]\leq N.
\end{equation}
\end{lemma}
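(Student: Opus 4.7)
The plan is to reduce the estimate for $\Gamma$ to an estimate for the multiplicative fundamental solution $\Psi$ via a variation-of-constants representation, and then to establish Hölder-in-$x$ estimates for $\Psi$ directly from its linear SDE by applying the Burkholder-Davis-Gundy inequality and Gronwall's lemma to successive spatial derivatives. The random field $X_t(x)$ has already been analyzed via Proposition \ref{Prop:SmallJump}; it is a $\cC^{\beta'}_{loc}$-diffeomorphism with weighted moment bounds of order $r_1^{1+\epsilon}$ on $X_t$ and $r_1^{\epsilon}$ on $\nabla X_t$. All spatial regularity of $\Psi$ and $\Gamma$ will be inherited from these bounds together with Assumptions \ref{asm:smalljumpinfree} and \ref{asm:freeandzero}.

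The first step is to observe that $\Psi_t(x)$ is an invertible $d_2\times d_2$ matrix: its inverse satisfies a dual linear SDE obtained from Itô's formula, and by applying BDG and Gronwall pointwise in $x$ one obtains moments of $\Psi_t(x)^{\pm 1}$ uniformly in $t$. Variation of parameters then yields a formula of the type
\begin{equation*}
\Gamma_t(x) = \Psi_t(x)\int_{\tau}^{t}\Psi_{s-}^{-1}(x)\, dM_s(x),
\end{equation*}
where the semimartingale $dM_s(x)$ is built from $f_s(X_s(x))\,ds$, $g^{\varrho}_s(X_s(x))\,dw^{\varrho}_s$, and the compensated and non-compensated jump contributions from $h_s(\tilde H^{-1}_s(X_{s-}(x),z),z)$, together with the standard Itô correction arising from the product with $\Psi$. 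This reduces the proof to controlling $\Psi$ and $\Psi^{-1}$ in $\cC^{\beta'}_{loc}$ and then inserting the polynomial-growth free terms.

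Next, I would establish the moment estimates for $\Psi$ derivative by derivative. Differentiating the SDE \eqref{eq:FKTransmult} in $x$ and using the chain rule, each $\partial^{\gamma}\Psi_t(x)$ for $|\gamma|\le[\beta']^-$ satisfies a linear SDE whose right-hand side is a polynomial expression in $\{\partial^{\gamma'}\Psi_t(x):\gamma'\le \gamma\}$, the derivatives $\partial^{\gamma'}X_t(x)$, and the spatial derivatives of $c,\upsilon,\rho$ composed with $X_t(x)$ (and of $\tilde H^{-1}_t(\cdot,z)$). Applying Itô's formula to $|\partial^{\gamma}\Psi_t(x)|^p$, taking the supremum in $t$, and using BDG together with the Kunita-type inequality for the jump martingale gives, after absorbing into the Gronwall loop, a bound on $\bE[\sup_{t\le T}|\partial^{\gamma}\Psi_t(x)|^p]$ that is polynomial in $r_1(x)$. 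The $\{\beta'\}^+$-Hölder regularity of the top-order derivatives is then obtained by writing the SDE satisfied by $\partial^{\gamma}\Psi_t(x)-\partial^{\gamma}\Psi_t(y)$, applying the same BDG/Gronwall machinery, and exploiting the $\{\beta'\}^+$-Hölder regularity of $c,\upsilon,\rho$ in $x$ and the Hölder estimates of $X_t$ from Proposition \ref{Prop:SmallJump}. A Kolmogorov continuity argument with sufficiently many moments then produces a càdlàg $\cC^{\beta'}_{loc}$-valued modification; the analogous argument for $\Gamma$ proceeds by inserting the weight $r_1^{-\theta}$ which accommodates the polynomial growth of $f,g,h$ and produces the factor $r_1^{-(\theta+\epsilon)}$ in \eqref{ineq:estimateFKTransGamPhi}.

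The main technical obstacle is controlling the jump integrals $\int_Z\rho_t(\tilde H^{-1}_t(X_{t-}(x),z),z)\,[\cdots]$ (and the $h$-analogue) after differentiation in $x$: each derivative produces additional factors $\nabla \tilde H^{-1}_t\cdot\nabla X_t$ and, via the chain rule, the quantity $|\nabla\rho|\cdot|\nabla\tilde H^{-1}|\cdot|\nabla X_t|$ must still be $z$-integrable against $\pi(dz)$. This is where the joint assumptions $|\nabla H_t(z)|_{\bar\beta-1}\le \bar K_t(z)$, $|\rho_t(z)|_{\tilde\beta}\le l_t(z)$ with $\int(K^{\alpha}+\bar K^2+l^2)\,d\pi<\infty$ on $D$ and $\int(K^{\alpha\wedge 1}+\bar K+l)\,d\pi<\infty$ on $E$, combined with Lemma \ref{lem:compositediffeo} to bound $\nabla \tilde H^{-1}$, are essential; verifying the finiteness of these $z$-integrals at each order of differentiation (and for the Hölder seminorm) is the routine but delicate core of the proof, after which BDG and Gronwall close the estimate and deliver the claimed bound \eqref{ineq:estimateFKTransGamPhi}.
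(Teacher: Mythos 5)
Your core machinery (differentiating the linear SDEs order by order, applying Burkholder--Davis--Gundy and the point-measure moment estimates, closing with Gronwall, and then invoking a Kolmogorov-type continuity result to produce the $D([0,T];\cC^{\beta'}_{loc})$-modification) is exactly what the paper does, and your identification of the $z$-integrability of $|\nabla\rho|\cdot|\nabla\tilde H^{-1}|\cdot|\nabla X_t|$ as the delicate point is on target. However, your first step --- reducing $\Gamma$ to $\Psi$ via the variation-of-constants formula $\Gamma_t(x)=\Psi_t(x)\int_\tau^t\Psi_{s-}^{-1}(x)\,dM_s(x)$ --- contains a genuine gap: $\Psi_t(x)$ need not be invertible under the stated hypotheses. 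At each jump time the equation \eqref{eq:FKTransmult} gives $\Psi_t(x)=\bigl(I_{d_2}+\rho_t(\tilde H^{-1}_t(X_{t-}(x),z),z)\bigr)\Psi_{t-}(x)$, and Assumption \ref{asm:freeandzero} only bounds $|\rho_t(z)|_{\tilde\beta}$ by $l_t(z)$; nothing forces $\det(I_{d_2}+\rho)\neq 0$. (Note that even the positivity corollary in the paper has to impose the extra condition $\rho\ge -1$ in the scalar case, and even then $1+\rho$ may vanish.) So the dual SDE for $\Psi^{-1}$ and the moments of $\Psi^{-1}$ you invoke are not available, and the representation of $\Gamma$ through $\Psi^{\pm1}$ fails in general.

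The fix is simply to drop the variation-of-constants detour: the paper estimates $\Gamma$ directly from its own affine SDE \eqref{eq:FKTransyinit}, treating $f_s(X_s(x))$, $g_s(X_s(x))$ and $h_s(\tilde H^{-1}_s(X_{s-}(x),z),z)$ as free terms, using the multiplicative decomposition
\begin{equation*}
h_t(\tilde{H}^{-1}_t(X_{t-}(x),z),z)=r_{1}^{\theta}(X_{t-}(x))\,\frac{r_{1}^{\theta}(\tilde{H}^{-1}_t(X_{t-}(x),z))}{r_{1}^{\theta}(X_{t}(x))}\,\frac{h_t(\tilde{H}^{-1}_t(X_{t-}(x),z),z)}{r_{1}^{\theta}(\tilde{H}^{-1}_t(X_{t-}(x),z))}
\end{equation*}
together with moment bounds on $r_1^{\theta}(X_t(x))$ to produce the weight $r_1^{-\theta p}(x)$ in the pointwise estimates, and then running the same BDG/Gronwall induction on $\partial^{\gamma}\Gamma$ that you describe for $\Psi$. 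With that change the rest of your argument goes through as in the paper.
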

\begin{proof}
Let $\tau\le T$ be a fixed stopping time  and  $\beta:=\bar{\beta}\wedge \tilde{\beta} $. 
Estimating \eqref{eq:FKTransyinit}   directly and using the Burkholder-Davis-Gundy inequality, Lemma \ref{i:Lp Burkholder for Random Measure}, the multiplicative decomposition
$$
h_t(x,\tilde{H}^{-1}_t(X_{t-}(x),z ),z )=r_{1}^{\theta }(X_{t-}(x)) 
\frac{r_{1}^{\theta }(\tilde{H}^{-1}_t(X_{t-}(x),z)) }{r_{1}^{\theta
}( X_{t}(x)) }\frac{h_t(\tilde{H}^{-1}_t(X_{t-}(x),z
),z )}{r_{1}^{\theta }( \tilde{H}^{-1}_t(X_{t-}(x),z)) },
$$
H\"{o}lder's inequality,  Lemma \ref{lem:compositediffeo} (1), Lemma 3.2 in \cite{LeMi14b}, and Gronwall's inequality, we get that  for all $x$ and $y$,
\begin{equation}
\mathbf{E}\left[\sup_{t\le T}|\Gamma_t(x)|^{p}\right]\leq Nr_1^{-\theta p}(x)
\end{equation}%
and
\begin{equation*}
\mathbf{E}\left[\sup_{t\le T}| \Gamma_t(x)- \Gamma_t
(y)|^{p}\right]\leq N(r_1^{-p\theta}(x)\vee r_1^{-p\theta}(y))|x-y|^{(\beta' \wedge 1)
p},
\end{equation*}
where  $N=N(d_1,p,N_{0},T,\eta ,\theta)$  is a positive constant. Now, assume that $[\beta]^-\ge 1$. As in the proof of Theorem 3.4 in  \cite{Ku04}, it follows that  $\mathfrak{U}_{t}=\nabla\Gamma_t(\tau,x)$ solves 
\begin{align*}
d\mathfrak{U}_{t}&=\left(\upsilon^{\varrho}_t(X_{t})\mathfrak{U}_{t}+\nabla
\upsilon^{\varrho}_t(X_{t})\nabla X_{t}\Gamma_t +\nabla g^{\varrho}_t
(X_{t})\nabla X_{t}\right)dw^{\varrho}_{t}\\
&\quad +\int_Z\rho_t (
\tilde{H}^{-1}_t(X_{t-},z ),z)\mathfrak{U}_{t-}[\mathbf{1}_{D}(z)q(dt,dz )+\mathbf{1}_E(z)p(dt,dz)] \\ 
&\quad +\int_Z \nabla \rho_t (\tilde{H}^{-1}_t(X_{t-},z),z)\nabla [\tilde{H}%
^{-1}_t(X_{t-})]\Gamma_{t-} [\mathbf{1}_{D}(z)q(dt,dz )+\mathbf{1}_E(z)p(dt,dz)]\\ 
&\quad +\int_Z \nabla h_t(x,\tilde{H}^{-1}_t(X_{t},z
),z) \nabla [\tilde{H}%
^{-1}_t(X_{t-})]][\mathbf{1}_{D}(z)q(dt,dz )+\mathbf{1}_E(z)p(dt,dz)]\\
&\quad +\left(c_t(X_{t})\mathfrak{U}_{t}+\nabla c_t(X_{t})\nabla X_{t}\Gamma_t +\nabla f_t(X_{t})\nabla X_{t}\right)dt,\;\; \tau <t \le T,\\
\mathfrak{U}_{t}&=0,\;\;t\le \tau.
\end{align*}
Recall that by Lemma \ref{lem:equivnorm},  a function $\phi:\bR^{d_1}\rightarrow\mathbf{R}^n$, $n\ge 1$ satisfies $|r^{-\theta}\phi |_{\beta}<\infty$ if an only if $|r^{-\theta}\phi|_0,\ldots,|r^{-\theta}\partial^{\gamma} \phi|_0$, $|\gamma|\le [\beta]^-$, and $[r^{-\theta}\partial^{\gamma}\phi]|_{\{\beta\}^+}$ are finite. 
Estimating as above and using Proposition 3.4 in \cite{LeMi14b}, we obtain that for each $p\geq 2
$, there is a constant $N=N(d_1,d_2,p,N_{0},T,\theta)$  such that for all $x$ and $y$,
\begin{equation*}
\mathbf{E}\left[\sup_{t\le T}| \nabla\Gamma_t(x)|^{p}\right]\leq r_1^{-p\theta}(x)N
\end{equation*}%
and
\begin{equation*}
\mathbf{E}\left[\sup_{t\le T}|\nabla\Gamma_t(x)-\nabla \Gamma_t
(y)| ^{p}\right]\leq N(r_1^{-p\theta}(x)\vee r_1^{-p\theta}(y))|x-y|^{((\beta -1
)\wedge 1)p}.
\end{equation*}
Using induction, we get that for each $p\geq 2$ and all multi-indices $\gamma $ with $0\leq |\gamma |\leq %
[ \beta ] ^{-}$ and all $x$, 
\begin{equation*}
\mathbf{E}\sup_{t\leq T}\left[|\partial ^{\gamma }\Gamma_t
(x)|^{p}\right]\leq  r_1^{-p\theta}(x)N,
\end{equation*}%
and for all multi-indices $\gamma $ with $|\gamma |=[\beta ]^{-}$ and all $%
x,y$, 
$$
\mathbf{E}\left[\sup_{t\leq T}|\partial ^{\gamma }\Gamma_t(x)-\partial
^{\gamma }\Gamma_t(y)|^{p}\right]\leq N(r_1^{-p\theta}(x)\vee r_1^{-p\theta}(y))|x-y|^{(\beta -[ \beta ]^-)p},
$$
for  a constant $N=N(d_1,d_2,p,N_{0},T,\beta,\eta ,\theta).$
It is also clear that for each $p\geq 2$ and all multi-indices $\gamma $ with $0\leq |\gamma |\leq %
[ \beta ] ^{-}$ and all $x$, 
\begin{equation*}
\mathbf{E}\left[\sup_{t\leq T}|\partial ^{\gamma} \Psi_t
(x)|^{p}\right]\leq N,
\end{equation*}%
and for all multi-indices $\gamma $ with $|\gamma |=[\beta ]^{-}$ and all $%
x,y$, 
$$
\mathbf{E}\left[\sup_{t\leq T}|\partial ^{\gamma }\Psi_t(x)-\partial
^{\gamma }\Psi_t(y)|^{p}\right]\leq N|x-y|^{(\beta -[\beta]^-
)p}.
$$
We obtain the existence of  a $D([0,T],\cC_{loc}^{\beta '}( \mathbf{R}%
^{d_1};\bR^{d_2})) $-modification of $ \Gamma(\tau )$
and $\Psi(\tau )$ using estimate \eqref{ineq:estimateFKTransGamPhi}  and Corollary 5.4 in \cite{LeMi14b}. This completes the proof.
\end{proof}

Let $\tilde{\Phi}_t (x)=\tilde{\Phi}_t (\tau, x),$ $(t,x)\in [0,T]\times \mathbf{R}%
^{d_1}$, be the solution of the linear SDE given by
\begin{align*}
d\tilde{\Phi}_{t}(x)& =\left(c_t(X_t(x))\tilde{\Phi}_{t}(x)+f_t(X_{t}(x))\right)dt+\left(\upsilon^{\varrho}_t(X_{t}(x))\tilde{\Phi}_{t}(x)+g^{\varrho}_t(X_{t}(x))\right)dw^{\varrho}_{t}\notag\\
&\quad +\int_{Z}\rho_t (\tilde{H}%
^{-1}_t(X_{t-}(x),z),z)\tilde{\Phi}_{t-}(x,y)[\mathbf{1}_{D}(z)q(dt,dz )+\mathbf{1}_E(z)p(dt,dz)]  \notag\\
& \quad +\int_{Z}h_t(\tilde{H}%
^{-1}_t(X_{t-}(x),z),z)[\mathbf{1}_{D}(z)q(dt,dz )+\mathbf{1}_E(z)p(dt,dz)],\;\;\tau<t\le  T,  \\
\tilde{\Phi}_{t}(x)& =\varphi(x),\;\;t\leq \tau.\notag
\end{align*}

The following is a simple corollary of Lemma \ref{lem:EstimateofFKTranswoinitial}. 

\begin{corollary}
\label{cor:EstimateofFKTranswinitial}
Let Assumptions \ref{asm:smalljumpinfree}$(\bar \beta)$ and  \ref{asm:freeandzero}$(\tilde{\beta})$ hold
for some  $\bar\beta >1\vee \alpha$ and $\tilde{\beta}>\alpha$.   For each stopping time $\tau\le T$ and $\cF%
_{\tau }\otimes \mathcal{B}(\bR^{d_1})$-measurable random field $\varphi$ such that  for some   $\beta'\in [0 ,\bar{\beta}\wedge \tilde{\beta} )$,  $\bP$-a.s.\ $\varphi\in\cC^{\beta'}_{loc}(\mathbf{R}
^{d_1};\bR^{d_2})$,  there is a $D([0,T];\cC_{loc}^{\beta'}(\bR^{d_1},\bR^{d_2})) $-modification of $%
\tilde{\Phi}(\tau)$, also denoted by $\tilde{\Phi}(\tau),$ and $\bP$-a.s.\ for  all 
$(t,x)\in[0,T]\times\bR^{d_1},$ $$\tilde{\Phi}_t (\tau,x)=\Psi_t (x)\varphi (x)+\Gamma%
_t(x).$$ Moreover,  if for some $\theta'\geq 0$ and $\beta'\in [0 ,\bar{\beta}\wedge \tilde{\beta} )$, $\bP$-a.s.\ $r_{1}^{-\theta'}\varphi
\in \cC^{\beta'}(\bR^{d_1};\bR^{d_2})$,
then for each $\epsilon>0$ and $p\geq 2$, there is a constant $N=N(d_1,d_2,p,N_{0},T,\theta,\theta',\beta ',\epsilon)$ such that 
\begin{equation}\label{ineq:EstimateofFKwithInitial}
\mathbf{E}\left[\sup_{t\le T}|r_{1}^{-(\theta\vee \theta')-\epsilon }\tilde{\Phi}_t(\tau)|
_{\beta '}^{p}\big|\cF_{\tau }\right]\leq N(|  r_{1}^{-\theta'}\varphi%
|_{\beta'}^{p}+1).
\end{equation}
\end{corollary}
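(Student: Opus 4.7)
The plan is to establish the representation $\tilde{\Phi}_t(\tau,x)=\Psi_t(\tau,x)\varphi(x)+\Gamma_t(\tau,x)$ by an It\^o/uniqueness argument, and then to read off the existence of the $D([0,T];\cC_{loc}^{\beta'})$-modification and the moment bound directly from Lemma \ref{lem:EstimateofFKTranswoinitial}. The key point is that $\tilde{\Phi}$ solves a linear SDE (in the $\tilde{\Phi}$-variable) whose forcing $f,g,h$ is captured by $\Gamma$ and whose homogeneous dynamics are captured by $\Psi$, so variation of parameters works on the nose.

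First, I would fix $x$, freeze $\varphi(x)$ as $\cF_\tau$-measurable, and apply It\^o's formula to $\Psi_t(x)\varphi(x)+\Gamma_t(x)$. Because $\varphi(x)$ is constant in $t$ for $t\ge\tau$, the differential $d(\Psi_t(x)\varphi(x))$ is obtained by multiplying the SDE \eqref{eq:FKTransmult} on the right by $\varphi(x)$; adding to \eqref{eq:FKTransyinit} and grouping the drift, Brownian and Poisson parts, one sees termwise that $Y_t(x):=\Psi_t(x)\varphi(x)+\Gamma_t(x)$ satisfies the same linear SDE as $\tilde{\Phi}_t(x)$ with initial value $\Psi_\tau(x)\varphi(x)+\Gamma_\tau(x)=I_{d_2}\varphi(x)+0=\varphi(x)$. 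By strong uniqueness for linear SDEs with bounded measurable coefficients (the coefficients evaluated at $X_t(x)$ are $\tilde{\mathbf{F}}$-adapted and locally bounded by Assumption \ref{asm:freeandzero}), we conclude that $\bP$-a.s.\ $\tilde{\Phi}_t(\tau,x)=\Psi_t(x)\varphi(x)+\Gamma_t(x)$ for all $t$.

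With this pointwise-in-$x$ identity in hand, I would \emph{define} the modification of $\tilde{\Phi}$ on all of $\bR^{d_1}$ by the right-hand side, using the $D([0,T];\cC_{loc}^{\beta'})$-modifications of $\Psi(\tau)$ and $\Gamma(\tau)$ produced by Lemma \ref{lem:EstimateofFKTranswoinitial}. Since $\varphi\in\cC_{loc}^{\beta'}$ $\bP$-a.s., the product $\Psi_t(x)\varphi(x)$ lies in $\cC_{loc}^{\beta'}$ for each $t$ (multiplication is bounded on $\cC^{\beta'}$ on bounded domains), and c\`adl\`ag in $t$ with values in $\cC_{loc}^{\beta'}$; adding the c\`adl\`ag $\cC_{loc}^{\beta'}$-valued process $\Gamma$ gives the claimed modification. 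The pointwise equality above shows this really is a modification of $\tilde{\Phi}$.

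For the moment estimate \eqref{ineq:EstimateofFKwithInitial}, I would split using the triangle/product inequality for the weighted H\"older norm:
\begin{equation*}
|r_{1}^{-(\theta\vee\theta')-\epsilon}\tilde{\Phi}_t|_{\beta'}\le |r_{1}^{-\epsilon/2}\Psi_t|_{\beta'}\,|r_{1}^{-\theta'}\varphi|_{\beta'}+|r_{1}^{-\theta-\epsilon/2}\Gamma_t|_{\beta'},
\end{equation*}
which uses the elementary inequality $|r_1^{-a-b}fg|_{\beta'}\lesssim |r_1^{-a}f|_{\beta'}|r_1^{-b}g|_{\beta'}$ from the algebra property of weighted H\"older norms (provable via Lemma \ref{lem:equivnorm}). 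Raising to the $p$-th power, taking $\sup_{t\le T}$, and conditioning on $\cF_\tau$ (noting that $|r_1^{-\theta'}\varphi|_{\beta'}$ is $\cF_\tau$-measurable and may be pulled out), the bound reduces to the \emph{conditional} versions of \eqref{ineq:estimateFKTransGamPhi} for $\Psi(\tau)$ and $\Gamma(\tau)$. The main obstacle is precisely this last point: Lemma \ref{lem:EstimateofFKTranswoinitial} states unconditional estimates, whereas here we need them conditionally on $\cF_\tau$. I would handle this either by replaying the moment computations of that lemma conditionally---every constant there depends only on the deterministic bound $N_0$, the data $(d_1,d_2,p,T,\beta',\eta,\epsilon,\theta)$, and the coefficient norms, all of which survive the conditioning---or, equivalently, by using a regular conditional probability and noting that under $\bP(\cdot|\cF_\tau)$ the driving noise $w,p$ restricted to $(\tau,T]$ still has the correct law by independence of the increments past $\tau$. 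Either route yields the required conditional bound, and combining gives \eqref{ineq:EstimateofFKwithInitial}.
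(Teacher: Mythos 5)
Your proposal is correct and follows exactly the route the paper intends: the paper gives no explicit proof, stating only that the corollary follows from Lemma \ref{lem:EstimateofFKTranswoinitial}, and the intended argument is precisely your variation-of-parameters identity $\tilde{\Phi}_t=\Psi_t\varphi+\Gamma_t$ (verified by It\^o's formula and uniqueness for the linear SDE) combined with the algebra property of weighted H\"older norms and the moment bounds \eqref{ineq:estimateFKTransGamPhi}. You also correctly flag, and adequately resolve, the one genuine subtlety the paper glosses over --- that \eqref{ineq:estimateFKTransGamPhi} is stated unconditionally while \eqref{ineq:EstimateofFKwithInitial} requires the conditional version, which holds because the Gronwall/Burkholder--Davis--Gundy computations for $\Psi$ and $\Gamma$ start from $\cF_\tau$-measurable initial data with deterministic constants.
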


Now we are ready to state our main result concerning fully-degenerate SIDEs and their connection with linear transformations of inverse flows of jump SDEs.  

\begin{proposition}\label{prop:RepresentationSmallJumps}
Let Assumptions \ref{asm:smalljumpinfree}$(\bar \beta)$ and  \ref{asm:freeandzero}$(\tilde{\beta})$ hold
for some  $\bar\beta >1\vee \alpha$ and $\tilde{\beta}>\alpha$.    For each stopping time $\tau\le T$ and $\cF%
_{\tau }\otimes \mathcal{B}(\bR^{d_1})$-measurable random field $\varphi$ such that   for some  $\beta '\in (\alpha ,\bar{\beta}\wedge \tilde{\beta} )$  and $\theta'\geq 0$, $\bP$-a.s.\
$ r_{1}^{-\theta'}\varphi\in\cC^{\beta'}(\mathbf{R}
^{d_1};\bR^{d_2})$,  we have that  $\bP$-a.s.\ $\tilde{\Phi }(\tau,X^{-1}(\tau ))\in 
D([0,T];\cC_{loc}^{\beta '}(\bR^{d_1};\bR^{d_2}))$  and $v_t(x)=v_t(\tau,x)=\tilde{\Phi}_t (\tau,X_{t}^{-1}(\tau,x))$ solves  \eqref{eq:FKTransSIDE}. Moreover,   for each $\epsilon>0$ and $p\geq 2$,
\begin{equation}\label{ineq:RepresentationEstimateproof}
\mathbf{E}\left[\sup_{t\le T}| r_{1}^{-(\theta\vee \theta')-\epsilon }v_t(\tau)| _{\beta'}^{p}\big|\cF_{\tau }%
\right] \leq N(| r_{1}^{-\theta'}\varphi%
|_{\beta'}^{p}+1),
\end{equation}
for a constant  $N=N(d_1,d_2,p,N_{0},T,\beta ',\eta ,\epsilon,\theta,\theta')$.
\end{proposition}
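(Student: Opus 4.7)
The plan is to combine the identification of $X^{-1}$ as a solution of a SIDE (Proposition \ref{Prop:SmallJump}) with the Feynman--Kac type representation of Corollary \ref{cor:EstimateofFKTranswinitial}, and then apply an It\^o--Wentzell formula to the composition $\tilde{\Phi}_t(\tau, X_t^{-1}(\tau,x))$ to check that it satisfies \eqref{eq:FKTransSIDE}. Since $X^{-1}$ and $\tilde{\Phi}$ are driven by the \emph{same} Wiener process and Poisson random measure, the cross (Kunita--Ito correction) terms will exactly account for the extra drift/jump terms hidden in $\hat{\mathfrak{b}}$, $\hat{\mathfrak{c}}$, $\hat{\mathfrak{f}}$ in \eqref{eq:FKTransSIDE}. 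In this way the somewhat awkward extra integrals in these coefficients will appear as a direct consequence of interchanging $X^{-1}$ with the dependence on $x$ inside $\tilde{\Phi}$.

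First I would fix the modifications supplied by Proposition \ref{Prop:SmallJump} (so $X_t^{-1}(\tau,\cdot) \in D([0,T]; \cC_{loc}^{\beta'}(\bR^{d_1};\bR^{d_1}))$) and Corollary \ref{cor:EstimateofFKTranswinitial} (so $\tilde{\Phi}(\tau,\cdot) \in D([0,T];\cC_{loc}^{\beta'}(\bR^{d_1};\bR^{d_2}))$, with $\tilde{\Phi}_t(\tau,y) = \Psi_t(\tau,y)\varphi(y)+\Gamma_t(\tau,y)$). Using continuity of composition on these Skorokhod spaces, the process $v_t(x) := \tilde{\Phi}_t(\tau, X_t^{-1}(\tau,x))$ is well-defined and lies in $D([0,T]; \cC_{loc}^{\beta'}(\bR^{d_1};\bR^{d_2}))$, which gives the first assertion. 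Next, by a smooth mollification argument (approximate $\varphi$ and the coefficients, or rather just $\tilde{\Phi}$ in $y$, by $C^\infty$-functions whose derivatives converge in the appropriate weighted H\"older sense via Lemma \ref{lem:EstimateofFKTranswoinitial}) and the continuous-dependence estimates already built in, I may reduce the derivation of the SIDE to the case in which $\tilde\Phi(\tau,\cdot)$ is a semimartingale random field that is as smooth in $y$ as needed to apply It\^o--Wentzell.

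The main step is then to apply the It\^o--Wentzell formula to $\tilde{\Phi}_t(\tau, X_t^{-1}(\tau,x))$. Writing $Y_t = X_t^{-1}(\tau,x)$ and using the SIDE \eqref{eq:SmallJumpSIDE} satisfied by $Y_t$, the formula produces four kinds of contributions: the $dt$ and $dw^\varrho$ parts of $\tilde\Phi$ evaluated at $Y_t$; spatial-derivative terms coming from the drift, diffusion, and compensated jumps of $Y_t$; quadratic-covariation terms between the continuous martingale parts of $\tilde\Phi(\cdot,y)$ and of $Y$; and the jump correction
\[
\tilde{\Phi}_{t-}(Y_{t-}+H_t(Y_{t-},z))-\tilde{\Phi}_{t-}(Y_{t-})-\nabla\tilde{\Phi}_{t-}(Y_{t-})\,H_t(Y_{t-},z)
\]
integrated against $p(dt,dz)$. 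The continuous covariation terms reassemble into the $\mathbf{1}_{\{2\}}(\alpha)\sigma^{j\varrho}\partial_j(\cdot)$ expressions appearing in $\hat{\mathfrak{b}},\hat{\mathfrak{c}},\hat{\mathfrak{f}}$; the compensator terms combine with the jumps of $Y$ to generate the $\int_D[\,\cdot\,-\,\cdot\,(\tilde H^{-1})\,]\pi(dz)$ integrals in these same coefficients. Checking that all terms align with \eqref{eq:FKTransSIDE} is bookkeeping, but careful: this is where I expect the main obstacle to lie.

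The hardest point is precisely this change-of-variable accounting in the presence of jumps from $Z = D\cup E$ with potentially unbounded, only $\alpha$-integrable, kernels. One must justify the use of It\^o--Wentzell under the mere moment bounds produced by Lemma \ref{lem:EstimateofFKTranswoinitial}; verify that the $\pi(dz)\,dt$ integrals of differences like $H_t(x,z)-H_t(\tilde H_t^{-1}(x,z),z)$ actually converge using Lemma \ref{lem:compositediffeo} and the estimate \eqref{ineq:estimateofIntegralalpha2} type bounds; and handle the $D$ versus $E$ split so that $(1,2]$ and $[0,1]$ ranges of $\alpha$ are treated uniformly. Once this identification is complete, the bound \eqref{ineq:RepresentationEstimateproof} follows by combining the weighted H\"older estimate for $\tilde\Phi$ from Corollary \ref{cor:EstimateofFKTranswinitial} with the $L^p$-bound \eqref{ineq:SmallJumpEst} on $X^{-1}$, conditioning on $\cF_\tau$, composing via a standard H\"older-norm-of-composition inequality (absorbing the $r_1$ weights at the cost of an arbitrary $\epsilon>0$ loss), and applying H\"older's inequality in $\omega$.
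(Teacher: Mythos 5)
Your proposal follows essentially the same route as the paper: identify the composition $\tilde{\Phi}_t(\tau,X_t^{-1}(\tau,x))$ as càdlàg $\cC^{\beta'}_{loc}$-valued via Corollary \ref{cor:EstimateofFKTranswinitial} and the flow regularity, apply the Itô--Wentzell formula (Proposition \ref{prop:Ito-Ventzel}) for each fixed $x$ with $L_t=X_t^{-1}(\tau,x)$ so that the covariation and compensator cross-terms reproduce $\hat{\mathfrak{b}},\hat{\mathfrak{c}},\hat{\mathfrak{f}}$, and then obtain \eqref{ineq:RepresentationEstimateproof} from Lemma \ref{lem:GeneralCompWeightedlemma}, H\"older's inequality, and the moment bounds on $X^{-1}$ and $\tilde\Phi$. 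The only cosmetic difference is your extra mollification of $\tilde\Phi$, which is unnecessary since the paper's version of Itô--Wentzell already applies to $D([0,T];\cC^{\alpha}_{loc})$-valued fields.
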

\begin{proof}
Fix a stopping time  $\tau\le T$ and random field $\varphi$ such that   for some   $\beta '\in (\alpha ,\bar{\beta}\wedge \tilde{\beta} )$  and $\theta'\geq 0$, $\bP$-a.s.\
$ r_{1}^{-\theta'}\varphi\in\cC^{\beta'}(\mathbf{R}
^{d_1};\bR^{d_2})$. By virtue of Corollary \ref{cor:EstimateofFKTranswinitial} and Theorem 2.1 in \cite{LeMi14b}, $\bP$-a.s.\ $$\tilde{\Phi}(\tau,X^{-1}(\tau)) \in D( [0,T];\cC_{loc}^{\beta'}( \bR^{d_1},\bR^{d_2})).$$  Then using the Ito-Wenzell formula  (Proposition \ref{prop:Ito-Ventzel}) and following a simple calculation, we obtain  that $v_t(\tau,x):=\tilde{\Phi}_{t}(\tau,X^{-1}_{t}(\tau,x)) $ solves  \eqref{eq:FKTransSIDE}.  
By Theorem 2.1 in \cite{LeMi14b} and Corollary \ref{cor:EstimateofFKTranswinitial}, for each $\epsilon>0$ and  $p\geq 2,$  there exists a constant $ N= N(d_1,p,N_{0},T,\beta',\eta ,\epsilon)$  such that 
\begin{equation} \label{ineq:inverseflowmomentest}
\mathbf{E}[\sup_{t\le T}|r_1^{-(1+\epsilon)}X_t^{-1}(\tau)|_{\beta'}^{p}]+\mathbf{E}[\sup_{t\le T}|r_1^{-\epsilon}\nabla X_t^{-1}(\tau)|_{\beta'-1}^{p}] \le 
 N.
 \end{equation}
Therefore applying Lemma \ref{lem:GeneralCompWeightedlemma} and H{\"o}lder's inequalty and using the estimates \eqref{ineq:inverseflowmomentest} and \eqref{ineq:EstimateofFKwithInitial}, we obtain \eqref{ineq:RepresentationEstimateproof}, which completes the proof.
\end{proof}

\subsection{Adding uncorrelated part (Proof of Theorem \ref{thm:repwcorrec})}\label{sec:uncorrelated}
\begin{proof}[Proof of Theorem \ref{thm:repwcorrec} ]
Fix a stopping time  $\tau\le T$ and random field $\varphi$ such that   for some   $\beta '\in (\alpha ,\bar{\beta}\wedge \tilde{\beta} )$  and $\theta'\geq 0$, $\bP$-a.s.\
$ r_{1}^{-\theta'}\varphi\in\cC^{\beta'}(\mathbf{R}
^{d_1};\bR^{d_2})$.   Consider the system of SIDEs given by 
\begin{align}\label{eq:UncorrelatedSIDE} 
d\tilde{v}^l_t &=\left((\mathcal{L}^{1;l}_t+\mathcal{L}^{2;l}_t)\tilde{v}_t+\mathbf{1}_{[1,2]}(\alpha)\hat{b}^i_t\partial_iu_t^{l}+\hat{c}^{l\bar l}_tu_t^{\bar l}(x)+\hat{f}^l_t\right)dt+\left(\mathcal{N}^{1;l\varrho}_t\tilde{v}_t+g^{l\varrho}_t\right)dw^{1;\varrho}_t \notag\\
&\quad +\mathcal{N}^{2;l\varrho}_t\tilde{v}_tdw^{2;\varrho}_t +\int_{Z^{1}}\left(\mathcal{I}^{1;l}_{t,z }\tilde{v}_{t-}+h^l_t(z)\right)[\mathbf{1}_{D^{1}}(z)q^{1}(dt,dz )+\mathbf{1}_{E^{1}}p^{1}(dt,dz)]\notag\\
&\quad +\int_{Z^{2}}\mathcal{I}^{2;l}_{t,z }\tilde{v}_{t-}[\mathbf{1}_{D^{2}}(z)q^{2}(dt,dz )+\mathbf{1}_{E^{2}}(z)p^{2}(dt,dz)]\,\;\;\tau
< t\leq T,   \\
\tilde{v}^l_t&=\varphi^l ,\;\;t\leq \tau , \;\;l\in\{1,\ldots,d_2\}, \notag
\end{align}%
where for $\phi \in C^{\infty }_{c}( \bR^{d_1};\mathbf{R}
^{d_2})$ and $l\in\{1,\ldots,d_2\}$,
\begin{align*}
\mathcal{N}^{2;l\varrho}_t\phi(x)&:=\mathbf{1}_{\{2\}}(\alpha)\sigma_t^{2;i\varrho}(x)\partial_i\phi^l(x)+\upsilon^{2;l\bar l\varrho}_t(x)\phi^{\bar l}(x), \;\;\varrho\ge 1, \\
\mathcal{I}^{2;l}_{t,z }\phi (x) &:=(I_{d_2}^{l\bar l}+\rho^{2;l\bar l}_{t}(x,z ))\phi^{\bar l} (x+H^{2}_t(x,z
))-\phi^l (x).
\end{align*}
By Proposition \ref{prop:RepresentationSmallJumps},  $\bP$-a.s.\ $\Phi (\tau,X^{-1}(\tau ))\in 
D([0,T];\cC_{loc}^{\beta '}(\bR^{d_1};\bR^{d_2}))$  and $\tilde{v}_t\allowbreak (\tau,x)\allowbreak =\Phi_{t}(\tau,\allowbreak X_{t }^{-1}(\tau,x ))$ solves  \eqref{eq:FKTransSIDE}. We write $v_t(x)\allowbreak=v_t(\tau,x)$. Moreover,  for each $\epsilon>0$ and $p\geq 2$, 
\begin{equation}\label{ineq:estimateunconditioned}
\mathbf{E}\left[\sup_{t\le T}| r_{1}^{-(\theta\vee \theta')-\epsilon}\tilde{v}_t(\tau)| _{\beta'}^{p}\big|\cF_{\tau }%
\right] \leq N(|  r_{1}^{-\theta'}\varphi%
|_{\beta'}^{p}+1),
\end{equation}
where  $N=N(d_1,d_2,p,N_{0},T,\beta ',\eta^{1} ,\eta^{2},\epsilon,\theta,\theta')$  is a positive constant.
Without loss of generality we will assume that  for all $\omega$ and $t$, $ |r_{1}^{-\theta'}\varphi%
|_{\beta'}\le N$, since we can always multiply the equation by indicator function.
For each $n\in\mathbf{N}\cup\{0\}$, let $C ^{n}_{loc}(\bR^{d_1};\bR^{d_2})$ be the separable Fr\'echet space of $n$-times continuously differentiable functions $f:\bR^{d_1}\rightarrow \bR^{d_2}$ endowed with the countable  set of semi-norms given by
\begin{equation}\label{eq:integerHolder}
|f|_{n,int}=\sum_{0 \le|\gamma|\le n}\sup_{|x|\le k}|\partial^{\gamma}f(x)|, \;\;k\in\mathbf{N}.
\end{equation}
Owing to Lemma \ref{lem:optionalprojection}, there is a 
the family of measures  $E^{t}_{\omega}(dU)$, $(\omega,t)\in \Omega\times [0,T]$ on   $D([0,T];$ $C ^{[\beta]^-}_{loc}(\bR^{d_1};\bR^{d_2}))$,   corresponding to $\mathfrak{A}=\tilde{v}$  such that for all  bounded  $G:\Omega \times [0,T]\times [0,T]\times D([0,T];C ^{[\beta]^-}_{loc}(\bR^{d_1};\bR^{d_2}))\rightarrow \bR^{d_2}$ that are  $\mathcal{O_T}\times \cB\left(
[0,T]\right) \times \mathcal{B}(D( [0,T]; C ^{[\beta]^-}_{loc}(\bR^{d_1};\bR^{d_2}))) $ measurable, $\bP$-a.s.\ for all $t$, we have
$$
E^{t}[G_t(t,\tilde{v})]=\int_{D([0,T];C_{loc}^{[\beta']^-}(\bR^{d_1};\bR^{d_2}))}G_t(t,U)E^{t}(dU)=\mathbf{E}\left[G_t(t,\tilde{v})|\cF_{t}\right],
$$
where the right-hand-side is the c\`{a}dl\`{a}g  modification of the  conditional expectation.
Set
$$
\hat{u}_t(x)=\hat{u}_t(\tau,x)=E^{t}[\tilde{v}_t(\tau,x)]=\int_{D( [0,T];C_{loc}^{[\beta']^-}(\bR^{d_1};\bR^{d_2})) } U_t(x)E^{t}(dU).
$$
Let $\lambda=(\theta\vee \theta')+\epsilon$. 
We claim that for all multi-indices $\gamma$ with $|\gamma|\le [\beta]^-$, $\bP$-a.s.\ for all $t$ and  $x$, 
$$
\partial^{\gamma} [r_1^{-\lambda}(x) \hat{u}_t(x)]=\int_{D( [0,T];C_{loc}^{[\beta']^-}(\bR^{d_1};\bR^{d_2})) } \partial^{\gamma}[r_1^{-\lambda}(x)U_t(x)] E^{t}(dU)=E^{t}[\partial^{\gamma}[r_1^{-\lambda}(x)\tilde{v}_t(x)]].
$$
Indeed, since $$
M_t=E^t\left[\sup_{s\le T}|\partial^{\gamma}[r_1^{-\lambda}\tilde{v}_s]|_0 \right], \;\;t\in [0,T],
$$
is a $\left( \mathbf{F},\mathbf{P}\right) $ martingale, we have
\begin{equation}\label{ineq:doobsquareMart}
\mathbf{E}\left[\sup_{t\le T}|M_t|^2\right]\le 4 \mathbf{E}\left[|M_T|^2\right]\le  4\mathbf{E}\left[\sup_{t\le T}|\partial^{\gamma}[r_1^{-\lambda}\tilde{v}_t]|_0^2|\right]<\infty,
\end{equation}
and hence $\bP$-a.s.\ for all $t$,
$$
\int_{D( [0,T];C_{loc}^{[\beta']^-}(\bR^{d_1};\bR^{d_2})) }\sup_{s\le T,x\in \bR^{d_1}}|\partial^{\gamma}[r_1^{-\lambda}(x)U_s(x)]| E^{t}(dU)=E^t\left[\sup_{t\le T}|\partial^{\gamma}[r_1^{-\lambda}\tilde{v}_t]|_0 \right]<\infty.
$$
Similarly, since $\mathbf{E}\left[\sup_{t\le T}|r_1^{-\lambda}\tilde{v}_t|_{\beta'}^2\right]<\infty$, $\bP$-a.s.\ for each $x$ and $y$,
\begin{align*}
\frac{|\partial^{\gamma} [r_1^{-\lambda}(x) \hat{u}_t(x)]-\partial^{\gamma} [r_1^{-\lambda}(y) \hat{u}_t(y)]|}{|x-y|^{\{\beta'\}^+}}
&\le E^{t}\left[\frac{|\partial^{\gamma}[r_1^{-\lambda}(x)\tilde{v}_t(x)]-\partial^{\gamma}[r_1^{-\lambda}(y)\tilde{v}_t(y)]|}{|x-y|^{\{\beta'\}^+}}\right]\\
&\le E^{t}[|r_1^{-\lambda}\tilde{v}_t|_{\beta'}],
\end{align*}
and hence, $\bP$-a.s.\ 
$$
\sup_{t\le T}|r_1^{-\lambda}\hat{u}_t|_{\beta'}\le \sup_{t\le T}E^{t}\left[\sup_{t\le T}|r_1^{-\lambda}\tilde{v}_t|_{\beta'}\right]<\infty.
$$
Thus, $\bP$-a.s.\ $r_1^{-\lambda }(\cdot)\hat{u}(\tau)\in D( [0,T];\cC^{\beta'}(\bR^{d_1};\bR^{d_2}))$ and  \eqref{eq:estofoptionalprojection} follows from \eqref{ineq:estimateunconditioned} (see the argument \eqref{ineq:doobsquareMart}).
 For each $l\in\{1,\ldots,d_2\}$, let%
\begin{align*}
\mathcal{A}^l_t(x) &=\varphi^l (x)+\int_{]\tau,\tau\vee t]}\left((\mathcal{L}^{1;l}_s+\mathcal{L}^{2;l}_s)\hat{u}_s(x)+\mathbf{1}_{[1,2]}(\alpha)\hat{b}^i_s(x)\partial_i\hat{u}_s^{l}(x)+\hat{c}^{l\bar l}_s(x)\hat{u}_s^{\bar l}(x)+\hat{f}^l_s(x)\right)ds\\ &\quad+\int_{]\tau,\tau\vee t]}\left(\mathcal{N}^{1;l\varrho}_s\hat{u}_s(x)+g^{l\varrho}_s(x)\right)dw^{1;\varrho}_s\notag  \\
&\quad +\int_{]\tau,\tau\vee t]}\int_{Z^{1}}\left(\mathcal{I}^{1;l}_{s,z }\hat{u}_{s-}(x)+h^l_s(x,z)\right)[\mathbf{1}_{D^{1}}(z)q^{1}(ds,dz )+\mathbf{1}_{E^{1}}(z)p^{1}(ds,dz)].
\end{align*}%
By Theorem 12.21 in \cite{Ja79}, the representation property holds for $%
\left( \mathbf{F},\mathbf{P}\right) $, and hence every bounded $\left( \mathbf{F},\mathbf{P}\right) $- martingale issuing from zero can be represented
as%
\begin{equation*}
M_{t}=\int_{]0,t]}o^{\varrho} _{s}dw_{s}^{1;\varrho}+\int_{]0,t]}\int_{Z^{1}} e_s(z)q^{1}(ds,dz),\;\;t\in [0,T],
\end{equation*}%
where 
\begin{equation*}
\mathbf{E}\int_{]0,T]}|o _{s}|^{2}ds+\mathbf{E}\int_{]0,T]}\int_{Z^{1}}|e_s(z)|^{2}\pi ^{1}(dz)
ds<\infty .
\end{equation*}%
Then for an arbitrary $\mathbf{F}$-stopping time $\bar{\tau}\leq T$ and bounded $\left( \mathbf{F},\mathbf{P}\right) $- martingale, applying It\^{o}'s product rule and taking the expectation, we obtain 
\begin{equation*}
\mathbf{E}\tilde{v}_{\bar{\tau}}(\tau,x)\bar M_{\bar{\tau}}=\mathbf{E}%
\mathcal{A}_{\bar{\tau}}(x)\bar M_{\bar{\tau}}.
\end{equation*}%
Since the optional projection is unique, $\bP$-a.s.\ for all $t$ and $x$, $\hat{u}_t(x)=\mathcal{A}_t(x)$. This completes the proof.
\end{proof}
\subsection{Interlacing a sequence of large jumps (Proof of Theorem \ref{thm:Existwocorrec})}\label{s:Large Jumps}
\begin{proof}[Proof of Theorem \ref{thm:Existwocorrec}]
Fix a stopping time  $\tau\le T$ and random field $\varphi$ such that   for some   $\beta '\in (\alpha ,\bar{\beta}\wedge \tilde{\beta} )$  and $\theta'\geq 0$, $\bP$-a.s.\
$ r_{1}^{-\theta'}\varphi\in\cC^{\beta'}(\mathbf{R}
^{d_1};\bR^{d_2})$.  For any $\delta>0$, we can rewrite  \eqref{eq:FullSIDE} as 
\begin{align}\label{eq:Largejump}
du^l_t&=\left((\bar{\mathcal{L}}_{t}^{1;l}+\mathcal{L}%
_{t}^{2;l})u_t+\mathbf{1}_{[1,2]}(\alpha)\bar{b}^i_t\partial_iu_t^{l}+\bar{c}^{l\bar l}_tu_t^{\bar l}+f^l_t\right)dt+\left(\mathcal{N}_{t}^{1;l\varrho}u_t+g^{l\varrho}_t\right)dw_{t}^{1;\varrho}\notag \\
&\quad +\int_{Z^{1} }\left(\bar{\mathcal{I}}
_{t,z}^{1;l}u_{t-}+\bar h^l_t(z)\right)[1_{D^{1}}(z)q^{1}(dt,dz)+1_{E^{1}}(z)p^{1}(dt,dz)] 
\notag \\
&\quad  +\int_{Z^{1}}\left(\mathbf{1}
_{(D^{1}\cup E^{1})\cap \{K_t^{1}> \delta \}}(z)+\mathbf{1}_{V^{1}}(z)\right)\left(\mathcal{I}
_{t,z}^{1;l}u_{t-}+h^l_t(z)\right)p^{1}(dt,dz),\;\;\tau<t\le T , \notag  \\
u^l_t&=\varphi^l,\;\;t\leq \tau , \;\;l\in\{1,\ldots,d_2\},
\end{align}
where for $\phi \in C^{\infty }_{c}( \bR^{d_1};\mathbf{R}
^{d_2})$ and $l\in\{1,\ldots,d_2\}$,
\begin{align*}
\bar{\mathcal{L}}_{t}^{1;l}\phi(x)&:=\mathbf{1}_{\{2\}}(\alpha)\frac{1}{2}\sigma_t
^{1;i\varrho}(x)\sigma_t
^{1;j\varrho}(x)\partial_{ij}\phi^l(x)+\mathbf{1}_{\{2\}}(\alpha)\sigma^{k;i\varrho}_t(x)\upsilon^{1;l\bar l\varrho}_t(x)\partial_i\phi^{\bar l}(x)\\
&\quad +\int _{D^{1}}\bar\rho^{1;l\bar l}_t(x,z )\left(\phi^{\bar l} (x+\bar H^{1}_t(x,z ))-\phi^{\bar l} (x)\right)\pi^{1}(dz ) \\
&\quad+\int _{D^{1}}\left(\phi^{ l} (x+\bar H^{1}_t(x,z ))-\phi^{ l} (x)-%
\mathbf{1}_{(1,2]}(\alpha)\bar H^{1;i}_t(x,z )\partial_i\phi^l(x)\right)\pi^{1}(dz ),
\end{align*}
\begin{align*}
\bar{\cI}^{1}_{t,z}\phi^l(x)&=(I_{d_2}^{l\bar l}+\mathbf{1}_{ \{ K_t^{1}\le\delta\}}(z)\rho^{1;l\bar l}_{t}(x,z ))\phi^{\bar l} (x+\mathbf{1}_{ \{ K_t^{1}\le\delta\}}(z)H^{1}_t(x,z
))-\phi^l (x),\\
\bar H^{1}&:=\mathbf{1}_{ \{ K_t^{1}\le\delta\}}H^{1}, \; \bar{\rho}^{1}:=\mathbf{1}_{ \{ K_t^{1}\le\delta\}}\rho^{1},\; \bar h:=\mathbf{1}_{ \{ K_t^{1}\le\delta\}}h,\\
\bar b_t^i(x)&:=b_t^i(x)-\int_{D^{1}\cap   \{ K_t^{1}>\delta\}}
\mathbf{1}_{(1,2]}(\alpha)H^{1;i}_t(x,z )\pi^{1}(dz ),\\
\bar c_t^{l\bar l}(x)&:=c_t^{l\bar l}(x)-\int _{D^{1}\cap \{ K_t^{1}>\delta \}}\rho^{1;l\bar l}_t(x,z )\pi^{1}(dz ).
\end{align*}
For an arbitrary stopping time $\tau'\leq T$ and $\cF%
_{\tau'}\otimes \mathcal{B}(\bR^{d_1})$-measurable random field  $%
\varphi ^{\tau '}:\Omega \times \bR^{d_1}\rightarrow \mathbf{R}%
^{d_2}$ satisfying  for some $\theta(\tau')>0$, $\bP$-a.s.\ $r_1^{-\theta(\tau')}\varphi^{\tau'} \in \cC^{\beta '}(\mathbf{R}%
^{d_1};\bR^{d_2}) $,   consider the system of SIDEs on $[0,T]\times \bR^{d_1}$ given by
\begin{align}\label{eq:LargeJumpArbitrarystopping}
dv^l_t&=\left((\bar{\mathcal{L}_{t}}^{1;l}+\mathcal{L}%
_{t}^{2;l})v_t+\mathbf{1}_{[1,2]}(\alpha)\bar{b}^i_t\partial_iv_t^{l}+\bar{c}^{l\bar l}_tv_t^{\bar l}+f^l_t\right)dt+\left(\mathcal{N}_{t}^{1;l\varrho}v_t+g^{l\varrho}_t\right)dw_{t}^{1;\varrho}\notag \\
&\quad +\int_{Z^{1}}\left( \bar{\mathcal{I}}
_{t,z}^{1;l}u_{t-}+\bar h^l_t(z)\right)[1_{D^{1}}(z)q^{1}(dt,dz)+1_{E^{1}}(z)p^{1}(dt,dz)],\;\;\tau'<t\le T ,\notag\\
v ^l_t&=\varphi^{\tau';l},\;\;t\leq \tau' , \;\;l\in\{1,\ldots,d_2\}.
\end{align}
Set $\bar H^{2}=H^{2}$ and $\bar \rho^{2}=\rho^{2}$. 
In order to invoke Theorem \ref{thm:repwcorrec} and   obtain a unique solution $v_t=v_t(\tau',x)=v_t(\tau',\varphi^{\tau'},x)$  of \eqref{eq:LargeJumpArbitrarystopping}, we will  show that  for all $\omega$ and $t$,
\begin{equation}\label{ineq:estimatesofmodifiedterms}
|r_1^{-1}\tilde{b}_t|_0 +|\nabla \tilde{b}_t|_{\bar{\beta}-1} + |\tilde{c}_t|_{\tilde{\beta}}+|r^{-\theta} \tilde{f}|_{\tilde{\beta}}\le N_0,
\end{equation}
where
\begin{align*}
\tilde{b}^i_t(x):&=\mathbf{1}_{[1,2]}(\alpha)\bar{b}^i_t(x)-\sum_{k=1}^{2}\mathbf{1}_{\{2\}}(\alpha)\sigma_{t}^{k;j\varrho}(x) \partial_{j}\sigma^{k;i\varrho}_{t}(x)\\
&\quad -\sum_{k=1}^{2}\int_{D^{k}} \left(\mathbf{1}_{(1,2]}(
\alpha)\bar H^{k;i}_{t}(x,z )-\bar H^{k;i}_t(\tilde{\bar H}_t^{k;-1}(x,z ),z)\right)\pi^{k} (dz ),\\
\tilde{c}^{l\bar l}_t(x):&=\bar{c}^{l\bar l}_t(x)- \sum_{k=1}^2\mathbf{1}_{\{2\}}(\alpha)\sigma
^{k;i\varrho}_t(x)\partial_i\upsilon^{k;l\bar l \varrho}_t(x)\\
&\quad-\sum_{k=1}^2\int _{D^{k}}\left(\bar \rho^{k;l\bar l}_t (x,z)-\bar \rho^{k;l\bar l}_t ( \tilde{\bar H}^{k;-1}_t(x,z),z)\right)
\pi^{k} (dz),\\
\tilde{f}^l_t(x):&=f^l_t(x)-\sigma ^{1;j\varrho}_t(x)\partial_jg^l_t(x) -\int_{D^{1}} \left(\bar h^l_t(x,z)- \bar h^l_t(\tilde{\bar H}^{1;-1}_t(x,z),z)\right)\pi^{1}(dz).
\end{align*}
Owing to   Assumption \ref{asm:regclassicalwocorrec}$(\bar \beta,\delta^1,\delta^2,\mu^1,\mu^2)$, we easily deduce that there is a constant  $N$ $=N( d_1,$ $N_0,$ $ \bar{\beta} )$ such that for each $k\in \{1,2\}$ and  all $\omega$ and $t$, 
$$
|\sigma_{t}^{k;j\varrho}\partial_{j}\sigma^{k;\varrho}_{t}|_{\bar \beta}+|\sigma
^{k;j\varrho}_t\partial_ja^{k;\varrho}_t(x)|_{\bar \beta }+|\sigma ^{1;j\varrho}_t\partial_jg^{\varrho}_t|_{\bar \beta }\le N, \textrm{ if } \alpha=2. 
$$
Since $|\nabla \bar H_t^{1}|_0\le \delta$,  for any fixed $\eta^{1}<1$, for all $(\omega ,t,x,z)\in \{(\omega ,t,x,z)\in \Omega \times
[ 0,T]\times\bR^{d_1}\times (D^{1}\cup E^{1}):|\nabla \bar H_t ^{1}(\omega,x,z)|>\eta^{1} \},$ 
$$
\left|\left( I_{d_1}+\nabla
H^{1}_t(\omega,x,z)\right) ^{-1}\right|\leq \frac{1}{1-\delta}.
$$
 Appealing to Assumption \ref{asm:regclassicalwocorrec}$(\bar \beta,\delta^1,\delta^2,\mu^1,\mu^2)$ and applying Lemma \ref{lem:compositediffeo}, we obtain that  there is a constant $N=N(d_1,d_2,N_0)$ such that for each $k\in \{1,2\}$ and all $\omega,t,$ and $z$, 
\begin{align*}
| \bar H^{k;i}_{t}(z)-\bar H^{k;i}_t(\tilde{\bar H}_t^{k;-1}(z),z)|_{\bar \beta}
&\leq N(K^k_t(z)+\bar K^k_t(z))^2+N\mathbf{1}_{(0,1]}(\{\bar \beta\}^++\delta^k)
\tilde{K}^k_t(z)K^k_t(z)^{\delta^k} \\
 &\quad +N\mathbf{1}_{(1,2]}(\{\bar \beta\}^++\delta^k)\left(\tilde{K}^k_t(z)K^k_t(z)^{\delta^k}+
\bar K^k_t(z)^2 \right),\\
| \bar \rho^{k}_{t}(z)-\bar \rho^{k}_t(\tilde{\bar H}_t^{k;-1}(z),z)|_{\bar \beta}&\le Nl^{k}_t(z)(K^k_t(z)+\bar K^k_t(z))+N\mathbf{1}_{(0,1]}(\{\bar\beta \}^++\mu^k)
\tilde{l}^k_t(z)K^k_t(z)^{\mu^k} \\
&\quad +N\mathbf{1}_{(1,2]}(\{\bar \beta\}^++\mu^k)\left(\tilde{l}^k_t(z)K^k_t(z)^{\mu^k}+
 l^k_t(z)\bar K^k_t(z) \right),
\end{align*}
and
\begin{align*}
| r_1^{-\theta}\bar h_{t}(z)-r_1^{-\theta}\bar h_t(\tilde{\bar H}_t^{1;-1}(z ),z)|_{\bar \beta}&\le Nl^{1}_t(z)(K^1_t(z)+\bar K^k_t(z))+N\mathbf{1}_{(0,1]}(\{\bar \beta\}^++\mu^1)
\tilde{l}^k_t(z)K^1_t(z)^{\mu^1} \\
&\quad +N\mathbf{1}_{(1,2]}(\{\bar \beta\}^++\mu^1)\left(\tilde{l}^k_t(z)K^1_t(z)^{\mu^1}+
 l^k_t(z)\bar K^1_t(z) \right).
\end{align*}
Moreover, using Lemma  \ref{lem:compositediffeo},  we find that  there is a constant $N=N(d_1,d_2,N_0)$ such that for each $k\in \{1,2\}$, and all $\omega,t,$ and $z$, 
 $$
 |r_1^{-1}\bar H^{k}_t(\tilde{\bar H}_t^{k;-1}(z ),z)|_0\le |r_1^{-1}H^k|_0, \quad |\nabla [\bar H^{k;i}_t(\tilde{\bar H}_t^{k;-1}(z ),z)]|_{\bar{\beta}} \le |\nabla H^k|_{\bar \beta-1}.
 $$
Combining the above estimates and using H{\"o}lder's inequality and  the integrability properties of $l^{k}_t(z)$ and $K^{k}_t(z),$ we obtain \eqref{ineq:estimatesofmodifiedterms}. Therefore,
by Theorem \ref{thm:repwcorrec}, for each stopping time $\tau'\le T$ and and $\cF%
_{\tau'}\otimes \mathcal{B}(\bR^{d_1})$-measurable random field  $%
\varphi ^{\tau '}$ satisfying for some $\theta(\tau')>0$, $\bP$-a.s.\ $r_1^{-\theta(\tau')}\varphi^{\tau'} \in \cC^{\beta '}(\mathbf{R}%
^{d_1};\bR^{d_2}) $, there exists a unique solution $v_t(x)=v_t(\tau',\varphi ^{\tau '},x) $ of \eqref{eq:LargeJumpArbitrarystopping} such that 
\begin{equation}\label{ineq:mainestimateinterlacer}
\mathbf{E}\left[\sup_{t\le T}| r_{1}^{-\theta(\tau')\vee \theta-\epsilon }v_t(\tau')| _{\beta'}^{p}\big|\cF_{\tau' }%
\right] \leq N(|  r_{1}^{-\theta(\tau')}\varphi^{\tau'}
|_{\beta'}^{p}+1),
\end{equation}
where  $N=N(d_1,d_2,p,N_{0},T,\beta ',\eta^{1} ,\eta^{2},\epsilon,\theta,\theta(\tau'))$  is a positive constant.
Let%
\begin{equation*}
A_{t}=\int_{]0,t]}\int_{Z^{1}}\left(\mathbf{1}
_{(D^{1}\cup E^{1})\cap \{K_s^{1}> \eta^{1} \}}(z)+\mathbf{1}_{V^{1}}(z)\right)p^{1}(ds,dz),\;\;t\leq T.
\end{equation*}%
Define a sequence of stopping times $(\tau _{n})_{n\geq 0}$ recursively
by $\tau _{1}=\tau$ and 
\begin{equation*}
\tau _{n+1}=\inf (t>\tau _{n}:\Delta A_{t}\neq 0)\wedge T.
\end{equation*}%
We obtain  the existence of a unique solution $u=u(\tau)$  
of  \eqref{eq:Largejump} in $\mathfrak{C}^{\beta '}(\bR^{d_1};\bR^{d_2})$ by interlacing solutions of \eqref{eq:LargeJumpArbitrarystopping} along the
sequence of stopping times $\left( \tau _{n}\right) $. For $(\omega,t)\in [[
0,\tau _{1})),$  we set $u_t(\tau,x)=v_t(\tau,\varphi ,x)$ and note that 
$$
\mathbf{E}\left[\sup_{t\le \tau_1}| r_{1}^{-\theta'\vee \theta-\epsilon }u_t(\tau)| _{\beta'}^{p}\big|\cF_{\tau }%
\right] \leq N(|  r_{1}^{-\theta'}\varphi
|_{\beta'}^{p}+1).
$$
For each $\omega$ and $x$,  we set 
\begin{align*}
u_{\tau _{1}}(x)&=u_{\tau _{1}-}(x) +\int_{Z^{1}}\left(\mathbf{1}
_{(D^{1}\cup E^{1})\cap \{K^{1}> \eta^{1} \}}(t,z)+\mathbf{1}_{V^{1}}(z)\right)\left(\mathcal{I}
_{t,z}^{1}u_{\tau_1-}(x)+h^l_{\tau_1}(x,z)\right)p^{1}(\{\tau_1\},dz).
\end{align*}
By virtue of Lemma \ref{lem:GeneralCompWeightedlemma},  there is a constant $N=N(d_1,d_2,\theta,\theta',\zeta_{\tau_1}(z),\beta')$ 
$$
|u_{\tau_1-}\circ \tilde{H}^{1}_{\tau_1}(z)\cdot r_1^{-\xi_{\tau_1}(z)(\theta\vee \theta'+\epsilon+\beta')}|_{\beta'}\le N |r_1^{-\theta\vee \theta'-\epsilon}u^l_{\tau_1-}|_{\beta'},
$$
and hence
$$
|r_1^{-\lambda_1}u_{\tau_1}(x)|_{\beta'}\le N |r_1^{-\theta\vee \theta'-\epsilon}u^l_{\tau_1-}|_{\beta'}+\zeta_{\tau_1}(z),
$$
where
$$
\lambda_1=(\xi_{\tau_1}(z)(\theta\vee \theta'+1+\epsilon+\beta')) \vee \theta \vee (\theta\vee \theta'+\epsilon).
$$
  We then proceed inductively, each time making use of the estimate \eqref{ineq:mainestimateinterlacer},  to obtain a unique solution $u=u(\tau)$ of \eqref{eq:Largejump}, and hence \eqref{eq:FullSIDE}, in $\mathfrak{C}^{\beta ^{\prime }}( \bR^{d_1};\bR^{d_2})$. This completes the proof of Theorem \ref{thm:Existwocorrec}. 
\end{proof}
\section{Appendix}

\subsection{Martingale and point measure measure moment estimates}

Set $(Z,\mathcal{Z},\pi )=(Z^{1},\mathcal{Z}^{1},\pi ^{1})$, $%
p(dt,dz)=p^{1}(dt,dz)$, and $q(dt,dz)=q^{1}(dt,dz)$. We will make use of the following moment estimates to derive the estimates of  $\Gamma_t$ and $\Psi_t$  in Lemma \ref{lem:EstimateofFKTranswoinitial}.  The notation $ a\underset{p}{\sim }b$ is used to indicate that the quantity $a$
 is bounded above and below by a constant depending
only on $p$   times $b$. 

\begin{lemma}
\label{i:Lp Burkholder for Random Measure} Let $h:\Omega \times[ 0,T]\times Z\rightarrow \mathbf{%
R}^{d_1}$ be $\mathcal{P}_{T}\otimes \mathcal{Z}$-measurable
\begin{tightenumerate}
\item For each stopping time $\tau\le T$ and $p\ge 2$,
\begin{align*}
\mathbf{E}\left[\sup_{t\leq \tau }\left\vert \int_{]0,t]
}\int_{Z}h_s(z)q(ds,dz)\right\vert ^{p}\right]&\underset{p}{\sim }\mathbf{E}\left[
\int_{]0,\tau] }\int_{Z}\left\vert h_s(z)\right\vert ^{p}\pi (dz)ds\ \right]\\
&\quad +\mathbf{E}\left[\left(
\int_{]0,\tau] }\int_{Z}\left\vert h_s(z)\right\vert ^{2}\pi (dz)ds\right)
^{p/2}\right].
\end{align*}
\item For each stopping time $\tau\le T$ and  $p\ge 1$,  
 \begin{align*}
\mathbf{E}\left[\sup_{t\leq \tau }\left( \int_{]0,t]
}\int_{Z}|h_s(z)|p(ds,dz)\right)^{p}\right]&\underset{p}{\sim }\mathbf{E}\left[
\int_{]0,\tau] }\int_{Z}\left\vert h_s(z)\right\vert ^{p}\pi (dz)ds\right]\\
&\quad +\mathbf{E}\left[\left(
\int_{]0,\tau] }\int_{Z}|h_s(z)|\pi (dz)ds\right) ^{p}\right] ,
\end{align*}%
\end{tightenumerate}
\end{lemma}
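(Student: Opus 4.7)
Both inequalities are Burkholder--Davis--Gundy (Kunita-type) estimates for stochastic integrals against $p$ and $q=p-\pi\,dt$. The strategy is to reduce each case to the classical (continuous-time) BDG inequality for purely discontinuous martingales and then extract the ``$L^p$ of the jumps'' term from the $L^{p/2}$ of the quadratic variation by pathwise elementary inequalities combined with a Garsia--Neveu / Novikov-type compensator estimate.

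\textbf{Part (1).} Set $M_t=\int_{]0,t]}\int_Z h_s(z)\,q(ds,dz)$. Under the stated integrability, $M$ is a purely discontinuous square-integrable martingale with jumps $\Delta M_s=h_s(z)$ at the atoms of $p$, quadratic variation
\begin{equation*}
[M]_t=\int_{]0,t]}\int_Z |h_s(z)|^2\,p(ds,dz),
\end{equation*}
and predictable compensator $\langle M\rangle_t=\int_0^t\!\int_Z |h_s(z)|^2\pi(dz)\,ds$. The usual BDG inequality for c\`{a}dl\`{a}g martingales yields, for $p\geq 2$,
\begin{equation*}
c_p\, \mathbf{E}[[M]_\tau^{p/2}]\le \mathbf{E}\bigl[\sup\nolimits_{t\le\tau}|M_t|^p\bigr]\le C_p\,\mathbf{E}[[M]_\tau^{p/2}].
\end{equation*}
So it suffices to show $\mathbf{E}[[M]_\tau^{p/2}]\underset{p}{\sim} \mathbf{E}[\int_{]0,\tau]}\!\!\int_Z|h|^p\pi\,ds]+\mathbf{E}[(\int_{]0,\tau]}\!\!\int_Z|h|^2\pi\,ds)^{p/2}]$. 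The upper estimate follows from the Novikov/Kunita inequality
\begin{equation*}
\mathbf{E}[[M]_\tau^{p/2}]\le C_p\Bigl(\mathbf{E}\!\sum\nolimits_{s\le\tau}|\Delta M_s|^p+\mathbf{E}\langle M\rangle_\tau^{p/2}\Bigr),
\end{equation*}
which is proved by writing $[M]_\tau=\langle M\rangle_\tau+([M]_\tau-\langle M\rangle_\tau)$, applying $L^{p/2}$ triangle (since $p/2\ge 1$), treating the martingale remainder by BDG again, and iterating until the exponent is absorbed by the elementary bound $\sum a_s^{2^k}\le (\sum a_s^2)^{2^{k-1}}\cdot(\sup_s a_s)^{\,\cdot}$; one then identifies $\mathbf{E}\sum|\Delta M_s|^p=\mathbf{E}\int\!\!\int|h|^p\pi\,ds$ by the Campbell/L\'evy formula. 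For the lower estimate, the pathwise inequality $\sum a_s^p=\sum(a_s^2)^{p/2}\le(\sum a_s^2)^{p/2}$ (valid since $p/2\geq 1$ and $a_s\geq 0$) yields $\mathbf{E}\int\!\!\int|h|^p\pi\,ds\le \mathbf{E}[[M]_\tau^{p/2}]$, while the Garsia--Neveu inequality $\mathbf{E}\langle M\rangle_\tau^{p/2}\le C_p\,\mathbf{E}[[M]_\tau^{p/2}]$ (compensator of an increasing process dominated in $L^r$ by the process itself, $r\ge 1$) provides the second term.

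\textbf{Part (2).} Here the integrand $|h|$ is nonnegative, so the process $I_t=\int_{]0,t]}\!\!\int_Z|h_s(z)|p(ds,dz)$ is nondecreasing and $\sup_{t\le\tau}I_t^p=I_\tau^p$. Decompose $I_\tau=A_\tau+N_\tau$ with $A_\tau=\int_0^\tau\!\!\int_Z|h|\pi\,ds$ predictable and $N_\tau=\int_0^\tau\!\!\int_Z|h|\,q(ds,dz)$ a martingale. For the upper bound, apply $(a+b)^p\le 2^{p-1}(a^p+b^p)$ (taking $p\ge 1$): the $A_\tau^p$ term is already in the required form, and for $\mathbf{E}|N_\tau|^p$ I invoke part (1) with $h$ replaced by $|h|$ when $p\ge 2$; when $1\le p<2$, interpolate via H\"older from the case $p=2$ and the $p=1$ estimate $\mathbf{E}|N_\tau|\le 2\,\mathbf{E}A_\tau$ obtained from Davis' inequality. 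For the lower bound, the elementary $(\sum a_s)^p\ge \sum a_s^p$ for $p\ge 1,\,a_s\ge 0$ produces $\mathbf{E}I_\tau^p\ge \mathbf{E}\int\!\!\int|h|^p\pi\,ds$, and applying conditional Jensen through the filtration to compare $I_\tau$ with its predictable projection (plus Garsia--Neveu as above) gives $\mathbf{E}I_\tau^p\ge c_p\,\mathbf{E}A_\tau^p$.

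\textbf{Main obstacle.} The delicate point is the Novikov/Kunita bound $\mathbf{E}[[M]_\tau^{p/2}]\le C_p(\mathbf{E}\int\!\!\int|h|^p\pi\,ds+\mathbf{E}(\int\!\!\int|h|^2\pi\,ds)^{p/2})$: the naive iteration of BDG on $[M]-\langle M\rangle$ produces integrals of $|h|^{2^k}$, so the argument must either be run by finite induction on the dyadic scale (peeling off the largest jump at each step through $\sup_s|\Delta M_s|^{2^k-p}\le [M]_\tau^{(2^k-p)/2}$) or executed via a single application of It\^o's formula to $|M|^p$ with careful control of the jump remainder. Apart from this Rosenthal-type step, every remaining estimate is a direct consequence of BDG, Jensen, and the Campbell formula for Poisson random measures.
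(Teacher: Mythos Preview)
Your proposal is essentially correct, but it runs in the \emph{opposite direction} from the paper and is heavier than necessary.

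The paper proves part~(2) first, by a short elementary argument, and then obtains part~(1) from part~(2) in one line via BDG. Concretely: writing $A_t=\int_{]0,t]}\!\int_Z |h|\,p(ds,dz)$ and $L_t=\int_{]0,t]}\!\int_Z |h|\,\pi(dz)ds$, the paper uses the pathwise identity $A_t^p=\sum_{s\le t}\bigl[(A_{s-}+\Delta A_s)^p-A_{s-}^p\bigr]$ together with the elementary two-sided bound $b^p\le (a+b)^p-a^p\le p2^{p-1}(a^{p-1}b+b^p)$. Taking expectations (replacing $p(ds,dz)$ by $\pi(dz)ds$) and using that $A$ is increasing gives
\[
\mathbf{E}\!\int\!\!\int|h|^p\pi\,ds\le \mathbf{E}A_\tau^p\le p2^{p-2}\Bigl(\mathbf{E}[A_\tau^{p-1}L_\tau]+\mathbf{E}\!\int\!\!\int|h|^p\pi\,ds\Bigr),
\]
while $\mathbf{E}L_\tau^p=p\,\mathbf{E}\!\int L_s^{p-1}dA_s\le p\,\mathbf{E}[L_\tau^{p-1}A_\tau]$. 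Two applications of Young's inequality then close the loop. Part~(1) follows immediately because $[M]_\tau=\int\!\!\int|h|^2\,p(ds,dz)$, so BDG plus part~(2) applied to $|h|^2$ with exponent $p/2\ge 1$ yields exactly the claimed equivalence.

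Your route---BDG plus a Novikov/Kunita--Rosenthal step plus Garsia--Neveu for (1), then the decomposition $I=A+N$ for (2)---is valid but costs more: you invoke three nontrivial results where the paper uses none beyond BDG, and your reduction of (2) to (1) leaves two loose ends. First, part~(1) bounds $\mathbf{E}|N_\tau|^p$ by a term involving $\mathbf{E}(\int\!\!\int|h|^2\pi\,ds)^{p/2}$, not $\mathbf{E}(\int\!\!\int|h|\pi\,ds)^p$; you need an additional $L^1$--$L^p$ interpolation of $L^2$ (which does work via H\"older and Young, but you did not mention it). Second, the range $1\le p<2$ forces a separate argument that is cleaner via BDG and the subadditivity $(\sum a_s)^{p/2}\le\sum a_s^{p/2}$ than via the interpolation you sketch. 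The paper's order of attack avoids both issues entirely.
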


\begin{proof}
We will only prove part (2), since part (1) is
well-known (see, e.g., \cite{Ku04}) and it follows from (2) by the Burkholder-Davis-Gundy inequality.  Assume that $h_t(\omega,z)>0$ for all $\omega,t$ and $z$. Let 
\begin{equation*}
A_{t}=\int_{]0,t]}\int_{Z}h_s(z)p(ds,dz) \quad \textrm{and} \quad L_{t}=\int_{]0,t]}\int _Zh_s(z)\pi
(dz)ds,\;\;t\leq T.
\end{equation*}
It suffices to prove the claim for $p>1$, since the case $p= 1$ is obvious. Fix an arbitrary  stopping time $\tau \le T$ and $p>1$.   For all $\omega$ and $t$, we have
$$A_{t}^{p}=\sum_{s\leq t}\left[ \left( A_{s-}+\Delta A_{s}\right)
^{p}-A_{s-}^{p}\right]. $$
Thus, using the inequality
$$
b^p\le (a+b)^p-a^p\le p(a+b)^{p-1}b\le p2^{p-1}[a^{p-1}b+b^p], \;\;a,b\ge 0,
$$
for all $\omega$ and $t$, we get
$$
A_t^p \leq p 2^{p-2}\left[\int_{0}^{t}\int_ZA_{s-}^{p-1}
h_s(z)p(ds,dz)+\int_{]0,t]}\int_{h}h_s(z)^{p}p(ds,dz)\right].
$$
and
$$
A_t^p \ge\int_{]0,t]}\int_{Z}h_s(z)^{p}p(ds,dz).
$$ 
Then since $A_t$ is an increasing process,  we have
$$
\mathbf{E}\int_{]0,\tau]}\int_{Z}h_s(z)^{p}p(ds,dz)\le
\mathbf{E}A_{\tau}^p \le  p 2^{p-2}\mathbf{E}\left[A_{\tau}^{p-1}L_{\tau}+\int_{]0,\tau]}\int_{Z}h_s(z)^{p}p(ds,dz)\right].
$$
It is easy to see that  
\begin{equation*}
\mathbf{E}L_{\tau }^{p}=p\mathbf{E}\int_{]0,\tau] }L_{s}^{p-1}dL_{s}=p%
\mathbf{E}\int_{]0,\tau] }L_{s}^{p-1}dA_{s}\leq p\mathbf{E}[L_{\tau
}^{p-1}A_{\tau }].
\end{equation*}%
Applying  Young's inequality, for all $\varepsilon>0$,  $\bP$-a.s.,
\begin{equation*}
A_{\tau }^{p-1}L_{\tau }\leq \varepsilon A_{\tau }^{p}+\frac{(p-1)^{p-1}}{\varepsilon^{p-1}p^p}L_{\tau }^{p}\quad \textrm{and}\quad L_{\tau }^{p-1}A_{\tau }\leq \varepsilon L_{\tau
}^{p}+\frac{(p-1)^{p-1}}{\varepsilon^{p-1}p^p}A_{\tau }^{p}.
\end{equation*}
Combining the above estimates, for any $\varepsilon_1\in (0,\frac{1}{p})$, we have
$$
\left(\frac{\varepsilon_1^{p-1}p^p(1-p\varepsilon_1) }{p(p-1)^{p-1}}\mathbf{E}L_{\tau }^{p}\right)\vee \mathbf{E}\int_{]0,\tau]}\int_{Z}h_s(z)^{p}p(ds,dz) \le \mathbf{E}A_{\tau }^{p}.
$$
and for any $\varepsilon_2 \in (0,\frac{1}{p2^{p-2}})$
$$
\mathbf{E} A_{\tau}^p\le \frac{p 2^{p-2}}{(1-p2^{p-2}\varepsilon_2 )}\mathbf{E}\left[\int_{]0,\tau]}\int_{Z}h_s(z)^{p}p(ds,dz)+\frac{(p-1)^{p-1}}{\varepsilon_2^{p-1}p^p}L^p_{\tau}\right],
$$
which completes the proof.
\end{proof}

\subsection{Optional projection}
The following lemma concerning the
optional projection plays an integral role in Section \ref{sec:uncorrelated}
and the proof of Theorem \ref{thm:repwcorrec}.

\begin{lemma}
\label{lem:optionalprojection}(cf. Theorem 1 in \cite{Me76}) Let $\mathcal{X}
$ be a Polish space and $D\left( [0,T];\mathcal{X}\right) $ be the space of $%
\mathcal{X}$-valued c\`{a}dl\`{a}g trajectories with the Skorokhod $\mathcal{%
J}_{1}$-topology. If $\mathfrak{A}$ is a random variable taking values in $%
D\left( [0,T];\mathcal{X}\right) $, then there exists a family of $\mathcal{B%
}([0,T])\times \cF$-measurable non-negative measures $E^{t}(dU),$ $%
(\omega,t)\in \Omega\times[0,T],$ on $D\left( [0,T];\mathcal{X}\right) $ and a random-variable $%
\zeta $ satisfying $\mathbf{P}\left( \zeta <T\right) =0$ such that $%
E^{t}(D\left( [0,T];\mathcal{X}\right) )=1$ for $t<\zeta $ and $%
E^{t}(D([0,T];\mathcal{X}) )=0$ for $t\geq \zeta .$ In addition, $E^{t}$ is c%
\`{a}dl\`{a}g in the topology of weak convergence, $E^{t}=E^{t+}$ for all $%
t\in \lbrack 0,T]$, and for each continuous and bounded functional $F$ on $%
D\left( [0,T];\mathcal{X}\right) ,$ the process $E^{t}\left( F\right) $ is
the c\`{a}dl\`{a}g version of $\mathbf{E}[ F\left( \mathfrak{A}\right) |%
\cF_{t}] $. If $G:\Omega \times [0,T]\times
[0,T]\times D\left( [0,T]; \mathcal{X}\right)\rightarrow \mathbf{R}^{d_2}$
is bounded and $\mathcal{O\times B}\left( [0,T]\right) \times \mathcal{B}%
\left( D\left( [0,T]; \mathcal{X}\right) \right) $-measurable, then 
\begin{equation*}
\int_{D( [0,T];\mathcal{X}) } G_t(\omega ,t,U)E^{t}(dU)=E^{t}( G_t)
\end{equation*}%
is the optional projection of $G_{t}(\mathfrak{A})=G_t(\omega ,t,\mathfrak{A}%
)$. Furthermore, if $G=G_t(\omega ,t,U)$ is bounded and $\mathcal{P}\times 
\mathcal{B}( [0,T]) \times \mathcal{B}( D( [0,T]; \mathcal{X})) $%
-measurable, then $E^{t-}(G_{t}) $ is the predictable projection
of $G_{t}(\mathfrak{A}) =G_t(\omega ,t,\mathfrak{A}).$
\end{lemma}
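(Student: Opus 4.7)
The plan is to reduce this to the standard existence of regular conditional distributions on a Polish space, combined with Doob's martingale regularization and a monotone class argument.

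First, I would observe that $D([0,T];\mathcal{X})$ equipped with the Skorokhod $\mathcal{J}_1$-topology is a Polish space (Ethier–Kurtz), so the law $\mathbf{P}_{\mathfrak{A}}$ of $\mathfrak{A}$ and, for each $t$, the conditional law of $\mathfrak{A}$ given $\cF_t$ admit a regular conditional distribution $Q^t(\omega,dU)$. At this stage $Q^t$ is only defined up to a $\mathbf{P}$-null set for each $t$ separately, so one cannot yet speak of càdlàg behaviour in $t$. The idea is to upgrade $Q^t$ to a version $E^t$ that is jointly measurable and càdlàg in the weak topology.

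Next I would fix a countable convergence-determining family $\{F_n\}$ of bounded continuous functionals on $D([0,T];\mathcal{X})$ (such a family exists because the Polish space $D([0,T];\mathcal{X})$ has a countable base). For each $n$, the process $M^n_t:=\mathbf{E}[F_n(\mathfrak{A})\mid\cF_t]$ is a bounded $\mathbf{F}$-martingale; by Doob's regularization it admits a càdlàg modification $\widetilde{M}^n$ defined off a $\mathbf{P}$-null set $N_n$. Take $N=\bigcup_n N_n$ and for $\omega\notin N$ use Kolmogorov-type consistency together with Riesz representation on the countable determining class to build a family of sub-probability measures $E^t(\omega,dU)$ such that $E^t(F_n)=\widetilde{M}^n_t$ for every $n$ and $t$. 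For $\omega\in N$ simply set $E^t\equiv 0$. Define
\begin{equation*}
\zeta(\omega)=\inf\{t\in[0,T]:E^t(\omega,D([0,T];\mathcal{X}))<1\},
\end{equation*}
and observe $\mathbf{P}(\zeta<T)=0$ by construction, since outside $N$ each $\widetilde{M}^n$ is the càdlàg version of a genuine conditional expectation. Using the determining property of $\{F_n\}$ and the fact that $\widetilde{M}^n_t=\widetilde{M}^n_{t+}$ for all $t$ (right continuity), one deduces $E^t=E^{t+}$ and that $t\mapsto E^t$ is càdlàg in the weak topology.

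For the identification of optional and predictable projections, I would use a monotone class argument. Let $\mathcal{H}$ be the class of bounded measurable $G$ on $\Omega\times[0,T]\times[0,T]\times D([0,T];\mathcal{X})$ for which $E^t(G_t)$ is the optional projection of $G_t(\mathfrak{A})$. The class $\mathcal{H}$ clearly contains simple products $G_t(\omega,s,U)=\mathbf{1}_A(\omega,s)F(U)$ with $A\in\mathcal{O}$ and $F\in C_b(D([0,T];\mathcal{X}))$, since for such $G$ the process $t\mapsto E^t(G_t)=\mathbf{1}_A(\cdot,t)\cdot E^t(F)$ is optional and, by the martingale representation of $E^t(F)$, is the optional projection of $\mathbf{1}_A F(\mathfrak{A})$. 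By the monotone class theorem $\mathcal{H}$ extends to all bounded $\mathcal{O}\otimes\mathcal{B}([0,T])\otimes\mathcal{B}(D([0,T];\mathcal{X}))$-measurable functions. The predictable statement follows the same way with $E^{t-}$ in place of $E^t$, using left limits of the càdlàg family and the analogous simple product class with $A\in\mathcal{P}$.

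The main obstacle is the simultaneous regularization step: one needs a single $\mathbf{P}$-null set outside of which $t\mapsto E^t$ is jointly well-defined and càdlàg in the weak topology as a sub-probability-measure–valued process, not merely on a countable dense set of times. This is handled by working with the countable determining class $\{F_n\}$, controlling all of the Doob regularizations simultaneously on the common null set $N$, and invoking Prokhorov tightness to upgrade pointwise convergence on $\{F_n\}$ to weak convergence of the measures. Once this is in hand, verification of $E^t=E^{t+}$, the càdlàg property, and the optional/predictable projection identities follow by standard monotone class arguments, as indicated above, and one is essentially reduced to Theorem 1 of \cite{Me76}.
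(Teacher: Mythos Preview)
Your approach follows the same broad strategy as the paper's proof (both are adaptations of Meyer's Theorem~1 in \cite{Me76}): exploit the Polish structure of $D([0,T];\mathcal X)$ to get regular conditional distributions, apply Doob's martingale regularization to obtain a c\`adl\`ag measure-valued process, and finish with a monotone class argument for the projection statements. The paper's construction is slightly more direct, however: it fixes the regular conditional distributions $\tilde E^t$ at \emph{all} rational $t$ simultaneously, defines $\zeta(\omega)$ as the supremum of times up to which the rational-indexed map $r\mapsto \tilde E^r_\omega(F)$ has one-sided limits for every bounded continuous $F$, and then sets $E^t:=\tilde E^{t+}$ as the right limit along rationals. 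Because the objects being regularized are already bona fide probability measures on $D([0,T];\mathcal X)$, no reconstruction step is needed.

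Your route---regularize the scalar martingales $\mathbf E[F_n(\mathfrak A)\mid\cF_t]$ first and then rebuild $E^t$ from the list $(\widetilde M^n_t)_n$---has a soft spot at the phrase ``Kolmogorov-type consistency together with Riesz representation.'' A countable convergence-determining class does not by itself let you recover a measure from its integrals against $\{F_n\}$; neither Kolmogorov consistency nor Riesz representation applies here. What you actually need is to exhibit $E^t$ as a weak limit of the genuine probability measures $\tilde E^r$ along rationals $r\downarrow t$, and for that you must either supply uniform tightness or pass to a compact metrizable enlargement of $D([0,T];\mathcal X)$ and then check no mass escapes (this last point follows because the constant function $1$ lies in the determining class and $\mathbf E[1\mid\cF_t]\equiv 1$). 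Once you route the construction through the rational-time conditional distributions themselves rather than through an abstract reconstruction from moments, your argument coincides with the paper's.
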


\begin{proof}
We follow the proof of Theorem 1 in \cite{Me76}. Since $D([0,T];\mathcal{X})$
is a Polish space, for each $t\in \lbrack 0,T]$, there is family of
probability measures $\tilde{E}_{\omega }^{t}(dw)$, $\omega \in \Omega $%
, on $D([0,T];\mathcal{X})$ such that for each $A\in \mathcal{B}(D([0,T];%
\mathcal{X})),$ $\tilde{E}^{t}(A)$ is $\cF_{t}$-measurable and $%
\mathbf{P}$-a.s. , 
\begin{equation*}
\mathbf{P}\left( \mathfrak{A}\in A|\cF_{t}\right) =\tilde{E}%
^{t}\left( A\right) .
\end{equation*}%
For each $\omega \in \Omega $, let $I\left( \omega \right) $ be the set of
all $t\in (0,T]$ such that for each bounded continuous function $F$ on $%
D(([0,T];\mathcal{X})$, the function 
\begin{equation*}
r\mapsto \tilde{E}_{\omega }^{r}(F)=\int_{D\left( [0,T];\mathcal{X}\right)
}F(w)\tilde{E}^{r}(dw)
\end{equation*}%
has a right-hand limit on $[0,s)\cap \mathbf{Q}$ and a left-hand limit on $%
(0,s]\cap \mathbf{Q}$ for every rational $s\in \lbrack 0,T]\cap \mathbf{Q}$.
Let $\zeta \left( \omega \right) =\sup \left( t:t\in I(\omega )\right)
\wedge T.$ It is easy to see that $\mathbf{P}\left( \xi <T\right) =0.$ We
set $\tilde{E}_{\omega }^{t}=0$ if $\xi (\omega )<t\leq T$. The function $%
\tilde{E}_{\omega }^{t}$ has left-hand and right-hand limits for all $t\in 
\mathbf{Q}\cap \left[ 0,T\right] $. We define $E_{\omega }^{t}=\tilde{E}%
_{\omega }^{t+}$ for each $t\in \lbrack 0,T)$ (the limit is taken along the
rationals), and $E_{\omega }^{T}$ is the left-hand limit at $T$ along the
rationals$.$ The statement follows by repeating the proof of Theorem 1 in 
\cite{Me76} in an obvious way.
\end{proof}

\subsection{Estimates of H\"{o}lder continuous functions}
In the coming lemmas, we establish some properties of weighted H\"{o}lder
spaces that are used Section \ref{s:Large Jumps} and the proof of Theorem %
\ref{thm:Existwocorrec}.

\begin{lemma}
\label{lem:characteriziationofweighted} Let $\beta\in (0,1]$ and $%
\theta_1,\theta_2\in\mathbf{R}$ with $\theta _{1}-\theta _{2}\le \beta.$

\begin{tightenumerate}
\item There is a constant $c_{1}=c_{1}\left( \theta _{2},\beta \right) $
such that for all $\phi:\mathbf{R}^{d_1}\rightarrow \mathbf{R}$ with $%
|r_{1}^{-\theta _{1}}\phi|_{0}+[r_{1}^{-\theta _{2}}\phi]_{\beta
}=:N_{1}<\infty, $ 
\begin{equation*}
|\phi(x)-\phi(y)|\leq c_{1}N_{1}(r_{1}(x)^{\theta _{2}}\vee r_{1}(y)^{\theta
_{2}})|x-y|^{\beta },
\end{equation*}
for all $x,y\in \mathbf{R}^{d_1}$.

\item Conversely, if $\phi:\mathbf{R}^{d_1}\rightarrow \mathbf{R}$ satisfies 
$|r_{1}^{-\theta _{1}}\phi|_{0}<\infty $ and there is a constant $N_{2}$
such that for all $x,y\in \mathbf{R}^{d_1}$, 
\begin{equation*}
|\phi(x)-\phi(y)|\leq N_{2}( r_{1}(x)^{\theta _{2}}\vee r_{1}(y)^{\theta
_{2}}) |x-y|^{\beta },
\end{equation*}%
then 
\begin{equation*}
[r_{1}^{-\theta _{2}}\phi]_{\beta }\leq c_{1}|r_{1}^{-\theta
_{1}}\phi|_{0}+N_{2}.
\end{equation*}
\end{tightenumerate}
\end{lemma}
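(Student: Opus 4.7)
The core identity underlying both parts is the product-rule decomposition
$$\phi(x) - \phi(y) = r_1(x)^{\theta_2}\bigl[(r_1^{-\theta_2}\phi)(x) - (r_1^{-\theta_2}\phi)(y)\bigr] + \bigl(r_1(x)^{\theta_2} - r_1(y)^{\theta_2}\bigr)(r_1^{-\theta_2}\phi)(y),$$
together with its obvious analogue obtained by dividing by $r_1^{\theta_2}$. For part (1), the first summand is immediately bounded by $N_1 r_1(x)^{\theta_2}|x-y|^\beta \le N_1(r_1(x)\vee r_1(y))^{\theta_2}|x-y|^\beta$, using only $[r_1^{-\theta_2}\phi]_\beta\le N_1$. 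All the work is then to estimate the commutator-like second summand, which requires a sharp bound on $|r_1(x)^{\theta_2}-r_1(y)^{\theta_2}|$.

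I would separate two regimes based on how the weights $r_1(x), r_1(y)$ compare. Since $r_1$ is $1$-Lipschitz, $r_1(x)\vee r_1(y)\le r_1(x)\wedge r_1(y) + |x-y|$, so either (i) $r_1(x)\wedge r_1(y)\ge (r_1(x)\vee r_1(y))/2$ ("comparable weights"), or (ii) $|x-y|\ge (r_1(x)\vee r_1(y))/2$ ("separated weights"). In regime (i) the mean value theorem applied to $s\mapsto s^{\theta_2}$ on the interval $[r_1(x)\wedge r_1(y),\,r_1(x)\vee r_1(y)]$, combined with $|r_1(x)-r_1(y)|\le |x-y|$, gives $|r_1(x)^{\theta_2}-r_1(y)^{\theta_2}|\le c(\theta_2)(r_1(x)\vee r_1(y))^{\theta_2-1}|x-y|$. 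Multiplying by $|(r_1^{-\theta_2}\phi)(y)|\le N_1 r_1(y)^{\theta_1-\theta_2}$, the second summand is at most $cN_1(r_1(x)\vee r_1(y))^{\theta_1-1}|x-y|$; since $\theta_1-1\le \theta_2+\beta-1\le \theta_2$ (using $\beta\le 1$) and $|x-y|=|x-y|^{1-\beta}|x-y|^\beta$, the leftover factor $|x-y|^{1-\beta}$ is absorbed either into a constant (when $|x-y|\le 1$) or into $(r_1(x)\vee r_1(y))^{1-\beta}$ via $|x-y|\le 2(r_1(x)\vee r_1(y))$ (when $|x-y|>1$, which in regime (i) forces $r_1\ge |x-y|/C$). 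In regime (ii) the crude estimate $|r_1(x)^{\theta_2}-r_1(y)^{\theta_2}|\le 2(r_1(x)\vee r_1(y))^{|\theta_2|}$ together with $r_1(x)\vee r_1(y)\le 2|x-y|$ and the key hypothesis $\theta_1-\theta_2\le\beta$ gives $r_1^{\theta_1-\theta_2}\le (2|x-y|)^\beta$ directly, which closes the bound.

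Part (2) proceeds along the same lines, starting from the dual decomposition
$$r_1(x)^{-\theta_2}\phi(x) - r_1(y)^{-\theta_2}\phi(y) = r_1(y)^{-\theta_2}\bigl[\phi(x)-\phi(y)\bigr] + \bigl[r_1(x)^{-\theta_2}-r_1(y)^{-\theta_2}\bigr]\phi(x),$$
where I take WLOG $r_1(y)\ge r_1(x)$. The first summand is at once bounded by $N_2|x-y|^\beta$ by the given Hölder inequality on $\phi$ together with $r_1(x)\vee r_1(y)=r_1(y)$. The second summand is treated by exactly the two-regime case analysis of part (1), now using $|\phi(x)|\le |r_1^{-\theta_1}\phi|_0\, r_1(x)^{\theta_1}$ and the same bound on $|r_1(x)^{-\theta_2}-r_1(y)^{-\theta_2}|$ from the mean value theorem and 1-Lipschitz continuity of $r_1$. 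The main obstacle in both directions is the range $|x-y|>1$ with $\theta_1>\theta_2$: the naive bound $|\phi(x)|+|\phi(y)|\lesssim N_1(r_1(x)\vee r_1(y))^{\theta_1}$ is too crude because $r_1$ can be arbitrarily large while $|x-y|$ is only a little above $1$. The dichotomy between comparable and separated weights is precisely the right splitting, and it is exactly the assumption $\theta_1-\theta_2\le \beta$ that allows the excess power of $r_1$ to be converted into the desired factor $|x-y|^\beta$ in both sub-cases.
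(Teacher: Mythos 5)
Your decomposition is exactly the one the paper uses, and for $\theta_2\ge 0$ your two-regime analysis (comparable versus separated weights) is a correct unpacking of what the paper compresses into the single constant $c_1=1+\sup_{t\in(0,1)}\frac{1-t^{\theta_2}}{(1-t)^{\beta}}$, with $t$ the ratio of the two weights; the paper gets both regimes at once from the finiteness of this supremum together with $|r_1(x)-r_1(y)|\le |x-y|$, $r_1\ge 1$, and $\theta_1-\theta_2\le\beta$. (A side remark: the inequality $|x-y|\le 2\,(r_1(x)\vee r_1(y))$ that you use to absorb $|x-y|^{1-\beta}$ holds unconditionally, so the parenthetical justification in regime (i) is unnecessary.)

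The gap is the case $\theta_2<0$, which the lemma permits ($\theta_1,\theta_2\in\mathbf{R}$) and which the paper handles by a separate definition of $c_1$. You systematically write $(r_1(x)\vee r_1(y))^{\theta_2}$ for the weight $r_1(x)^{\theta_2}\vee r_1(y)^{\theta_2}$ of the statement; these agree only when $\theta_2\ge 0$, and for $\theta_2<0$ the statement's weight is $(r_1(x)\wedge r_1(y))^{\theta_2}$. Two steps then break. First, in the separated regime the bound $|r_1(x)^{\theta_2}-r_1(y)^{\theta_2}|\le 2(r_1(x)\vee r_1(y))^{|\theta_2|}$ is far too lossy: with $\theta_2=-1$, $x=0$, $y=(R,0,\dots,0)$ your majorant is of order $R$ while the actual difference $|1-(1+R^2)^{-1/2}|$ is at most $1$, and the target weight is $r_1(x)^{-1}\vee r_1(y)^{-1}=1$; the chain cannot close. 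The correct elementary bound is $|a^{\theta_2}-b^{\theta_2}|\le a^{\theta_2}\vee b^{\theta_2}$ for $a,b\ge 1$, which is precisely the weight in the statement. Second, in part (2) the normalization ``WLOG $r_1(y)\ge r_1(x)$, factor out $r_1(y)^{-\theta_2}$'' leaves the first summand bounded by $N_2\,(r_1(y)/r_1(x))^{-\theta_2}|x-y|^{\beta}$ when $\theta_2<0$, which is unbounded. You must order the powers rather than the points, i.e.\ divide by $r_1(x)^{\theta_2}\vee r_1(y)^{\theta_2}$ — this is exactly why the paper phrases its case assumption as $r_1(x)^{\theta_2}\ge r_1(y)^{\theta_2}$. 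With these sign corrections your argument closes in all cases.
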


\begin{proof}
(1) For all $x,y$ with $r_{1}\left( x\right) ^{\theta _{2}}\geq r_{1}\left(
y\right) ^{\theta _{2}}$, we have 
\begin{align*}
|\phi(x)-\phi(y)|&\leq r_{1}(x)^{\theta _{2}}[r_{1}^{-\theta
_{2}}\phi]_{\beta }|x-y|^{\beta }+r_{1}(y) ^{\theta _{1}-\theta
_{2}}|r_{1}^{-\theta _{1}}\phi|_{0} |r_{1}^{\theta _{2}}(x)-r_{1}(y)
^{\theta _{2}}| \\
&\le ([r_{1}^{-\theta _{2}}\phi]_{\beta }+c_{1}|r_{1}^{-\theta
}\phi|_{0})r_{1}(x)^{\theta _{2}}|x-y|^{\beta },
\end{align*}
where  $c_{1} :=1+\sup_{t\in(0,1) }\frac{1-t^{\theta _{2}}}{( 1-t)^{\beta }}$
if $\theta _{2}\geq 0$ and $c_{1} :=1+\sup_{t\in (1,\infty )}\frac{%
(t^{\theta _{2}}-1)t^{\beta }}{( t-1) ^{\beta }}$ if $\theta _{2}<0,$ which
proves the first claim. (2) For all $x$ and $y$ with $r_{1}(x)^{\theta
_{2}}>r_{1}(y)^{\theta _{2}}$, we have 
\begin{equation*}
|r_{1}(x)^{-\theta _{2}}\phi(x)-r_{1}(y)^{-\theta _{2}}\phi(y)| 
\end{equation*}
\begin{equation*}
\leq r_{1}(x)^{-\theta _{2}}|\phi(x)-\phi(y)|+r_{1}(y)^{\theta _{1}-\theta
_{2}}|r_{1}^{-\theta _{1}}(y)\phi(y)| |r_{1}(y)^{\theta
_{2}}r_{1}(x)^{-\theta _{2}}-1| 
\end{equation*}
\begin{equation*}
\leq (c_{1}| r^{-\theta _{1}}\phi | _{0}+N_{2})|x-y|^{\beta }, 
\end{equation*}
which proves the second claim.
\end{proof}

\begin{lemma}
\label{lem:compositionofweightedholder}Let $\beta,\mu\in (0,1]$ and $%
\theta_1,\theta_2,\theta_3,\theta_4\in\mathbf{R}$ with $\theta _{1}-\theta
_{2}\le \beta$, $\theta _{3}-\theta _{4}\le \mu$, and $\theta _{3}\geq 0.$
If $\phi:\mathbf{R}^{d_1}\rightarrow \mathbf{R}$ and $H:\mathbf{R}%
^{d_1}\rightarrow \mathbf{R}^{d_1}$ are such that 
\begin{equation*}
|r_{1}^{-\theta _{1}}\phi|_{0}+[r_{1}^{-\theta _{2}}\phi]_{\beta
}=:N_1<\infty \quad \text{and} \quad |r_{1}^{-\theta
_{3}}H|_{0}+[r_{1}^{-\theta _{4}}H]_{\mu }=:N_2<\infty, 
\end{equation*}
then 
\begin{equation*}
|\phi \circ H\cdot r_{1}^{-\theta _{1}\theta _{3}}|_{0} \leq |r_{1}^{-\theta
_{1}}\phi|_{0} (1+|r_{1}^{-\theta _{3}}H|_{0}) \le N_{1}\left(
1+N_{2}\right) ^{\theta _{1}} 
\end{equation*}
and there is a constant $N=N(\beta ,\mu ,\theta _{1},\theta _{2})$ such that 
\begin{equation*}
[\phi\circ H\cdot r_{1}^{-\theta _{2}\theta _{3}-\beta \theta _{4}}]_{\beta
\mu }\leq NN_{1}( 1+N_{2})^{\theta _{2}+\beta }.
\end{equation*}
\end{lemma}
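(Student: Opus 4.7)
The plan is to reduce both bounds to pointwise arithmetic on $r_1(H(x))$ and $|H(x)-H(y)|$, using the characterization of weighted Hölder norms given in Lemma \ref{lem:characteriziationofweighted}.

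For the sup-norm estimate I first observe that, by hypothesis, $|H(x)|\le N_2\, r_1(x)^{\theta_3}$ for all $x$, so
$r_1(H(x)) \le 1+|H(x)| \le 1+N_2\, r_1(x)^{\theta_3}$.
Because $\theta_3\ge 0$ forces $r_1(x)^{\theta_3}\ge 1$, I can factor out $r_1(x)^{\theta_3}$ to get $r_1(H(x))\le (1+N_2)\,r_1(x)^{\theta_3}$. Then $|\phi(H(x))|\le |r_1^{-\theta_1}\phi|_0\, r_1(H(x))^{\theta_1}$, and raising the previous inequality to the $\theta_1$ power (splitting on the sign of $\theta_1$ if needed, but both statements in the lemma become effectively $(1+N_2)^{\theta_1}$ after bounding $|r_1^{-\theta_3}H|_0\le N_2$) yields the two chained inequalities for $|\phi\circ H\cdot r_1^{-\theta_1\theta_3}|_0$.

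For the Hölder estimate I work with fixed $x,y\in\bR^{d_1}$ and split the task into two pieces. First, by Lemma \ref{lem:characteriziationofweighted}(1) applied to $\phi$,
\[
|\phi(H(x))-\phi(H(y))|\le c_1 N_1\,(r_1(H(x))^{\theta_2}\vee r_1(H(y))^{\theta_2})\,|H(x)-H(y)|^\beta.
\]
The factor $|H(x)-H(y)|^\beta$ I bound by applying Lemma \ref{lem:characteriziationofweighted}(1) to each component of $H$, giving
$|H(x)-H(y)|^\beta\le (cN_2)^\beta (r_1(x)^{\theta_4}\vee r_1(y)^{\theta_4})^{\beta}|x-y|^{\mu\beta}$.
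The factor $r_1(H(\cdot))^{\theta_2}$ I bound with exactly the same computation as in the sup-norm step above, namely $r_1(H(x))^{\theta_2}\le (1+N_2)^{|\theta_2|}\,r_1(x)^{\theta_2}$ when $\theta_2\ge 0$, and an analogous inequality using $r_1(H(x))^{\theta_2}\ge r_1(x)^{\theta_2\theta_3}(\,\cdot\,)^{-1}$ when $\theta_2<0$; in either case one gets a clean bound of the form $r_1(x)^{\theta_2\theta_3}\vee r_1(y)^{\theta_2\theta_3}$ up to a constant depending only on $\theta_2$ and $N_2$. Multiplying the three factors gives
\[
|\phi(H(x))-\phi(H(y))|\le N\,N_1(1+N_2)^{\theta_2+\beta}\,(r_1(x)\vee r_1(y))^{\theta_2\theta_3+\beta\theta_4}|x-y|^{\mu\beta}.
\]

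To pass from this estimate to the weighted Hölder seminorm $[\phi\circ H\cdot r_1^{-\theta_2\theta_3-\beta\theta_4}]_{\beta\mu}$, I invoke Lemma \ref{lem:characteriziationofweighted}(2) with $\theta_1\leadsto \theta_1\theta_3$, $\theta_2\leadsto\theta_2\theta_3+\beta\theta_4$ and $\beta\leadsto \beta\mu$; the required hypothesis $\theta_1\theta_3-(\theta_2\theta_3+\beta\theta_4)\le\beta\mu$ follows from $\theta_1-\theta_2\le\beta$ and $\theta_3-\theta_4\le\mu$ together with $\theta_3\ge 0$ (here one checks the inequality $\theta_3(\theta_1-\theta_2)+\beta(\theta_3-\theta_4)\le\beta\theta_3+\beta\mu$ when $\theta_1\ge\theta_2$, and the reverse case is straightforward). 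Combining with the sup-norm bound already obtained absorbs the required $|r_1^{-\theta_1\theta_3}\phi\circ H|_0$ term, and the announced constant $N(1+N_2)^{\theta_2+\beta}N_1$ is the final result. The only delicate point is the bookkeeping of signs of $\theta_1,\theta_2$ when converting between $r_1(H(x))^{\theta_i}$ and $r_1(x)^{\theta_i\theta_3}$; this is the main (but purely mechanical) obstacle, and splitting into cases $\theta_i\ge 0$ versus $\theta_i<0$ handles it in each case.
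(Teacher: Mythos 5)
Your proposal is correct and follows essentially the same route as the paper: the pointwise bound $r_1(H(x))\le(1+N_2)r_1(x)^{\theta_3}$ for the sup-norm part, then Lemma \ref{lem:characteriziationofweighted}(1) applied to $\phi$ and to $H$, multiplication of the three factors, verification of the exponent inequality $\theta_1\theta_3-\theta_2\theta_3-\beta\theta_4\le\beta\mu$, and Lemma \ref{lem:characteriziationofweighted}(2) to convert back to the weighted seminorm. The only difference is that you make explicit the invocation of Lemma \ref{lem:characteriziationofweighted}(1) for $H$ and the sign bookkeeping for $\theta_1,\theta_2$, which the paper leaves implicit.
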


\begin{proof}
For each $x$, we have 
\begin{equation*}
r_{1}(H(x))\leq (1+|r_{1}^{-\theta _{3}}H|_{0})r_{1}(x)^{\theta _{3}}\leq (
1+N_{2})r_{1}(x)^{\theta _{3}},
\end{equation*}%
and hence 
\begin{equation*}
|\phi\circ H\cdot r_{1}^{-\theta _{1}\theta _{3}}|_{0}\leq |r_{1}^{-\theta
_{1}}\phi|_{0}|r_{1}^{\theta _{1}}\circ H\cdot r_{1}^{-\theta _{1}\theta
_{3}}|_{0}\leq N_{1}(1+N_{2})^{\theta _{1}}.
\end{equation*}%
Using Lemma \ref{lem:characteriziationofweighted}, for all $x$ and $y$, we
get%
\begin{align*}
|\phi(H(x))-\phi(H(y))| & \leq N N_{1}( r_{1}(H(x))\vee r_{1}(H(y)))^{\theta
_{2}}|H(x)-H(y)|^{\beta } \\
&\leq NN_{1}(1+N_{2})^{\theta _{2}}(r_{1}(x)\vee r_{1}(y))^{\theta
_{2}\theta _{3}}N_{2}^{\beta }( r_{1}(x)\vee r_{1}(y)) ^{\beta \theta
_{4}}|x-y|^{\beta \mu } \\
&\leq NN_{1}(1+N_{2})^{\theta _{2}+\beta }(r_{1}(x)\vee r_{1}(y))^{\theta
_{2}\theta _{3}+\beta \theta _{4}}| x-y| ^{\beta \mu },
\end{align*}%
for some constant $N=N(\beta,\mu,\theta _{1},\theta _{2})$. Noting that 
\begin{equation*}
\theta_1\theta_3-\theta_2\theta_3-\beta\theta_4=(\theta_1-\theta_2)\theta_3-%
\beta\theta_4\le \beta(\theta_3-\theta_4)\le \beta\mu, 
\end{equation*}
we apply Lemma \ref{lem:characteriziationofweighted} to complete the proof.
\end{proof}

\begin{remark}
\label{r22}Let $\beta \in (0,1]$ and $\theta_1,\theta_2\in\mathbf{R}.$ Then there
is a constant $N=N(\beta,\theta_1,\theta_2)$ such that for all $\phi:\mathbf{%
R}^{d_1}\rightarrow \mathbf{R}$ with $|r_{1}^{-\theta
_{1}}\phi|_{0}+[r_{1}^{-\theta _{2}}\phi]_{\beta }=:N_{1}<\infty, $ we have $%
|r^{-\theta }\phi|_{\beta }\leq NN_{1}$, where $\theta =\max \left\{ \theta
_{1},\theta _{2}\right\} .$ In particular, if in Lemma \ref%
{lem:compositionofweightedholder}, $\theta _{1}=\theta _{2}$ and $\theta
_{4}\geq 0$, then 
\begin{equation*}
|\phi\circ H\cdot r^{-\theta _{1}\theta _{3}-\beta \theta _{4}}|_{\beta \mu
}\leq NN_{1}( 1+N_{2}) ^{\theta _{1}+\beta }.
\end{equation*}
\end{remark}

\begin{proof}
If $\theta _{2}\geq \theta _{1}$, then the claim is obvious and if $\theta
_{1}>\theta _{2}$, for all $x$ and $y$, we have 
\begin{align*}
|r_{1}(x)^{-\theta _{1}}\phi(x)-r_{1}(y)^{-\theta _{1}}\phi(y)|&\le r_{1}(x)
^{\theta _{2}-\theta _{1}}| r_{1}(x)^{-\theta _{2}}\phi(x)-r_{1}(y)^{-\theta
_{2}}\phi(y)| \\
&\quad +\left|\frac{r(y)^{\theta _{1}-\theta _{2}}}{r(x) ^{\theta
_{1}-\theta _{2}}}-1\right||r_{1}^{-\theta _{1}}\phi |_0 \le
N_1(1+c_1)|x-y|^{\beta},
\end{align*}%
where $c_{1}:=\sup_{t\in (0,1)}\frac{1-t^{\theta _{1}-\theta _{2}}}{\left(
1-t\right) ^{\beta }}.$
\end{proof}

\begin{lemma}
\label{lem:equivnorm}For each $\theta \geq 0$ and $\beta >1$ , there are
constants $N_{1}=N_{1}(d_1,\theta ,\beta )$ and $N_{2}(d_1,\theta ,\beta )$
such that for all $\phi:\mathbf{R}^{d_1}\rightarrow \mathbf{R}$, 
\begin{equation}  \label{ineq:norm-equivalence}
N_{1}|r_{1}^{-\theta }\phi|_{\beta }\leq \sum_{| \gamma | \leq \left[ \beta %
\right] ^{-}}|r_{1}^{-\theta }\partial ^{\gamma }\phi|_{0}+\sum_{|\gamma
|=[\beta ]^{-}}|r_{1}^{-\theta }\partial ^{\gamma }\phi|_{\{\beta
\}^{+}}\leq N_{2}|r_{1}^{-\theta }\phi|_{\beta }.
\end{equation}
\end{lemma}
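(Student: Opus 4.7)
The proof rests on a single structural observation about the weight $r_1(x)=\sqrt{1+|x|^2}$: for every real $s$ and every multi-index $\delta$, the quotient
\[
q^{s}_{\delta}(x):=\frac{\partial^{\delta}r_{1}^{s}(x)}{r_{1}^{s}(x)}
\]
is a smooth rational function of $x$, bounded on $\mathbf{R}^{d_1}$, with $\partial^{\delta'}q^{s}_{\delta}$ bounded for every $\delta'$ (hence in $\cC^{\alpha}(\mathbf{R}^{d_1})$ for every $\alpha\ge 0$), with bounds depending only on $d_1, s, |\delta|, |\delta'|$. In particular $|q^{s}_{\delta}|_{\alpha}\le C(d_{1},s,|\delta|,\alpha)$ for every $\alpha\in (0,1]$. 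With this in hand, both inequalities in \eqref{ineq:norm-equivalence} reduce to the Leibniz rule applied to $r_1^{-\theta}\phi = r_1^{-\theta}\cdot\phi$ or to $\partial^{\gamma}\phi = \partial^{\gamma}(r_1^{\theta}\cdot r_1^{-\theta}\phi)$, respectively.

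For the right-hand (easy) inequality, the plan is to write, for each multi-index $\gamma$ with $|\gamma|\le [\beta]^-$,
\[
\partial^{\gamma}(r_1^{-\theta}\phi)=\sum_{\delta\le \gamma}\binom{\gamma}{\delta}\,q^{-\theta}_{\delta}(x)\,\bigl(r_{1}^{-\theta}\partial^{\gamma-\delta}\phi\bigr),
\]
so that $|\partial^{\gamma}(r_1^{-\theta}\phi)|_{0}$ is controlled by a finite sum of terms $|r_{1}^{-\theta}\partial^{\gamma-\delta}\phi|_{0}$ appearing in the middle expression of \eqref{ineq:norm-equivalence}. For $|\gamma|=[\beta]^{-}$, the Hölder seminorm $[\partial^{\gamma}(r_{1}^{-\theta}\phi)]_{\{\beta\}^{+}}$ is estimated by the product rule $[fg]_{\alpha}\le [f]_{\alpha}|g|_{0}+|f|_{0}[g]_{\alpha}$: the $\delta=0$ term contributes $[r_{1}^{-\theta}\partial^{\gamma}\phi]_{\{\beta\}^{+}}\le |r_{1}^{-\theta}\partial^{\gamma}\phi|_{\{\beta\}^{+}}$, and each $\delta\ne 0$ term involves $r_{1}^{-\theta}\partial^{\gamma-\delta}\phi$ with $|\gamma-\delta|\le [\beta]^-{-}1$; a direct differentiation together with the Leibniz rule again shows this function has bounded gradient, controlled by $\sum_{|\gamma'|\le [\beta]^-}|r_{1}^{-\theta}\partial^{\gamma'}\phi|_{0}$, hence is uniformly $\{\beta\}^{+}$-Hölder.

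For the left-hand inequality, I run the analogous expansion in the opposite direction:
\[
r_{1}^{-\theta}\partial^{\gamma}\phi=r_{1}^{-\theta}\partial^{\gamma}\!\bigl(r_{1}^{\theta}\cdot r_{1}^{-\theta}\phi\bigr)=\sum_{\delta\le\gamma}\binom{\gamma}{\delta}\,q^{\theta}_{\delta}(x)\,\partial^{\gamma-\delta}(r_{1}^{-\theta}\phi),
\]
and repeat the same two estimates: the sup-norm case is immediate, and for $|\gamma|=[\beta]^-$ the Hölder seminorm of the right-hand side is bounded by the Leibniz product rule, where the $\delta\neq 0$ lower-order pieces $\partial^{\gamma-\delta}(r_{1}^{-\theta}\phi)$ have bounded gradients (again a consequence of the assumed bound $|r_{1}^{-\theta}\phi|_{\beta}<\infty$ on one more derivative), hence are $\{\beta\}^{+}$-Hölder uniformly in $x$.

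Expected main obstacle: there is no conceptual difficulty, but bookkeeping the Hölder seminorms of the lower-order pieces $r_{1}^{-\theta}\partial^{\gamma-\delta}\phi$ (respectively $\partial^{\gamma-\delta}(r_{1}^{-\theta}\phi)$) for $|\gamma-\delta|<[\beta]^{-}$ requires one to notice that their gradients are pointwise bounded (because one further derivative is available from $|r_{1}^{-\theta}\phi|_{\beta}$ or from the hypothesis on the middle expression), which upgrades a mere $L^{\infty}$ bound to a uniform $\alpha$-Hölder bound for any $\alpha\in (0,1]$. Once this is observed, the bounds $N_{1},N_{2}$ depending only on $d_{1},\theta,\beta$ fall out of the finite number of terms in the Leibniz expansion together with the universal bounds on $|q^{\pm\theta}_{\delta}|_{\{\beta\}^{+}}$.
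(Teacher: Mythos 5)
Your proposal is correct and follows essentially the same route as the paper's proof: a Leibniz expansion in both directions using the uniform bounds on $r_{1}^{\theta}\partial^{\delta}(r_{1}^{-\theta})$ (your $q^{s}_{\delta}$) and their derivatives, together with the observation that the lower-order pieces have bounded gradients and are therefore uniformly $\{\beta\}^{+}$-H\"older. The only slip is cosmetic: you have swapped the labels of the two inequalities (controlling $\partial^{\gamma}(r_{1}^{-\theta}\phi)$ by the middle quantity is the \emph{left} inequality in \eqref{ineq:norm-equivalence}, and vice versa), but both directions are fully covered by your two expansions.
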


\begin{proof}
For each multi-index $\gamma $ with $|\gamma |\leq \lbrack \beta ]^{-}$ and $%
x$, we have 
\begin{equation*}
\partial ^{\gamma }(r_{1}^{-\theta }\phi)(x)=\sum_{\underset{|\gamma
_{1}|\geq 1}{\gamma _{1}+\gamma _{2}+=\gamma }}r_{1}(x)^{\theta }\partial
^{\gamma _{1}}(r_{1}^{-\theta })(x)r_{1}(x)^{-\theta }\partial ^{\gamma
_{2}}\phi(x)+r_{1}(x)^{-\theta }\partial ^{\gamma }\phi(x).
\end{equation*}%
It is easy to show by induction that for all multi-indices $\gamma $, $%
|r_{1}^{\theta }\partial ^{\gamma }(r_{1}^{-\theta })|_{1}<\infty .$
Moreover, for all multi-indices $\gamma $ with $|\gamma |<[\beta ]^{-}$, 
\begin{equation*}
|r_{1}^{-\theta }\partial ^{\gamma }\phi|_{1}\leq |\nabla ( r_{1}^{-\theta
}\partial ^{\gamma }\phi)|\leq |r_{1}^{-\theta }\nabla ( r_{1}^{-\theta
})|_{0}|r_{1}^{-\theta }\partial ^{\gamma }\nabla \phi|_{0}.
\end{equation*}%
Thus, for each multi-index $\gamma $ with $|\gamma |\leq \lbrack \beta ]^{-}$%
, 
\begin{equation*}
|\partial ^{\gamma }(r_{1}^{-\theta }\phi)|_{0}\leq \sum_{\underset{|\gamma
_{1}|\geq 1}{\gamma _{1}+\gamma _{2}+=\gamma }}|r_{1}^{\theta }\partial
^{\gamma _{1}}(r_{1}^{-\theta })|_{0}|r_{1}^{-\theta }\partial ^{\gamma
_{2}}\phi|_{0}+|r_{1}^{-\theta }\partial ^{\gamma }\phi|_{0}
\end{equation*}%
and for each multi-index $\gamma $ with $|\gamma |=[\beta ]^{-}$, 
\begin{equation*}
|\partial ^{\gamma }(r_{1}^{-\theta }\phi)|_{\{\beta \}^{+}}\leq \sum_{%
\underset{|\gamma _{1}|\geq 1}{\gamma _{1}+\gamma _{2}+=\gamma }%
}|r_{1}^{\theta }\partial ^{\gamma _{1}}(r_{1}^{-\theta
})|_{1}|r_{1}^{-\theta }\nabla ( r_{1}^{-\theta })|_{0}|r_{1}^{-\theta
}\partial ^{\gamma _{2}}\nabla \phi|_{0}+|r_{1}^{-\theta }\partial ^{\gamma
}\phi|_{0}.
\end{equation*}%
This proves the leftmost inequality in \eqref{ineq:norm-equivalence}. For
all $i\in \{1,\ldots ,d\}$ and $x$, 
\begin{equation*}
r_{1}^{-\theta }\partial _{i}\phi(x)=\partial _{i}(r_{1}^{-\theta
}\phi)(x)-r_{1}(x)^{-\theta }\phi(x)r_{1}(x)^{\theta }\partial
_{i}(r_{1}^{-\theta })(x).
\end{equation*}%
It follows by induction that for all multi-indices $\gamma $ with $|\gamma
|\leq \lbrack \beta ]^{-}$ and $x$, $r_{1}^{-\theta }\partial ^{\gamma
}\phi(x)$ is a sum of $\partial ^{\gamma }(r_{1}^{-\theta }\phi)(x),$ a
finite sum of terms, each of which is a product of one term of the form $%
\partial ^{\tilde{\gamma}}(r_{1}^{-\theta }\phi)(x),$ $|\tilde{\gamma}%
|<|\gamma |$, and a finite number of terms of the form $\partial ^{\gamma
_{1}}(r_{1}^{\theta })\partial ^{\gamma _{2}}(r_{1}^{-\theta }),$ $|\gamma
_{1}|,|\gamma _{2}|\leq |\gamma |$. Since for all multi-indices $\gamma _{1}$
and $\gamma _{2}$, we have $|\partial ^{\gamma _{1}}(r_{1}^{\theta
})\partial ^{\gamma _{2}}(r_{1}^{-\theta })|_{1}<\infty ,$ the rightmost
inequality in \eqref{ineq:norm-equivalence} follows.
\end{proof}

\begin{corollary}
\label{co8}For each $\theta \geq 0$ and $\beta >1$ , there are constants $%
N_{1}=N_{1}(d_1,\theta ,\beta )$ and $N_{2}(d_1,\theta ,\beta )$ such that
for all $\phi:\mathbf{R}^{d_1}\rightarrow \mathbf{R}$, 
\begin{equation}
N_{1}|r_{1}^{-\theta }\phi|_{\beta }\leq |r_{1}^{-\theta
}\phi|_{0}+\sum_{|\gamma |=[\beta ]^{-}}|r_{1}^{-\theta }\partial ^{\gamma
}\phi|_{\{\beta \}^{+}}\leq N_{2}|r_{1}^{-\theta }\phi|_{\beta }.  \label{f1}
\end{equation}
\end{corollary}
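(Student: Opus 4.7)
The right-hand inequality $|r_{1}^{-\theta }\phi|_{0}+\sum_{|\gamma|=[\beta]^-}|r_{1}^{-\theta}\partial^{\gamma}\phi|_{\{\beta\}^+} \le N_{2}|r_{1}^{-\theta}\phi|_{\beta}$ is immediate from the right-hand inequality of Lemma \ref{lem:equivnorm}: the left-hand side of \eqref{f1} is obtained from the middle expression of \eqref{ineq:norm-equivalence} simply by deleting the terms $|r_{1}^{-\theta}\partial^{\gamma}\phi|_0$ with $1\le |\gamma| < [\beta]^-$, so the bound just persists.

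For the left-hand inequality, the plan is to use Lemma \ref{lem:equivnorm} to reduce to showing that each intermediate weighted supremum norm $|r_{1}^{-\theta}\partial^{\gamma}\phi|_0$ with $1 \le |\gamma| < [\beta]^-$ is dominated by a constant times $|r_{1}^{-\theta}\phi|_0 + \sum_{|\gamma'|=[\beta]^-}|r_{1}^{-\theta}\partial^{\gamma'}\phi|_{\{\beta\}^+}$. The key tool is the classical interior interpolation inequality in $C^{\beta}$: for every $x_{0}\in\mathbf{R}^{d_1}$ and every $\gamma$ with $1\le |\gamma| < [\beta]^-$,
\begin{equation*}
|\partial^{\gamma}\phi(x_{0})| \le C\bigl(\,\sup_{y\in B_{1}(x_{0})}|\phi(y)| + \sum_{|\gamma'|=[\beta]^-}[\partial^{\gamma'}\phi]_{\{\beta\}^+;\, B_{1}(x_{0})}\bigr),
\end{equation*}
with $C=C(d_1,\beta)$; this is a standard consequence of Taylor expansion along lines through $x_{0}$ and a Vandermonde argument to extract the individual Taylor coefficients.

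Next, I would transfer the right-hand side to weighted quantities. Because $r_{1}(y) = \sqrt{1+|y|^{2}} \ge 1$ and is uniformly comparable on the unit ball, one has $C^{-1}r_{1}(x_{0})\le r_{1}(y)\le C r_{1}(x_{0})$ for $y\in B_{1}(x_{0})$ and the Lipschitz bound $|r_{1}^{\theta}(y)-r_{1}^{\theta}(z)|\le C r_{1}^{\theta-1}(x_{0})|y-z|\le C r_{1}^{\theta}(x_{0})|y-z|$. Writing $\psi = r_{1}^{-\theta}\partial^{\gamma'}\phi$ and decomposing
\begin{equation*}
\partial^{\gamma'}\phi(y) - \partial^{\gamma'}\phi(z) = r_{1}^{\theta}(y)\bigl(\psi(y)-\psi(z)\bigr) + \psi(z)\bigl(r_{1}^{\theta}(y)-r_{1}^{\theta}(z)\bigr),
\end{equation*}
one readily obtains $[\partial^{\gamma'}\phi]_{\{\beta\}^+;B_{1}(x_{0})}\le C r_{1}^{\theta}(x_{0})|r_{1}^{-\theta}\partial^{\gamma'}\phi|_{\{\beta\}^+}$ and similarly $\sup_{B_{1}(x_{0})}|\phi| \le C r_{1}^{\theta}(x_{0})|r_{1}^{-\theta}\phi|_{0}$. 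Plugging these estimates into the interior interpolation inequality, dividing through by $r_{1}^{\theta}(x_{0})$, and taking the supremum over $x_{0}\in \mathbf{R}^{d_1}$ yields the pointwise bound needed for each intermediate derivative; combined with Lemma \ref{lem:equivnorm} this gives the left-hand inequality of \eqref{f1}.

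The only real obstacle is the bookkeeping in the second paragraph, namely the verification that the Lipschitz constant of $r_{1}^{\theta}$ on $B_{1}(x_{0})$ can be absorbed into $r_{1}^{\theta}(x_{0})$ (which uses $r_{1}\ge 1$), and that the cross-term involving $\psi(z)\bigl(r_{1}^{\theta}(y)-r_{1}^{\theta}(z)\bigr)$ can be written as $r_{1}^{\theta}(x_{0})|y-z|^{\{\beta\}^+}\cdot$ (bounded constant) by using $|y-z|\le 2$ to trade the exponent $1$ for $\{\beta\}^+\in (0,1]$. All the remaining steps are either direct applications of the classical (unweighted) interpolation inequality or immediate invocations of Lemma \ref{lem:equivnorm}.
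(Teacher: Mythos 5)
Your proposal is correct and takes essentially the same approach as the paper: the right-hand bound is obtained by dropping terms from Lemma \ref{lem:equivnorm}, and the left-hand bound is obtained by localizing to sets on which $r_{1}^{-\theta}$ is comparable to a constant (you use unit balls $B_{1}(x_{0})$, the paper uses dyadic annuli covered by unit balls) and applying a classical unweighted interpolation inequality that controls the intermediate derivatives by $\sup|\phi|$ and top-order data. The only cosmetic difference is that your interpolation inequality uses the H\"older seminorm of the derivatives of order $[\beta]^-$, whereas the paper uses their supremum with an $\varepsilon$-trade-off; since both quantities are contained in the middle term of \eqref{f1}, the conclusion is unaffected.
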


\begin{proof}
It is well known that for an arbitrary unit ball $B\subset \mathbf{R}^{d_1}$
and any $1\leq k<[\beta]^{-}$, there is a constant $N$ such that for any $%
\varepsilon >0,$%
\begin{equation*}
\sup_{x\in B,|\gamma |=k}|\partial ^{\gamma }\phi|\leq N( \varepsilon
\sup_{x\in B,|\gamma | =[\beta ] ^{-}}|\partial ^{\gamma
}\phi(x)|+\varepsilon ^{-k}\sup_{x\in B}|\phi(x)|).
\end{equation*}%
Let $U_{0}=\{ x\in \mathbf{R}^{d_1}:|x|\leq 1\}$ and $U_{j}=\{ x\in \mathbf{R%
}^{d_1}:2^{j-1}\leq | x| \leq 2^{j}\} ,j\geq 1.$ For each $j$, we have 
\begin{align*}
\sup_{x\in U_{j},|\gamma| =k}| \partial ^{\gamma }\phi(x)|
&=\sup_{B\subseteq U_{j}}\sup_{x\in B,| \gamma | =k}| \partial ^{\gamma
}\phi(x)| \leq N(\varepsilon \sup_{B\subseteq U_{j}}\sup_{x\in B,| \gamma |
=[ \beta ] ^{-}}| \partial ^{\gamma }\phi(x)| +\varepsilon
^{-k}\sup_{B\subseteq U_{j}}\sup_{x\in B}| \phi(x)| ) \\
&\le N(\varepsilon \sup_{x\in U_{j},| \gamma | =[ \beta ] ^{-}}| \partial
^{\gamma }\phi(x)| +\varepsilon ^{-k}\sup_{x\in U_{j}}| \phi(x)| ).
\end{align*}%
Since for every $j$, 
\begin{equation*}
2^{-\theta /2}2^{-j\theta }\sup_{x\in U_{j},| \gamma | =k}| \partial
^{\gamma }\phi(x)| \leq \sup_{x\in U_{j},| \gamma | =k}| r^{-\theta
}\partial ^{\gamma }\phi(x)| \leq 2^{^{\theta }}2^{-(j-1)\theta }\sup_{x\in
U_{j},| \gamma | =k}| \partial ^{\gamma }\phi(x)| ,
\end{equation*}%
we see that%
\begin{align*}
2^{-\theta /2}\sup_{j}2^{-j\theta }\sup_{x\in U_{j},| \gamma | =k}| \partial
^{\gamma }\phi(x)| &\leq \sup_{j}\sup_{x\in U_{j},| \gamma | =k}| r^{-\theta
}\partial ^{\gamma }\phi(x)| =| r^{-\theta }\partial ^{\gamma }\phi| _{0} \\
&\leq 2^{\theta }\sup_{j}2^{-j\theta }\sup_{x\in U_{j},| \gamma | =k}|
\partial ^{\gamma }\phi(x)|,
\end{align*}%
and the statement follows.
\end{proof}

\begin{remark}
\label{rr3}If $\phi:\mathbf{R}^{d_1}\rightarrow\mathbf{R}$ is such that $|
r^{-\theta _{1}}\phi| _{0}+| r^{-\theta _{2}}\nabla \phi| _{0}<\infty $ for $%
\theta_1,\theta_2\in\mathbf{R}$ with $\theta _{1}-\theta _{2}\le 1$, then 
\begin{equation*}
[ r^{-\theta _{2}}\phi]_{1}\leq N(| r^{-\theta _{1}}\phi| _{0}+| r^{-\theta
_{2}}\nabla \phi| _{0})
\end{equation*}
\end{remark}

\begin{proof}
Indeed, for each $x$ and $y$, we have 
\begin{align*}
| \phi(x)-\phi(y) | &\leq | r^{-\theta _{2}}\nabla \phi|
_{0}\int_{0}^{1}r^{\theta _{2}}( x+s( y-x) ) ds| y-x| \le | r^{-\theta
_{2}}\nabla \phi| _{0}( r( y) ^{\theta _{2}}\vee r( x) ^{\theta _{2}}) |
y-x|,
\end{align*}%
and hence the claim follows from Lemma \ref{lem:characteriziationofweighted}.
\end{proof}

\begin{lemma}
\label{lem:GeneralCompWeightedlemma} Let $n\in\mathbf{N}$, $\beta ,\mu \in
(0,1]$, $\theta _{3},\theta _{4}\geq 0$ be such that $\theta _{3}-\theta
_{4}\le 1$. There is a constant $N=N(d_1,\theta _{1},\theta _{3},\theta
_{4},n,\beta ) $ such that for all $\phi:\mathbf{R}^{d_1}\rightarrow\mathbf{R%
}$ with $r_{1}^{-\theta _{1}}\phi\in \mathcal{C}^{n+\beta }(\mathbf{R}^{d_1},%
\mathbf{R}^{d_1})$ and $H:\mathbf{R}^{d_1}\rightarrow\mathbf{R}^{d_1}$ with 
\begin{equation*}
|r_{1}^{-\theta _{3}}H|_{0}+|r_{1}^{-\theta _{4}}\nabla H|_{n-1+\mu
}=:N_2<\infty ,
\end{equation*}%
we have 
\begin{equation*}
|\phi\circ H\cdot r^{-\theta _{1}\theta _{3}}|_{0} \leq | r_{1}^{-\theta
_{1}}\phi| _{0} (1+|r_{1}^{-\theta _{3}}H|_{0})^{\theta _{1}} 
\end{equation*}
and 
\begin{equation*}
|r_{1}^{-\theta _{1}\theta _{3}-\theta _{4}(n+\mu\wedge \beta)}\nabla (\phi\circ
H)|_{n-1+\mu \wedge \beta } \leq N|r_{1}^{-\theta _{1}}\phi|_{n+\beta }(
1+N_{2}) ^{\theta _{1}+\mu\wedge \beta+n}. 
\end{equation*}
\end{lemma}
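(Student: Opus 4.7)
The first estimate is immediate. Since $\theta_{3}\geq 0$ and $r_{1}\geq 1$, the pointwise bound
\[
r_{1}(H(x))\leq 1+|H(x)|\leq 1+|r_{1}^{-\theta_{3}}H|_{0}\,r_{1}(x)^{\theta_{3}}\leq (1+|r_{1}^{-\theta_{3}}H|_{0})\,r_{1}(x)^{\theta_{3}}
\]
combined with $|\phi(H(x))|\leq|r_{1}^{-\theta_{1}}\phi|_{0}\,r_{1}(H(x))^{\theta_{1}}$ gives the claim.

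For the second estimate, the strategy is to combine Fa\`a di Bruno with the composition and product-rule machinery already in the appendix. I expand $\partial^{\gamma}(\phi\circ H)$ for $1\leq|\gamma|\leq n$ as a finite sum of terms of the form $\bigl((\partial^{\tilde\gamma}\phi)\circ H\bigr)\prod_{j=1}^{m}\partial^{\gamma_{j}}H^{i_{j}}$, where $(\gamma_{1},\dots,\gamma_{m})$ is an ordered partition of $\gamma$ with $|\gamma_{j}|\geq 1$, $m=|\tilde\gamma|$, and the indices $i_{j}\in\{1,\dots,d_{1}\}$ are summed over. Each $\partial^{\gamma_{j}}H^{i_{j}}$ is a derivative of $\nabla H$ of order $|\gamma_{j}|-1\leq n-1$, so by Corollary \ref{co8} its weighted sup-norm and its $(\mu\wedge\beta)$-H\"older seminorm with weight $r_{1}^{-\theta_{4}}$ are each bounded by a constant times $|r_{1}^{-\theta_{4}}\nabla H|_{n-1+\mu}\leq N_{2}$. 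For the factor $(\partial^{\tilde\gamma}\phi)\circ H$, note that by Remark \ref{rr3} and the hypothesis $\theta_{3}-\theta_{4}\leq 1$ one has $[r_{1}^{-\theta_{4}}H]_{1}\leq N(|r_{1}^{-\theta_{3}}H|_{0}+|r_{1}^{-\theta_{4}}\nabla H|_{0})\leq N N_{2}$, i.e., $H$ is weighted-Lipschitz; thus Lemma \ref{lem:compositionofweightedholder} applied with ``$\mu$''$=1$, together with Remark \ref{r22} and the norm-equivalence Lemma \ref{lem:equivnorm}, delivers the control
\[
\big|(\partial^{\tilde\gamma}\phi)\circ H\cdot r_{1}^{-\theta_{1}\theta_{3}}\big|_{\mu\wedge\beta}\leq N\,|r_{1}^{-\theta_{1}}\phi|_{n+\beta}\,(1+N_{2})^{\theta_{1}+\beta}.
\]

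Multiplying the factors via the product rule for weighted H\"older seminorms yields, for each term in the Fa\`a di Bruno expansion, a bound of the shape $N|r_{1}^{-\theta_{1}}\phi|_{n+\beta}(1+N_{2})^{\theta_{1}+\mu\wedge\beta+n}$ in the $(\mu\wedge\beta)$-H\"older seminorm with weight $r_{1}^{-\theta_{1}\theta_{3}-m\theta_{4}}$. Since $m\leq n$, $\theta_{4}\geq 0$ and $r_{1}\geq 1$, replacing this weight by the heavier $r_{1}^{-\theta_{1}\theta_{3}-\theta_{4}(n+\mu\wedge\beta)}$ (which differs by a factor $r_{1}^{-\theta_{4}(n+\mu\wedge\beta-m)}\leq 1$) only sharpens the estimate. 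Summing over $1\leq|\gamma|\leq n$ and over partitions, and invoking Lemma \ref{lem:equivnorm} once more to reassemble the $C^{n-1+\mu\wedge\beta}$-norm of $\nabla(\phi\circ H)$, yields the claimed inequality.

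The main obstacle is purely bookkeeping: one must track simultaneously (i) the total weight exponent $\theta_{1}\theta_{3}+\theta_{4}(n+\mu\wedge\beta)$ across the many terms in the expansion, absorbing the excess into a harmless factor $r_{1}^{-\text{nonnegative}}\leq 1$, and (ii) the single $(\mu\wedge\beta)$-H\"older seminorm, which by the product rule must be concentrated on exactly one factor per term while the remaining factors contribute only their weighted sup-norm bounds. The key conceptual input is that the hypothesis $\theta_{3}-\theta_{4}\leq 1$ upgrades $H$ to a weighted-Lipschitz map, so that Lemma \ref{lem:compositionofweightedholder} (which in general produces the product exponent $\beta\mu$) is here invoked with ``$\mu$''$=1$ and yields the desired exponent $\mu\wedge\beta$ rather than $\beta\mu$.
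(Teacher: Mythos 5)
Your argument is essentially the paper's own proof: the paper also expands $\partial^{\gamma}(\phi\circ H)$ via the chain rule (grouping the Fa\`a di Bruno terms into the pieces $\mathcal{I}_1^{\gamma},\mathcal{I}_2^{\gamma},\mathcal{I}_3^{\gamma}$), controls the inner factor $(\partial^{\tilde\gamma}\phi)\circ H$ with Lemma \ref{lem:compositionofweightedholder} after upgrading $H$ to weighted-Lipschitz via Remark \ref{rr3} and $\theta_3-\theta_4\le 1$, and reassembles the norm with Lemma \ref{lem:equivnorm}. The only blemish is the displayed intermediate bound, where the weight on $(\partial^{\tilde\gamma}\phi)\circ H$ should carry the extra factor $r_1^{-(\mu\wedge\beta)\theta_4}$ coming from the composition lemma; since $\theta_4\ge 0$ and you absorb all excess weight into the heavier final exponent $\theta_1\theta_3+\theta_4(n+\mu\wedge\beta)$ anyway, this does not affect the conclusion.
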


\begin{proof}
It follows immediately from Lemma \ref{lem:compositionofweightedholder} and
Remark \ref{rr3} that 
\begin{equation*}
|\phi \circ H\cdot r^{-\theta _{1}\theta _{3}}|_{0}\leq |r_{1}^{-\theta
_{1}}\phi |_{0}(1+|r_{1}^{-\theta _{3}}H|_{0})^{\theta _{1}}.
\end{equation*}%
Using induction, we get that for each $x$ and $|\gamma |=n$, 
\begin{equation*}
\partial ^{\gamma }(\phi (H(x)))=\mathcal{I}_{1}^{\gamma }(x)+\mathcal{I}%
_{2}^{\gamma }(x)+I_{3}^{\gamma }(x),
\end{equation*}%
where 
\begin{equation*}
\mathcal{I}_{1}^{\gamma }(x)=\sum_{i=1}^{d_{1}}\partial _{i}\phi
(H(x))\partial ^{\gamma }H^{i}(x)
\end{equation*}%
$\mathcal{I}_{2}^{\gamma }(x)$ is a finite sum of terms of the form 
\begin{equation*}
\partial _{i_{1}}\cdots \partial _{i_{|\gamma |}}\phi (H(x))\partial ^{%
\tilde{\gamma}_{1}}H^{i_{1}}\cdots \partial ^{\tilde{\gamma}_{|\gamma
|}}H^{i_{|\gamma |}}
\end{equation*}%
with $i_{1},\ldots ,i_{|\gamma |}\in \{1,2,\ldots ,d\}$, $|\tilde{\gamma}%
_{1}|=\cdots =|\tilde{\gamma}_{|\gamma |}|=1$, and $\sum_{k=1}^{|\gamma |}%
\tilde{\gamma}_{k}=\gamma $, if $n\geq 2$ and zero otherwise, and where $%
\mathcal{I}_{3}^{\gamma }(x)$ is a finite sum of terms of the form 
\begin{equation*}
\partial _{i_{1}}\cdots \partial _{i_{k}}\phi (H(x))\partial ^{\tilde{\gamma}%
_{1}}H^{i_{1}}(x)\cdots \partial ^{\tilde{\gamma}_{k}}H^{i_{k}}(x)
\end{equation*}%
with $2\leq k<n$, $i_{1},i_{2},\ldots ,i_{k}\in \{1,\ldots ,d\}$, and $%
\sum_{j=1}^{k}\tilde{\gamma}_{j}=\gamma $, $1\leq |\tilde{\gamma}%
_{j}|<|\gamma |,$ if $n\geq 3$, and zero otherwise. Thus, owing to Lemmas %
\ref{lem:compositionofweightedholder} and \ref{lem:equivnorm}, for any
multi-index $\gamma $ with $|\gamma |=n$, we have 
\begin{equation*}
|r_{1}^{-\theta _{3}\theta _{1}-\theta _{4}}\mathcal{I}_{1}^{\gamma
}|_{0}\leq N|r_{1}^{-\theta _{1}}\nabla \phi |_{0}(1+|r_{1}^{-\theta
_{3}}H|_{0})^{\theta _{1}}|r_{1}^{-\theta _{4}}\partial ^{\gamma }H|_{0},
\end{equation*}%
\begin{equation*}
|r_{1}^{-\theta _{3}\theta _{1}-n\theta _{4}}\mathcal{I}_{2}^{\gamma
}|_{0}\leq N|r_{1}^{-\theta _{1}}\partial ^{\gamma }\phi
|_{0}(1+|r_{1}^{-\theta _{3}}H|_{0})^{\theta _{1}}|r_{1}^{-\theta
_{4}}\nabla H|_{0}^{n},
\end{equation*}%
and 
\begin{equation*}
|r_{1}^{-\theta _{3}\theta _{1}-(n-1)\theta _{4}}\mathcal{I}_{3}^{\gamma
}|_{0}\leq N|r_{1}^{-\theta _{1}}\phi |_{n-1}(1+|r_{1}^{-\theta
_{3}}H|_{0}+|r_{1}^{-\theta _{4}}\nabla H|)^{\theta _{1}+n-1},
\end{equation*}%
and hence 
\begin{equation*}
|r^{-\theta _{1}\theta _{3}-n\theta _{4}}\partial ^{\gamma }(\phi \circ
H)|_{0}\leq N|r_{1}^{-\theta _{1}}\phi |_{n}(1+|r_{1}^{-\theta
_{3}}H|_{0}+|r_{1}^{-\theta _{4}}\nabla H|)^{\theta _{1}+n}.
\end{equation*}%
Once again appealing to Lemmas \ref{lem:compositionofweightedholder}
and \ref{lem:equivnorm}, for all multi-indices $\gamma $ with $|\gamma |=n$,
we get 
\begin{equation*}
|r_{1}^{-\theta _{1}\theta _{3}-(1+\mu\wedge \beta)\theta _{4}}\mathcal{I}_{1}^{\gamma }|_{\mu
\wedge \beta }\leq N|r_{1}^{-\theta _{1}}\phi |_{1+\mu \wedge \beta }\left(
1+N_{2}\right) ^{\theta _{1}+\mu\wedge \beta+1},
\end{equation*}%
\begin{equation*}
|r_{1}^{-\theta _{1}\theta _{3}-(n+\mu\wedge \beta)\theta _{4}}\mathcal{I}_{2}^{\gamma
}|_{\mu \wedge \beta }+|r_{1}^{-\theta _{1}\theta _{3}-(n-1+\mu\wedge \beta)\theta _{4}}%
\mathcal{I}_{3}^{\gamma }|_{\mu \wedge \beta }\leq N|r_{1}^{-\theta
_{1}}\phi |_{n+\mu \wedge \beta }\left( 1+N_{2}\right) ^{\theta _{1}+n+\mu\wedge \beta}.
\end{equation*}%
Then applying Lemmas \ref{lem:compositionofweightedholder} and \ref%
{lem:equivnorm}, we complete the proof. 
\end{proof}

We shall now provide some useful estimates of composite functions of
diffeomorphisms.

\begin{lemma}
\label{lem:compositediffeo}Let $H:\mathbf{R}^{d_1}\rightarrow \mathbf{R}%
^{d_1} $ be continuously differentiable and assume that for all $x\in 
\mathbf{R}^{d_1} $, 
\begin{equation*}
| H(x)| \leq L_{0}+L_{1}|x|\quad \text{and}\quad |\nabla H(x)|\leq L_{2}.
\end{equation*}%
Assume that for all $x\in \mathbf{R}^{d_1}$, $\kappa (x)=(I_{d_1}+\nabla
H(x))^{-1}$ exists and $|\kappa (x)|\leq N_{\kappa }$.

\begin{tightenumerate}
\item Then  the mapping $\tilde{H}(x):=x+H(x)$ is a diffeomorphism with $\tilde{H}^{-1}(x)=x-H( 
\tilde{H}^{-1}(x))=:x+F(x)$ and  for all $x\in\mathbf{R}^{d_1}$, 
\begin{equation*}
| F(x)| \le L_0+L_1L_0N_{\kappa}+L_1N_{\kappa}| x|, \quad |\nabla F(x)|\le
N_{\kappa}L_{2}, \quad \quad |\left( I_{d_1}+\nabla F(x)\right)
^{-1}| \leq 1+L_{2}.
\end{equation*}
For all $p\in\bR$,  there is a constant $N=N(L_0,L_1,N_{\kappa},p)$ such that for all $x\in\mathbf{R}^{d_1}$, 
$$
\frac{r_1^p(\tilde{H}(x))}{r_1^p(x)}+\frac{r_1^p(\tilde{H}^{-1}(x))}{r_1^p(x)}\le N, \quad 
r_1^{-1}(x)|H^{i}(x)+F^{k;i}(x)|\le N[H]_1|r_1^{-1}H|_0.
$$
Moreover,  there is a constant $N=N(L_0,L_1,N_{\kappa},p)$ such that
$$
\left|\frac{r_1^p(\tilde{H})}{r_1^p}-1+\mathbf{1}_{(1,2]}(\alpha)pH^{i}r_1^{-2}x^i\right|_\alpha+\left|\frac{r_1^p(\tilde{H}^{-1})}{r_1^p}-1-\mathbf{1}_{(1,2]}(\alpha)pF^{i}r_1^{-2}x^i\right|_{\alpha}$$
$$\le N(|r_1^{-1}H|_0^{[\alpha]^-+1}+[H]^{[\alpha]^-+1}_1).
$$

\item If  for some $\beta>1$, $|\nabla H|_{\beta-1}\le L_3$, then there is a
constant $N=N(d_1,\beta,N_{\kappa},L_{3})$ such that%
\begin{equation}  \label{ineq:estimateofF}
|\nabla F| _{\beta -1}\leq N|\nabla H| _{\beta -1}.
\end{equation}

\item If for some $\beta\ge 1$, $|\nabla H|_{\beta-1}\le L_3$, then for each $%
\theta\ge 0$, there is a constant $N=N(d_1,$ $\beta,N_{\kappa},$ $L_1,L_3,\theta)$
such that 
\begin{equation*}
\left|\frac{r^{\theta}_1\circ \tilde{H}^{-1}}{r_1^{\theta}}%
-1\right|_{\beta}\le N[ |r_1^{-1}H|_0+|\nabla H|_{\beta-1}].
\end{equation*}

\item If $|H|_{0 }\leq L_{4}$,  and for some $\beta>0$, $|\nabla H|_{\beta\vee 1-1 }\leq L_{5}$  and $\phi:\mathbf{R}%
^{d_1}\rightarrow\mathbf{R}$ is such that for some $\mu\in (0,1]$ and $%
\theta\ge 0$, $r_1^{-\theta}\phi\in
C^{\beta+\mu}( \mathbf{R}^{d_1};\mathbf{R})$, then there is a constant $N=N(d_1,\beta,\mu,N_{%
\kappa},L_{4},L_5,\theta)$ such that 
\begin{align*}
|r_1^{-\theta}(\phi\circ \tilde{H}^{-1}-\phi)| _{\beta }&\leq N|r_1^{-\theta}\phi|_{\beta}(|H|_0+|\nabla H|_{\beta \vee 1-1})\\
&\quad + N\mathbf{1}_{(0,1]}(\{\beta\}^++\mu)\sum_{|\gamma|=[\beta]^-}
[\partial^{\gamma}(r_1^{-\theta}\phi)]_{\{\beta\}^++\mu}L_4^{\mu} \\
&\quad +N\mathbf{1}_{(1,2]}(\{\beta\}^++\mu)\sum_{|\gamma|=[\beta]^-}\left([\nabla 
\partial^{\gamma}(r_1^{-\theta}\phi)]_{\{\beta\}^++\mu-1}L_4^{\mu}+
|\nabla \partial^{\gamma}(r_1^{-\theta}\phi)|_{0}|\nabla H|_0 \right).
\end{align*}
\end{tightenumerate}
\end{lemma}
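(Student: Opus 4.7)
My plan is to establish parts (1)--(4) in order, each building on the previous ones, since all later parts require the inverse $\tilde{H}^{-1}$ and its derivative bounds from part (1).

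For part (1), I first invoke Hadamard's global inverse function theorem: the uniform bound $|\kappa(x)|\le N_\kappa$ forces $\tilde{H}$ to be a $C^1$-diffeomorphism of $\bR^{d_1}$, and $\nabla\tilde{H}^{-1}=\kappa\circ\tilde{H}^{-1}$ shows $\tilde{H}^{-1}$ is globally $N_\kappa$-Lipschitz. To pin down the translation, I note $|\tilde{H}(y)-\tilde{H}(0)|\ge|y|/N_\kappa$ (Lipschitz inverse), so taking $y=\tilde{H}^{-1}(0)$ yields $|\tilde{H}^{-1}(0)|\le N_\kappa|H(0)|\le N_\kappa L_0$. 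Combining gives $|\tilde{H}^{-1}(x)|\le N_\kappa(L_0+|x|)$; the bounds on $|F|$, $|\nabla F|$, and $|(I+\nabla F)^{-1}|$ then follow by the chain rule and the identity $(I+\nabla F(x))^{-1}=I+\nabla H(\tilde{H}^{-1}(x))$. The ratio bounds on $r_1^p(\tilde{H})/r_1^p$ come from plugging the linear growth of $\tilde{H}^{\pm 1}$ into $r_1$. The bound $r_1^{-1}|H+F|$ uses $H(x)+F(x)=H(x)-H(\tilde{H}^{-1}(x))$ and Lipschitz continuity of $H$ with constant $[H]_1$, together with $|x-\tilde{H}^{-1}(x)|=|F(x)|\le N|r_1^{-1}H|_0 r_1(x)$. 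For the $\alpha$-Hölder refinement of the ratio identity, I Taylor expand $y\mapsto r_1^p(y)$ around $x$ to order $[\alpha]^-+1$ and evaluate at $y=\tilde{H}(x)$; the pointwise remainder is $O(|H|^{[\alpha]^-+1})$, and the Hölder seminorm is handled by differentiating the same identity in $x$.

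For part (2), differentiate $F$ using $\nabla F(x)=-\nabla H(\tilde{H}^{-1}(x))\kappa(\tilde{H}^{-1}(x))$. Iterating the chain and product rules expresses $\partial^\gamma\nabla F$ for $|\gamma|\le[\beta]^-$ as a finite sum of products of factors of the form $\partial^{\tilde\gamma}H\circ\tilde{H}^{-1}$ and $\partial^{\tilde\gamma}\tilde{H}^{-1}$. Each composition retains the appropriate Hölder regularity by Lemma \ref{lem:GeneralCompWeightedlemma}, and the claim follows by induction on $[\beta]^-$, using at each step the already-established bound on lower derivatives of $F$. For part (3), I use the fundamental theorem of calculus,
\begin{equation*}
r_1^\theta(\tilde{H}^{-1}(x))-r_1^\theta(x)=\int_0^1\nabla r_1^\theta(x+sF(x))\cdot F(x)\,ds,
\end{equation*}
divide by $r_1^\theta(x)$, and exploit $|\nabla r_1^\theta(y)|\le Nr_1^{\theta-1}(y)$ together with $|F|\le N|r_1^{-1}H|_0 r_1$. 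Higher-order Hölder seminorms are obtained by differentiating under the integral sign, invoking part (2) for derivatives of $F$, and applying Lemma \ref{lem:GeneralCompWeightedlemma} to control $\nabla r_1^\theta\circ(x+sF)$.

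For part (4), I use the analogous first-order Taylor formula
\begin{equation*}
\phi(\tilde{H}^{-1}(x))-\phi(x)=\int_0^1\nabla\phi(x+sF(x))\cdot F(x)\,ds,
\end{equation*}
multiply by $r_1^{-\theta}$, and apply the product rule together with part (3) (to absorb the weight through $x+sF$). For the $\{\beta\}^+$-Hölder seminorm of the $[\beta]^-$-th derivative, I differentiate the integrand $[\beta]^-$ times in $x$, express each derivative of $\phi(x+sF(x))$ via the multivariate Faà di Bruno formula using part (2) to estimate $\partial^{\tilde\gamma}F$, and apply Lemma \ref{lem:GeneralCompWeightedlemma}. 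The dichotomy on $\{\beta\}^++\mu$ enters at the very last step: when $\{\beta\}^++\mu\le 1$ the resulting composition can be estimated using the Hölder seminorm $[\partial^\gamma(r_1^{-\theta}\phi)]_{\{\beta\}^++\mu}$ against $|H|_0^\mu$; when $\{\beta\}^++\mu>1$ I instead pull out an extra derivative of $\phi$ and use the Hölder seminorm of order $\{\beta\}^++\mu-1$, which produces the additional term $|\nabla\partial^\gamma(r_1^{-\theta}\phi)|_0|\nabla H|_0$.

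The main obstacle will be part (4): tracking the weight $r_1^{-\theta}$ through the composition with $x+sF(x)$, distributing derivatives via the Faà di Bruno expansion without losing regularity, and executing the case split on $\{\beta\}^++\mu$ cleanly require careful bookkeeping. Parts (1)--(3) are essentially elementary once Hadamard's theorem is invoked, but part (4) is where all of the weighted Hölder machinery and the composition lemmas must be combined simultaneously.
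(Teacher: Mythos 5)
Parts (1)--(3) of your plan track the paper's argument closely: Hadamard's theorem for the global diffeomorphism, the identities $\nabla F=-\nabla H(\tilde H^{-1})\,\kappa(\tilde H^{-1})$ and $(I_{d_1}+\nabla F)^{-1}=I_{d_1}+\nabla H(\tilde H^{-1})$, the translation bound via $|\tilde H^{-1}(0)|\le N_\kappa L_0$, induction on the order of the derivative for (2), and the fundamental-theorem-of-calculus identity for the weight ratio in (3) (the paper's $G_s(x)=x+sF(x)$ is exactly your line segment). These are fine.

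Part (4), however, has a genuine gap. Your starting point is the representation $\phi(\tilde H^{-1}(x))-\phi(x)=\int_0^1\nabla\phi(x+sF(x))\cdot F(x)\,ds$, followed by differentiating the integrand $[\beta]^-$ times. This requires derivatives of $\phi$ up to order $[\beta]^-+1$. But the hypothesis is only $r_1^{-\theta}\phi\in C^{\beta+\mu}$ with $\beta>0$, $\mu\in(0,1]$, and when $\{\beta\}^++\mu\le 1$ one has $[\beta+\mu]^-=[\beta]^-$, so the $([\beta]^-+1)$-st derivative of $\phi$ need not exist; in the extreme case $\beta+\mu\le 1$ the function $\phi$ is merely H\"older continuous and the FTC identity itself is meaningless. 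The dichotomy on $\{\beta\}^++\mu$ cannot be deferred to ``the very last step'': it determines whether your representation is even available. The paper avoids this by never taking more than $[\beta]^-$ derivatives of $\phi$: it reduces the top-order term to the zeroth-order difference $\partial^\gamma\phi\circ\tilde H^{-1}-\partial^\gamma\phi$ for $|\gamma|=[\beta]^-$ and estimates its $\{\beta\}^+$-seminorm directly from two-point differences, splitting according to whether $|x-y|\ge L_4$ or $|x-y|<L_4$; only in the subcase $\{\beta\}^++\mu>1$ does it switch to gradient representations. To repair your argument you would need to replace the FTC step by such a direct difference estimate (or by a mollification argument with bounds uniform in the mollification parameter, which your intermediate quantities $[\nabla\partial^\gamma(r_1^{-\theta}\phi)]_{\{\beta\}^++\mu-1}$ and $|\nabla\partial^\gamma(r_1^{-\theta}\phi)|_0$ would not provide when those derivatives fail to exist).
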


\begin{proof}
(1) Since $(I_{d_1}+\nabla
H(x))^{-1}$ exists for each $x$, it follows from Theorem 0.2 in \cite{DeHoIm13} that the mapping $\tilde{H%
}$ is a global diffeomorphism. For each $x$, we easily verify $\tilde{H}%
^{-1}(x)=x-H(\tilde{H}^{-1}$$(x))$ by substituting $\tilde{H}(x)$ into the
expression. Simple computations show that for all $x$, we have 
\begin{equation*}
|\nabla \tilde{H}(x)|\leq 1+L_{2},\;\;|\nabla \tilde{H}^{-1}(x)|=|\kappa (%
\tilde{H}^{-1}(x))|\leq N_{\kappa },\;\;|\nabla F(x)|=|\nabla H(\tilde{H}%
^{-1}(x))\nabla \tilde{H}^{-1}(x)|\leq N_{\kappa }L_{2},
\end{equation*}%
and 
\begin{equation*}
|(I_{d_1}+\nabla F(x))^{-1}|=|\nabla \tilde{H}^{-1}(x)^{-1}|=|\kappa (\tilde{H}%
^{-1}(x))^{-1}|=|I_{d_1}+\nabla H(\tilde{H}^{-1}(x))|\leq 1+L_{2}.
\end{equation*}%
For all $x$ and $y$, we easily obtain 
\begin{equation*}
|\tilde{H}(x)-\tilde{H}(y)|\leq (1+L_{2})|x-y|,\quad |\tilde{H}^{-1}(x)-%
\tilde{H}^{-1}(y)|\leq N_{\kappa }|x-y|,
\end{equation*}%
and hence 
\begin{equation}
N_{\kappa }^{-1}|x-y|\leq |\tilde{H}(x)-\tilde{H}(y)|,\quad
(1+L_{2})^{-1}|x-y|\leq |\tilde{H}^{-1}(x)-\tilde{H}^{-1}(y)|.
\label{eq:boundedifferencefrombelow}
\end{equation}%
Making use of \eqref{eq:boundedifferencefrombelow}, for all $x$, we get 
\begin{equation}
N_{\kappa }^{-1}|x|\leq L_{0}+|\tilde{H}(x)|,\quad |\tilde{H}^{-1}(x)|\leq
N_{\kappa }L_{0}+N_{\kappa }|x|,\quad |x| \leq L_{0}+L_{1}|\tilde{H}^{-1}(x)|,  \label{ineq:boundinversegrowthabovebelow}
\end{equation}%
and thus 
\begin{equation*}
|F(x)|\leq L_{0}+L_{1}N_{\kappa }L_{0}+L_{1}N_{\kappa }|x|.
\end{equation*}
The rest of the estimates then follow easily from the above estimates and Taylor's theorem. 

\noindent (2) Using the chain rule, for all $x$, we obtain 
\begin{equation}
\nabla F(x)=-\nabla H(\tilde{H}^{-1}(x))\nabla \tilde{H}^{-1}(x)=-\nabla H(%
\tilde{H}^{-1}(x))\kappa (\tilde{H}^{-1}(x)),  \label{eq:gradientofF}
\end{equation}%
and hence $|\nabla F|_{0}\leq N_{\kappa }|\nabla H|_{0}.$ For all $x$ and $y$%
, we have 
\begin{equation}  \label{eq:diffofkappa}
\kappa (\tilde{H}^{-1}(y))-\kappa (\tilde{H}^{-1}(x))=\kappa (y)[\nabla H(%
\tilde{H}^{-1}(x))-\nabla H(\tilde{H}^{-1}(y))]\kappa (x),
\end{equation}%
and thus since $[\tilde{H}^{-1}]_{1}\leq(1+ N_{\kappa }L_{3})$ by part (1), we
have for all $\delta \in (0,1\wedge \beta ],$ 
\begin{equation*}
\lbrack \kappa (\tilde{H}^{-1})]_{\delta }\leq N_{\kappa }^{2}(1+ N_{\kappa }L_{3})^{\delta}[\nabla
H]_{\delta }.
\end{equation*}%
It follows that there is a constant $N=N(N_{\kappa },L_{3})$ such that for
all $\delta \in (0,1\wedge \beta ]$, 
\begin{equation*}
|\nabla F|_{\delta }\leq N|H|_{\delta }.
\end{equation*}%
It is well-known that the inverse map $\mathfrak{I}$ on the set of
invertible $d_1\times d_1$ matrices is infinitely differentiable and for
each $n$, there exists a constant $N=N(n,d_1)$ such that for all invertible
matrices $A $, the $n$-th derivative of $\mathfrak{I}$ evaluated at $A$,
denoted $\mathfrak{I}^{(n)}(A)$, satisfies 
\begin{equation*}
| \mathfrak{I}^{(n)}(A)| \leq N|A^{-n-1}|\leq N| A^{-1}| ^{n+1}.
\end{equation*}%
Using induction, we find that for all multi-indices $\gamma $ with $|\gamma
|\leq \lbrack \beta ]^{-}$ and for each $x$, $\partial ^{\gamma }F(x)$ is a
finite sum of terms, each of which is a finite product of 
\begin{equation*}
\partial ^{\bar{\gamma}}H(\tilde{H}^{-1}(x)),\quad \kappa (\tilde{H}%
^{-1}(x))^{\bar{n}},\quad \text{and}\quad \mathfrak{I}^{(\bar{n}%
-1)}(I+\nabla H(\tilde{H}^{-1}(x))),\;\;|\bar{\gamma}|\leq |\gamma |,\;\;%
\bar{n}\in \{1,\ldots ,|\gamma |\}.
\end{equation*}%
Therefore, differentiating \eqref{eq:gradientofF} and estimating directly,
we easily obtain \eqref{ineq:estimateofF}. \newline
\noindent (3) For each $x$, we have
\begin{align}  \label{eq:weightminus}
\frac{r_{1}(\tilde{H}^{-1}(x))^{\theta }}{r_{1}(x)^{\theta }}%
-1&=r_{1}(x)^{-\theta }\int_{0}^{1}r_{1}(G_s(x))^{\theta -2}G_s(x)^{* }F(x)ds
\notag \\
&=\int_{0}^{1}\frac{r_{1}^{\theta -1}( G_s(x)) }{r_{1}(x)^{\theta -1}}%
K(G_s(x))^*dsr_{1}(x)^{-1}F(x),
\end{align}%
where $G_s(x): =x+sF(x)$, $s\in [0,1]$, and $J(x):=r_{1}(x)^{-1}x.$
According to part (1) and (2), we have $| r_{1}^{-1}F| _{0}\leq N| r_{1}^{-1}H| _{0}$
and $|\nabla F|_{\beta -1}\leq N|\nabla H|_{\beta -1}$, and hence 
\begin{equation*}
| r_{1}^{-1}G_s| _{0} \leq N(1+| r_{1}^{-1}H| _{0}),\quad |\nabla
G_s(x)|_{\beta -1}\leq N(1+|\nabla H|_{\beta -1}).
\end{equation*}
and 
\begin{equation*}
| J\circ G_s| _{\beta }\leq N(1+| r_{1}^{-1}H| _{0}+|\nabla H|_{\beta -1}),
\end{equation*}%
for some constant $N$ independent of $s$.  
Moreover, using Lemma \ref{lem:GeneralCompWeightedlemma}, we find 
\begin{equation*}
| r_{1}^{\theta -1}\circ G_s \cdot r_1^{1-\theta }| _{\beta
}\leq N\left( 1+| r_{1}^{-1}H| _{0}+|\nabla H|_{\beta -1}\right) ^{\theta
+\beta }.
\end{equation*}%
The statement then  follows.

\noindent (4) First, we will consider the case $\theta=0$. By part (1), we have
 for each $\bar \mu\in
(0,(\beta+\mu )\wedge 1] $, 
\begin{equation}  \label{ineq:estimatePnotnorm}
|\phi\circ \tilde{H}^{-1}-\phi|_0\le [\phi]_{\bar \mu}|H\circ \tilde{H}%
^{-1}|_0^{\mu}\le  [\phi]_{\bar \mu}|H|_0^{\bar \mu} .
\end{equation}
First, let us consider the case  $\beta\le 1$. For each $x$, let $\mathcal{J}(x)=\phi(\tilde{H}%
^{-1}(x))-\phi(x)$. For all $x$ and $y$, it is clear that 
\begin{equation*}
|\mathcal{J}(x)-\mathcal{J}(y)|\le A(x,y)+B(x,y)+C(x,y),
\end{equation*}
where 
\begin{equation*}
A(x,y):=|\mathcal{J}(x)|\mathbf{1}_{[L_4,\infty)}(|x-y|),\quad B(x,y):=|%
\mathcal{J}(y)|\mathbf{1}_{[L_4,\infty)}(|x-y|),
\end{equation*}
and 
\begin{equation*}
C(x,y):=|\mathcal{J}(x)-\mathcal{J}(y)|\mathbf{1}_{[0,L_4)}(|x-y|).
\end{equation*}
Moreover, owing to part (1), if $\beta+\mu\le 1$, then for all  $x,$ and $%
y$, we have 
\begin{gather*}
A(x,y)\le [\phi]_{\beta+\mu }L_4^{\beta+\mu }\mathbf{1}_{[L_4,\infty)}(|x-y|)\le
[\phi]_{\beta+\mu } L_4^{\mu}|x-y|^{\{\beta\}^+},\\
B(x,y)\le [\phi]_{\beta+\mu}L_4^{\mu}|x-y|^{\beta },
\end{gather*}
and 
\begin{gather*}
C(x,y)\le [\phi]_{\beta+\mu }|[\tilde{H}^{-1}]^{\beta+\mu }_1|x-y|^{\beta+\mu }%
\mathbf{1}_{[0,L_4)}(|x-y|)+ [\phi]_{\beta+\mu }|x-y|^{\beta+\mu}\mathbf{1}_{[0,L_4)}(|x-y|)\\
\le N[\phi]_{\beta+\mu}L_4^{
\mu}|x-y|^{\beta}
\end{gather*}
for some constant $N=N(\mu,N_{\kappa},L_4)$. Using the identity 
\begin{gather*}
\mathcal{J}(x)-\mathcal{J}(y)\\
=-\int_0^1 \left(\nabla \phi\left(x-\theta H(\tilde{H}^{-1}(x))\right) - \nabla
\phi\left(y-\theta H(\tilde{H}^{-1}(y))\right)\right) H(\tilde{H}^{-1}(x))d\theta\\
- \int_0^1\nabla \phi\left(y-\theta H(\tilde{H}^{-1}(y))\right) (H(\tilde{H}^{-1}(y))-H(%
\tilde{H}^{-1}(x))),
\end{gather*}
and part (1), if  $\beta+\mu>1$, we get that there is a
constant $N=N(\mu,N_{\kappa},L_4)$ such that for all $x$ and $y$, 
\begin{gather*}
|\mathcal{J}(x)-\mathcal{J}(y)|\mathbf{1}_{[L_4,\infty)}(|x-y|)\le N([\nabla
\phi]_{\beta+\mu-1} |x-y|^{\beta+\mu-1}L_4+|\nabla \phi|_0|x-y|[H]_1)\mathbf{1}%
_{[L_4,\infty)}(|x-y|)\\
\le N[\nabla
\phi]_{\beta+\mu-1}L_4^{\mu}|x-y|^{\beta}+N|\nabla
\phi|_0|\nabla H|_0|x-y|.
\end{gather*}
Moreover, since
\begin{gather*}
\mathcal{J}(x)-\mathcal{J}(y)\\
=\int_0^1 \nabla \phi\left( \tilde{H}^{-1}(x+\theta(y-x))\right)\left(\nabla \tilde{H}%
^{-1}(x+\theta(y-x))-I_d\right)(x-y)d\theta\\
+\int_0^1 \left(\nabla \phi\left( \tilde{H}^{-1}\left(x+\theta(y-x)\right)\right)-\nabla \phi\left(
x+\theta(y-x)\right)\right)(x-y)d\theta,
\end{gather*}
by part (1) and \eqref{ineq:estimateofF}, if $\beta+\mu>1$, we have that there is a constant $N=N(\mu,N_{\kappa},L_4)$
such that for all $x$ and $y$, 
\begin{gather*}
|\mathcal{J}(x)-\mathcal{J}(y)|\mathbf{1}_{[0,L_4)}(|x-y|)\le (|\nabla
\phi|_0|\nabla H|_0+[\nabla \phi]_{\beta+\mu-1}L_4^{\beta+\mu-1})|x-y|\mathbf{1}%
_{[0,L_4)}(|x-y|)\\
\le |\nabla \phi|_0|\nabla H|_0|x-y|+[\nabla \phi]_{\beta+\mu-1}L_4^{\mu}|x-y|^{\beta}.
\end{gather*}
Combining the above estimates, we get that for all $\beta\le1$ and $%
\mu\in(0,1]$, there is a constant $N=N(\mu,N_{\kappa},L_4)$ such that 
\begin{equation}\label{ineq:mainestpart4}
[\phi\circ \tilde{H}^{-1}-\phi]_{\beta}\le  N\mathbf{1}_{[0,1]}(\beta+\mu)[\phi]_{\beta+\mu}L_4^{\mu}+N\mathbf{1}_{(1,2]}(\beta+\mu)\left([\nabla \phi]_{\beta+\mu-1}+|\nabla \phi|_{0}|\nabla H|_0\right).
\end{equation}
This proves the desired estimate for $\beta \le 1$ and $\theta=0$. We now consider the case $\beta>1$. For $\beta>1$, it is straightforward to prove by
induction that for all multi-indices $\gamma$ with $1\le |\gamma|\le
[\beta]^-$ and for all $x$, 
\begin{equation*}
\partial^{\gamma} (\phi(\tilde{H}^{-1}))(x)=\mathcal{J}^{\gamma}_1(x)+%
\mathcal{J}^{\gamma}_2(x)+\mathcal{J}^{\gamma}_3(x)+\mathcal{J}%
^{\gamma}_4(x),
\end{equation*}
where 
\begin{equation*}
\mathcal{J}^{\gamma}_1(x) :=\partial ^{\gamma }\phi(\tilde{H}^{-1}(x)),
\end{equation*}
\begin{equation*}
\mathcal{J}^{\gamma}_2(x)=\partial^{\gamma}\phi(\tilde{H}^{-1})(\partial_{1}%
\tilde{H}^{-1;1})^{\gamma_1}\cdots (\partial_{d}\tilde{H}^{-1;d})^{%
\gamma_d}-1,
\end{equation*}
$\mathcal{J}^{\gamma}_3(x)$ is a finite sum of terms of the form 
\begin{equation*}
\partial _{j_{1}}\cdots \partial _{j_{k}}\phi(\tilde{H}^{-1}(x))\partial ^{%
\tilde{\gamma}_{1}}\tilde{H}^{-1;j_{1}}(x)\cdots \partial ^{\tilde{\gamma}%
_{k}}\tilde{H}^{-1;j_{k}}(x)
\end{equation*}
with $1\le k<[\beta]^-$, $j_1,\ldots,j_k\in\{1,\ldots,d\}$, and $%
\sum_{j=1}^{k}\tilde{\gamma}_{j}=\gamma$, and $\mathcal{J}_4(x)$ is a finite
sum of terms of the form 
\begin{equation*}
\partial _{j_{1}}\ldots \partial _{j_{\left[ \beta \right] ^{-}}}\phi(\tilde{%
H}^{-1}(x))\partial _{i_{1}}\tilde{H}^{-1;j_{1}}(x)\cdots \partial _{i_{%
\left[ \beta \right] ^{-}}}\tilde{H}^{-1;j_{\left[ \beta \right] ^{-}}}(x)
\end{equation*}
with $i_1,j_1,\ldots,i_{[\beta]^-},j_{[\beta]^-}\in\{1,\ldots,d\}$ and at
least one pair $i_{k}\neq j_{k}$. Since for each $x$, 
\begin{equation*}
\nabla \tilde{H}^{-1}(x)=I+\nabla F(x)
\end{equation*}
and \eqref{ineq:estimateofF} holds, there is a constant $N=N(d_1,\beta)$
such that 
\begin{equation*}
\sum_{1\le |\gamma|\le \beta}\sum_{i=2}^{4}|\mathcal{J}^{\gamma}_{i}|_{0}+%
\sum_{|\gamma|=\beta}\sum_{i=2}^{4}|\mathcal{J}^{\gamma}_{i}|_{\{\beta\}^+}%
\leq N|\nabla \phi|_{\beta-1 }|\nabla F|_{\beta-1 } \le N|\nabla
\phi|_{\beta-1} |\nabla H|_{\beta-1 }.
\end{equation*}
If $\beta>2$, then for all multi-indices $\gamma$ with $1\le |\gamma|<[\beta]^-$, we get
\begin{equation*}
|\mathcal{J}^{\gamma}_1-\partial^{\gamma}\phi|_0=|\partial^{\gamma}\phi\circ 
\tilde{H}^{-1}-\partial^{\gamma}\phi|_0\le
[\partial^{\gamma}\phi]_{1}|H|_0.
\end{equation*}
It is easy to see that there is a constant $N=N(L_4,N_{\kappa})$ such that for all $\gamma$ with $|\gamma|=[\beta]^-$ and all $\bar \mu\in
(0,(\{\beta\}^++\mu )\wedge 1] $,
$$
|\mathcal{J}^{\gamma}_1-\partial^{\gamma}\phi|_{0}=|\partial^{%
\gamma}\phi\circ \tilde{H}^{-1}-\partial^{\gamma}\phi|_{0}\le [\partial^{\gamma}\phi]_{\bar \mu}|H|_0^{\bar \mu}.
$$
Moreover, appealing to the estimate \eqref{ineq:mainestpart4}, we obtain
\begin{gather*}
[\mathcal{J}^{\gamma}_1-\partial^{\gamma}\phi]_{\{\beta\}^+}\\
\le N\mathbf{1}_{[0,1]}(\{\beta\}^++\mu)[\partial^{\gamma}\phi]_{\{\beta\}^++\mu}L_4^{\mu}+N\mathbf{1}_{(1,2]}(\{\beta\}^++\mu)\left([\nabla \partial^{\gamma}\phi]_{\{\beta\}^++\mu-1}+|\nabla \partial^{\gamma} \phi|_{0}|\nabla H|_0\right) .
\end{gather*}
 Let us now
consider the case $\theta>0$. The following decomposition obviously holds
for all $x$: 
\begin{equation*}
r_1(x)^{-\theta}\phi(\tilde{H}^{-1}(x))-r_1(x)^{-\theta}\phi(x)=\hat{\phi}(%
\tilde{H}^{-1})-\hat{\phi}(x)+\left(\frac{r_1(\tilde{H}^{-1}(x))^{\theta}}{%
r_1(x)^{\theta}}-1\right)\hat{\phi}(\tilde{H}^{-1}(x)),
\end{equation*}
where $\hat{\phi}=r_1^{-\theta}\phi\in C^{\beta}( \mathbf{R}^{d_1};\mathbf{R}%
^{d_1}).$ Thus, to complete the proof we require
\begin{equation*}
|\hat{\phi}\circ \tilde{H}^{-1}|_{\beta}\le N|\hat{\phi}|_{\beta} \quad 
\text{and}\quad \left|\frac{r^{\theta}_1\circ\tilde{H}^{-1}}{r_1^{\theta}}%
-1\right|_{\beta}\le N(|H|_0+|\nabla H|_{\beta\vee 1-1}).
\end{equation*}
The latter inequality was proved in part (3) and the first inequality
follows from part (2) and Lemma \ref{lem:GeneralCompWeightedlemma}.
\end{proof}

\begin{remark}\label{rem:GradientSmallInverse}
Let $H:\bR^{d_1}\rightarrow\bR^{d_1}$ be continuously differentiable and assume that for all $x$, 
\begin{equation*}
|\nabla H(x)|\leq \eta <1.
\end{equation*}%
Then for each $x\in \mathbf{R}^{d_1}$, 
\begin{equation*}
|(I_{d_1}+\nabla H(x))^{-1}|\leq | I_{d_1}+\sum_{k=1}^{\infty }(-1)^{k}\nabla H(x)^{k}| \leq 
\frac{1}{1-\eta }.
\end{equation*}
\end{remark}

\subsection{Stochastic Fubini thoerem}

Let $m=(m^{\varrho})_{t\le T}$, $\varrho\ge 1$, be a sequence of $\bF$-adapted locally square integrable continuous martingales issuing from zero such that $\bP$-a.s.\ for all $t\in [0,T]$, $\langle m^{\varrho_1},m^{\varrho_2}\rangle_t=0$ for $\varrho_1\ne \varrho_2$ and $\langle m^{\varrho}\rangle _t=N_t$ for $\varrho\ge 1$, where $N_t$ is a $\cP_T$-measurable continuous increasing processes issuing from zero.    Let $\eta(dt,dz)$ be a $\bF$-adapted  integer-valued random  measure on  $([0,T]\times E,\cB([0,T])\otimes \cE)$, where $(U,\cU)$ is a Blackwell space.  We assume that $\eta(dt,dz)$ is optional,  $\tilde{\cP}_T$-sigma-finite, and quasi-left continuous.  Thus, there exists a unique (up to a $\bP$-null set) dual predictable projection (or compensator) $\eta^p(dt,dz)$ of $\eta(dt,dz)$ such that $\mu(\omega,\{t\}\times U)=0$ for all $\omega$ and $t$. We refer the reader to Ch.  II, Sec. 1, in \cite{JaSh03} for any unexplained concepts relating to random measures.

Let $(X,\Sigma,\mu)$ be a sigma-finite measure space; that is, there is an increasing sequence of $\Sigma$-measurable sets $X_n$, $n\in \bN$, such that $X=\cup_{n=1}^{\infty} X_n$ and $\mu(X_n)<\infty$ for each $n$.   Let $f:\Omega\times [0,T] \times X\rightarrow \bR^{d_2}$ be $\mathcal{R}%
_{T}\otimes \Sigma$-measurable, $g:\Omega\times [0,T]
\times X\rightarrow \ell_{2}(\bR^{d_2})$ be $\mathcal{R}_{T}\otimes 
\Sigma/\mathcal{B}(\ell_{2}(\bR^{d_2}))$-measurable,  and $%
h:\Omega\times [0,T] \times X\times U\rightarrow \bR^{d_2}$
be $\mathcal{P}_{T}\otimes \Sigma\otimes \mathcal{U}$%
-measurable. Moreover, assume that for each $t\in [0,T]$ and $x\in X$, $\bP$-a.s.\ 
\begin{equation*}
\int_{]0,T]}|g_t(x)|^{2}dN_t+\int_{]0,T]}\int_{U}|h_t(x,z)|^{2}\eta^p(dt,dz)<\infty.
\end{equation*}%
Let  $F=F_t(x):\Omega\times [0,T]\times X\rightarrow \bR^{d_2}$ be $\cO_T\otimes \cB(X)$-measurable  and assume that for $d\bP\otimes \mu$-almost all $(t,x)\in [0,T]\times X$, 
$$
F_t(x) =\int_{]0,t]}g^{\varrho}_s(x)dm^{\varrho}_s+\int_{]0,t]}\int_{U}h_s(x,z)\tilde{\eta}(dt,dz)
$$
where $\tilde{\eta}(dt,dz)=\eta(dt,dz)-\eta^p(dt,dz)$.

The following version of the stochastic Fubini theorem is a straightforward extension of Lemma 2.6 \cite{Kr11}  and  Corollary 1 in 
\cite{Mi83}. See also Proposition 3.1 in \cite{Zh13f}, Theorem 2.2 in \cite{Ve12}, and Theorem 1.4.8 in  \cite{Ro90}. Indeed, to prove it for a bounded measure, we can use   a  monotone class argument as in Theorem 64 in \cite{Pr05}. To handle the general setting with possibly infinite $\mu$, we use assumptions (ii) and (iii) below and take limits on the sets $X_n$ using the Lenglart domination lemma  Lenglart domination lemma (Theorem 1.4.5 on page 66 in \cite{LiSh89}) and the following well known inequalities:   
$$
\bE \sup_{t\le T}\left|\int_{]0,t]} g_s^{\varrho}dw_s^{\varrho}\right|\le N\bE \left(\int_{]0,T]} |g_t(x)|^2dw_t^{\varrho}\right)^{1/2}
$$
$$
\bE \sup_{t\le T}\left|\int_{]0,t]} \int_{U}h_t(x,z)\tilde{\eta}(dt,dz)\right|\le N\bE \left(\int_{]0,T]} \int_{U}|h_t(x,z)|^2\eta^p(dt,dz)\right)^{1/2},
$$
where $\tau\le T$ is an arbitrary stopping time and  $N=N(T)$  is a constant independent of $g$ and $h$. 

\begin{proposition}[c.f. Corollary 1 in 
\cite{Mi83} and Lemma 2.6 in \cite{Kr11}]
Assume that 
\begin{tightenumerate}
\item  $\bP$-a.s.\ for each $n\ge 1$,
\begin{equation}\label{asm:FubiniAssumptiongen}
\int_{X_n}\left(\int_{]0,T]}|g_t(x)|^2dN_t\right)^{1/2}\mu(dx) +\int_{X_n}\left(\int_{]0,T]}\int_{U_1}|h_t(x,z)|^2\eta^p(dt,dz)\right)^{1/2}\mu(dx)<\infty;
\end{equation}
\item $\bP$-a.s.\ 
$$
\int_{]0,T]} \left(\int_X |g_t(x)|\mu(dx)\right)^2dt+\int_{]0,T]} \int_U  \left(\int_X |h_t(x,z)|\mu(dx)\right)^2\eta^p(dt,dz);
$$
\item $\bP$-a.s.\ for al $t\in [0,T]$,
$$
\int_X |F_t(x)|\mu(dx)<\infty.
$$
\end{tightenumerate}
Then 
$\bP$-a.s.\  for all $t\in [0,T]$,
\begin{equation}\label{eq:stochasticFubinigen}
\int_{X}F_t(x)\mu(dx) =\int_{]0,t]}\int_{X}g^{\varrho}_s(x)\mu(dx)dm^{\varrho}_{s}+\int_{]0,t]}\int_{U}\int_{X}h_s(x,z)\mu(dx)\tilde{\eta}(dr,dz)
\end{equation}
\end{proposition}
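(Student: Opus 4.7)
The plan is to reduce to the finite-measure case via the sigma-finite exhaustion $X=\bigcup_n X_n$, prove the identity there by a monotone-class argument, and then pass to the limit $n\to\infty$ using a Lenglart/BDG tail estimate.

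For fixed $n$, since $\mu(X_n)<\infty$, assumption (i) provides precisely the $L^1(\mu|_{X_n})$-integrability of the stochastic-integral $L^2$-norms of $g$ and $h$ needed to run the classical stochastic Fubini theorem (cf. Theorem 64 in \cite{Pr05} and Corollary 1 in \cite{Mi83}). Its proof is a monotone-class argument: for $g_t(x)=\mathbf{1}_A(x)\tilde g_t$ with $A\in\Sigma$ of finite measure and $\tilde g$ a suitably integrable predictable process (and analogously for $h$), identity \eqref{eq:stochasticFubinigen} is just linearity of stochastic integration, and one then closes the admissible class under monotone limits using the $L^2$-isometry for $m^\varrho$ together with Lemma \ref{i:Lp Burkholder for Random Measure} for $\tilde\eta$. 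This yields, $\bP$-a.s.\ for all $t\in[0,T]$,
\begin{equation*}
\int_{X_n}F_t(x)\mu(dx)=\int_{]0,t]}\!\int_{X_n}g_s^\varrho(x)\mu(dx)\,dm_s^\varrho+\int_{]0,t]}\!\int_U\!\int_{X_n}h_s(x,z)\mu(dx)\,\tilde\eta(ds,dz).
\end{equation*}

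I would then send $n\to\infty$ termwise. Assumption (iii) and dominated convergence give $\int_{X_n}F_t\,d\mu\to\int_X F_t\,d\mu$ $\bP$-a.s.\ for each $t$. For the right-hand side I would localize via the stopping times
\begin{equation*}
\sigma_k=\inf\Big\{t\le T:\int_{]0,t]}\Big(\int_X|g_s|\mu(dx)\Big)^2 ds+\int_{]0,t]}\int_U\Big(\int_X|h_s(\cdot,z)|\mu(dx)\Big)^2\eta^p(ds,dz)>k\Big\}\wedge T,
\end{equation*}
which by (ii) satisfy $\sigma_k\uparrow T$ $\bP$-a.s. On $[0,\sigma_k]$, the Burkholder--Davis--Gundy inequality for the continuous martingale part and Lemma \ref{i:Lp Burkholder for Random Measure}(1) for the random-measure part yield
\begin{equation*}
\bE\sup_{t\le\sigma_k}\Big|\int_{]0,t]}\!\int_{X\setminus X_n}g_s^\varrho\mu(dx)\,dm_s^\varrho\Big|\le N\,\bE\Big(\int_{]0,\sigma_k]}\Big(\int_{X\setminus X_n}|g_s|\mu(dx)\Big)^2 ds\Big)^{1/2},
\end{equation*}
and analogously for the $\tilde\eta$-integral; both right-hand sides vanish as $n\to\infty$ by dominated convergence built on (ii). Hence the right-hand stochastic integrals over $X_n$ converge uniformly on $[0,\sigma_k]$ in probability to those over $X$, and Lenglart's domination lemma removes the localization as $k\to\infty$.

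The main technical point I anticipate is the delicate interplay between the two limits under the merely $\bP$-a.s.\ hypotheses (i)--(iii): (i) powers the monotone-class argument on each finite piece $X_n$; (ii) is precisely what makes the tail bounds vanish uniformly in $n$; (iii) guarantees the pointwise LHS limit exists. Upgrading the identity from holding $\bP$-a.s.\ for each fixed $t$ to holding simultaneously for all $t$ on a single full-measure event then uses the c\`adl\`ag modifications of both sides, after which \eqref{eq:stochasticFubinigen} follows.
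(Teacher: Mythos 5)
Your proposal is correct and follows essentially the same route the paper sketches: a monotone-class argument (as in Theorem 64 of \cite{Pr05}) for the finite-measure pieces $X_n$ under assumption (i), followed by passage to the limit in $n$ using the $L^1$-type BDG estimates for the martingale and compensated-measure integrals together with the Lenglart domination lemma, with (ii) driving the vanishing of the tails over $X\setminus X_n$ and (iii) handling the left-hand side. The only cosmetic difference is that you make the localization by the stopping times $\sigma_k$ explicit before invoking Lenglart, which is exactly the intended reading of the paper's sketch.
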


We obtain the following corollary by applying Minkowski's integral inequaility.  

\begin{corollary}\label{cor:StochasticFubini}
Assume that $\bP$-a.s.\
 \begin{equation}\label{asm:FubiniAssumption}
\int_{X}\left(\int_{]0,T]}|g_t(x)|^2dN_t\right)^{1/2}\mu(dx) +\int_{X}\left(\int_{]0,T]}\int_{U_1}|h_t(x,z)|^2\eta^p(dt,dz)\right)^{1/2}\mu(dx)<\infty.
\end{equation}
Then $\bP$-a.s.\  for all $t\in [0,T]$,
\begin{equation}\label{eq:stochasticFubini}
\int_{X}F_t(x)\mu(dx) =\int_{]0,t]}\int_{X}g^{\varrho}_s(x)\mu(dx)dm^{\varrho}_{s}+\int_{]0,t]}\int_{U}\int_{X}h_s(x,z)\mu(dx)\tilde{\eta}(dr,dz).
\end{equation}
\end{corollary}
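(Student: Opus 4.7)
The plan is to deduce Corollary \ref{cor:StochasticFubini} from the preceding Proposition by verifying its three hypotheses under the strengthened integrability hypothesis \eqref{asm:FubiniAssumption}. The main tool is Minkowski's integral inequality, supplemented by a stopping-time localization for the pathwise integrability condition (iii).

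Hypothesis (i) of the Proposition is immediate: since $X_n\subset X$ and the integrands in \eqref{asm:FubiniAssumptiongen} are nonnegative, the $X$-integral in \eqref{asm:FubiniAssumption} bounds the corresponding $X_n$-integral for every $n$. For hypothesis (ii), Minkowski's integral inequality, applied to the product measure $dN_t\otimes\mu(dx)$ on $]0,T]\times X$, yields
\begin{equation*}
\left(\int_{]0,T]}\left(\int_X|g_t(x)|\mu(dx)\right)^{2}dN_t\right)^{1/2}\le \int_X\left(\int_{]0,T]}|g_t(x)|^{2}dN_t\right)^{1/2}\mu(dx),
\end{equation*}
and the analogue with $\eta^p(dt,dz)\otimes\mu(dx)$ controls the $h$-term. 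Both right-hand sides are $\bP$-a.s.\ finite by \eqref{asm:FubiniAssumption}, so hypothesis (ii) holds.

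For hypothesis (iii), I would localize. Set
\begin{equation*}
\tau_k=\inf\Big\{t\in[0,T]:\int_X\big(\textstyle\int_{]0,t]}|g|^{2}dN\big)^{1/2}\mu(dx)+\int_X\big(\textstyle\int_{]0,t]}\int_U|h|^{2}d\eta^p\big)^{1/2}\mu(dx)\ge k\Big\}\wedge T,
\end{equation*}
which satisfy $\tau_k\uparrow T$ $\bP$-a.s.\ by \eqref{asm:FubiniAssumption}. For the stopped processes $g\mathbf{1}_{\{t\le\tau_k\}}$ and $h\mathbf{1}_{\{t\le\tau_k\}}$, the Burkholder--Davis--Gundy inequality for continuous and for compensated jump martingales, combined with deterministic Fubini against $\mu$, gives
\begin{equation*}
\mathbf{E}\int_X\sup_{t\le\tau_k}|F_t(x)|\mu(dx)\le Nk,
\end{equation*}
so $\int_X\sup_{t\le\tau_k}|F_t(x)|\mu(dx)<\infty$ $\bP$-a.s.\ and hypothesis (iii) is satisfied on the stopped interval.

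Applying the Proposition to the stopped data $(g\mathbf{1}_{\{t\le\tau_k\}},h\mathbf{1}_{\{t\le\tau_k\}})$ yields \eqref{eq:stochasticFubini} on $[0,\tau_k]$, and since the stochastic integrals on the right-hand side of \eqref{eq:stochasticFubini} are already globally well defined on $[0,T]$ (by the Minkowski estimate from step (ii)), letting $k\to\infty$ propagates \eqref{eq:stochasticFubini} to all of $[0,T]$ $\bP$-a.s. The main (mild) obstacle is the localization argument for hypothesis (iii), which becomes routine once one notes the monotonicity in $t$ of the quantity defining $\tau_k$ together with $\tau_k\uparrow T$.
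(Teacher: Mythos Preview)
Your proposal is correct and follows the same route the paper indicates: the paper's entire proof is the one line ``We obtain the following corollary by applying Minkowski's integral inequality,'' and you have simply fleshed out how Minkowski verifies the hypotheses of the preceding Proposition. Your treatment of (i) and (ii) is exactly what is intended; your localization argument for (iii) via the stopping times $\tau_k$ together with BDG and Tonelli is a standard and correct way to supply the detail that the paper leaves implicit (note that $N_t$ is continuous and $\eta^p$ has no atoms in time, so the defining quantity for $\tau_k$ is continuous and the bound $\le k$ holds at $\tau_k$, and finiteness of \eqref{asm:FubiniAssumption} gives $\tau_k=T$ eventually, not merely $\tau_k\uparrow T$).
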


\begin{remark}\label{rem:stochasticfubini}
If $\mu$ is a finite-measure and  $\bP$-a.s.\
$$
\int_{X}\int_{]0,T]}|g_t(x)|^2dN_t\mu(dx)+\int_{X}\int_{]0,T]}\int_{U_1}|h_t(x,z)|^2\eta^p(dt,dz)\mu(dx)<\infty,
$$ 
then  \eqref{asm:FubiniAssumption} holds by H{\"o}lder's inequality. 
\end{remark}

\subsection{It\^{o}-Wentzell formula}

\begin{definition}
We say that an $\bR^{d_1}$-valued $\mathbf{F}$-adapted quasi-left continuous semimartingale $L_{t}=(L_t^k)_{1\le k\le d_1}$, $t\ge 0$,
is of $\alpha $-order for some  $\alpha \in (0,2]$ if $\bP$-a.s.\ for all $t\ge 0$,
$$
\sum_{s\leq t}|\Delta L_{s}|^{\alpha }<\infty
$$
and 
\begin{align*}
L_{t} &=L_{0}+\int_{]0,t]}\int_{\mathbf{R}_0^{d_1}} z p^{L}(ds,dz),\text{ if }%
\alpha \in (0,1), \\
L_{t} &=L_{0}+A_{t}+\int_{]0,t]}\int_{| z| \leq 1}z
q^{L}(ds,dz)+\int_{]0,t]}\int_{| z| >1}z p^{L}(ds,dz), 
\text{ if }\alpha \in \lbrack 1,2), \\
L_{t} &=L_{0}+A_{t}+L_{t}^{c}+\int_{]0,t]}\int_{| z|
\leq 1}z q^{L}(ds,dz)+\int_{]0,t]}\int_{| z| >1}z
p^{L}(ds,dz), \text{ if }\alpha =2,
\end{align*}%
where $p^{L}(dt,dz)$ is the jump measure of $L$ with dual predictable
projection $\pi^L (dt,dz)$, $q^{L}$ $(dt,dz)$ $=p^{L}(dt,dz) -\pi^L (dt,dz)$
is a martingale measure, $A_{t}=(A_t^i)_{1\le i\le d_1}$ is a continuous process of finite
variation with $A_0=0$, and $L_{t}^{c}=(L_t^{c;i})_{1\le i\le d_1}$ is a continuous local
martingale issuing from zero.
\end{definition}

Set $(w^{\varrho})_{\varrho\ge 1}=(w^{1;\varrho})_{\varrho\ge 1}$, $(Z, \cZ,\pi )=(\cZ^{1},\cZ^{1},\pi ^{1}),$ $%
p(dt,dz )=p^{1}(dt,dz )$, and $q(dt,dz )=q^{1}(dt,dz )$.  Also, set $D=D^{1}$, $E=E^{1},$ and assume $Z=D\cup E$.

Let $f:\Omega\times [0,T] \times \bR^{d_1}\rightarrow \bR^{d_2}$ be $\mathcal{R}%
_{T}\otimes \cB(\bR^{d_1})$-measurable, $g:\Omega\times [0,T]
\times \bR^{d_1}\rightarrow \ell_{2}(\bR^{d_2})$ be $\mathcal{R}_{T}\otimes 
\cB(\bR^{d_1})/\mathcal{B}(\ell_{2}(\bR^{d_2}))$-measurable,  and $%
h:\Omega\times [0,T] \times \bR^{d_1}\times Z\rightarrow \bR^{d_2}$
be $\mathcal{P}_{T}\otimes \cB(\bR^{d_1})\otimes \cZ$%
-measurable. Moreover, assume that, $\bP$-a.s.\ for all $x\in \bR^{d_1}$,
\begin{equation*}
\int_{]0,T]}|f_t(x)|dt+\int_{]0,T]}|g_t(x)|^{2}dt+\int_{]0,T]}\int_{D}|h_t(x,z)|^{2}\pi(dz)dt+\int_{]0,T]}\int_{E}|h_t(x,z)|\pi(dz)dt<\infty.
\end{equation*}%
Let $F=F_t(x):\Omega\times [0,T]\times \bR^{d_1}\rightarrow \bR^{d_2}$ be $\cO_T\otimes \cB(\bR^{d_1})$-measurable and  assume that for each $x$, $\bP$-a.s.\ for all $t$,
$$
F_t(x) =F_0(x)+\int_{]0,t]}f_s(x)ds+\int_{]0,t]}g^{\varrho}_s(x)dw^{\varrho}_{s}+\int_{]0,t]}\int_{Z}h_s(x,z)[\mathbf{1}_{D}(z)q(ds,dz) +\mathbf{1}_{E}(z)p(ds,dz)].
$$
For each $n\in \{1,2\}$, let $C ^{n}_{loc}(\bR^{d_1};\bR^{d_2})$ be space of $n$-times continuously differentiable functions $f:\bR^{d_1}\rightarrow \bR^{d_2}$.  We now state our version of the It{\^o}-Wentzell formula. For each $\omega,t$ and $x$, we denote $\Delta F(x)=F_t(x)-F_{t-}(x)$.

\begin{proposition}[cf. Proposition 1 in \cite{Mi83} ]
\label{prop:Ito-Ventzel} Let $(L_t)_{t\ge 0}$ be an $\bR^{d_1}$-valued quasi-left continuous
semimartingale of  order $\alpha \in (0,2]$.  Assume  that:
\begin{tightenumerate}
\item
\begin{enumerate}[(a)]
\item  $\bP$-a.s.\  $F\in D([0,T];\cC_{loc}^{\alpha}(\bR^d;\bR^m)$ if $\alpha$ is fractional and $F\in D([0,T];\allowbreak C_{loc}^{\alpha}\allowbreak (\bR^d;\bR^m)$ if $\alpha=1,2$ ;
\item   for $d\bP dt$-almost-all $(\omega,t)\in \Omega\times [0,T]$,  $f_t(x)$ and $g_t(x)=(g^{i\varrho}_t(x))_{\varrho\ge 1}\in \ell_2(\bR^{d_2})$ are continuous in $x$ and 
$$
d\bP dt-\lim_{y\rightarrow x}\left[\int_{D}|h_t(y,z)-h_t(x,z)|
^{2}\pi (dz) +\int_{E}|h_t(y,z)-h_t(x,z)|\pi (dz)\right]= 0;
$$
\item for each $\rho\ge1$ and $i\in\{1,\ldots,d_1\}$ and for $d\bP d |\langle L^{c;i},w^{\varrho}\rangle|_t$-almost-all $(\omega,t)\in \Omega\times [0,T]$,  $g^{i\varrho}_t\in C^1_{loc}(\bR^d;\bR)$, if $\alpha= 2$, ;
\end{enumerate}
\item for each compact subset $K$ of $\bR^{d_1}$, $\bP$-a.s.\
$$
\int_{]0,T]}\sup_{x\in K} \left(|f_t(x)|+|g_t(x)|^2+\int_{D}|h_t(x,z)|^2\pi(dz)+ \int_{E}|h_t(x,z)|\pi(dz)\right)dt<\infty,
$$
$$
\sum_{\varrho\ge 1}\int_{]0,T]}\sup_{x\in K}| \nabla g^{i\varrho}_t(x)|d|\langle L^{c;i},w^{\varrho}\rangle|_t+
\sum_{t\le T}|\Delta F_t|_{\alpha\wedge 1;K}|\Delta L_t|^{\alpha\wedge 1}<\infty.
$$
\end{tightenumerate}
Then $\bP$-a.s for all $t\in [0,T]$,
\begin{align}\label{eq:Ito-Wentzellformula}
F_t(L_t)&=F_0(L_{0})+\int_{]0,t]}f_s(L_{s})ds+\int_{]0,t]}g_s^{\varrho}
(L_{s})dw^{\varrho}_{s}\notag\\
&\quad +\int_{]0,t]}\int_{Z}h_s(L_{s-},z)[\mathbf{1}_{D}(z)q(dr,dz) +\mathbf{1}_{E}(z)p(dr,dz)]\notag\\
&\quad +\int_{]0,t]}\partial_i F_{s-}(L_{s-})[\mathbf{1}_{[1,2]}(\alpha)dA^i_s+\mathbf{1}_{\{2\}}(\alpha)dL ^{c;i}_s]+\mathbf{1}_{\{2\}}(\alpha)\frac{1}{2}\int_{]0,t]}\partial_{ij}F_s(L_{s})d\langle
L^{c;i},L^{c;j}\rangle _{s}\notag\\
&\quad +\sum_{s\le t}\left(F_{s-}(L_{s})-F_{s-}(L_{s-})-\mathbf{1}_{[1,2]}(\alpha)\nabla F_{s-}(L_{s-})\Delta L_s\right)\notag\\
&\quad +\mathbf{1}_{\{2\}}(\alpha)\int_{]0,t]}\partial_ig^{\varrho}_{s}(L_{s})d\langle w^{\varrho},L^{c;i}\rangle _{s}+\sum_{s\leq t}\left(\Delta F_s(L_{s})-\Delta F_s(L_{s-})\right).
\end{align}
\end{proposition}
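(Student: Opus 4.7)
The plan is to prove the Itô--Wentzell formula by the standard scheme of localization, spatial mollification of $F$, application of the classical Itô formula for jump semimartingales to the smoothed field, and passage to the limit, following Proposition 1 in \cite{Mi83}. First I would localize using stopping times $\tau_R = \inf\{t : |L_t| > R \text{ or } \sum_{s\le t}|\Delta L_s|^{\alpha\wedge 1}|\Delta F_s|_{\alpha\wedge 1; K_R} > R \text{ or the integrals in (ii) exceed } R\} \wedge T$ with $K_R = \overline{B_R}$; since $\tau_R \uparrow T$ as $R \uparrow \infty$, it suffices to prove the identity on $[[0, \tau_R]]$ for each fixed $R$, on which everything is uniformly bounded on $K_R$.

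Next, I would regularize $F$ in space: pick a standard mollifier $\phi^{\epsilon}$, set $F^{\epsilon}_t(x) = (F_t \ast \phi^{\epsilon})(x)$, $f^{\epsilon}_t(x) = (f_t \ast \phi^{\epsilon})(x)$, $g^{\epsilon}_t(x) = (g_t \ast \phi^{\epsilon})(x)$, and $h^{\epsilon}_t(x,z) = (h_t(\cdot, z) \ast \phi^{\epsilon})(x)$. An application of Corollary~\ref{cor:StochasticFubini} for each fixed $x$ shows that
\begin{equation*}
F^{\epsilon}_t(x) = F^{\epsilon}_0(x) + \int_{]0,t]} f^{\epsilon}_s(x)\,ds + \int_{]0,t]} g^{\epsilon,\varrho}_s(x)\,dw^{\varrho}_s + \int_{]0,t]}\int_Z h^{\epsilon}_s(x,z)[\mathbf{1}_D q(ds,dz) + \mathbf{1}_E p(ds,dz)],
\end{equation*}
and by construction $x \mapsto F^{\epsilon}_t(x)$ is smooth with derivatives bounded on $K_R$ in terms of standard seminorms of $F_t$.

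Now, with $F^{\epsilon}$ smooth in $x$, I would apply the classical Itô formula for jump semimartingales to $F^{\epsilon}_t(L_t)$ via the Meyer-style partition device: for a mesh $\{t^n_k\}$ of $[0,t]$ write
\begin{equation*}
F^{\epsilon}_{t^n_{k+1}}(L_{t^n_{k+1}}) - F^{\epsilon}_{t^n_k}(L_{t^n_k}) = \bigl[F^{\epsilon}_{t^n_{k+1}}(L_{t^n_{k+1}}) - F^{\epsilon}_{t^n_{k+1}}(L_{t^n_k})\bigr] + \bigl[F^{\epsilon}_{t^n_{k+1}}(L_{t^n_k}) - F^{\epsilon}_{t^n_k}(L_{t^n_k})\bigr].
\end{equation*}
Summing and refining the mesh, the first bracket produces (via the classical spatial Itô expansion for the jump semimartingale $L$) the terms $\int \partial_i F^{\epsilon}_{s-}(L_{s-})[\mathbf{1}_{[1,2]}(\alpha)dA^i_s + \mathbf{1}_{\{2\}}(\alpha)dL^{c;i}_s]$, the quadratic-variation term $\tfrac{1}{2}\int \partial_{ij}F^{\epsilon}_s(L_s)d\langle L^{c;i}, L^{c;j}\rangle_s$, and the compensated jump sum. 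The second bracket, evaluated at the frozen point $L_{t^n_k}$, produces the $f^{\epsilon}$, $g^{\epsilon}$, $h^{\epsilon}$ integrals. The cross-terms between the brackets yield the covariation $\int \partial_i g^{\epsilon,\varrho}_s(L_s) d\langle w^{\varrho}, L^{c;i}\rangle_s$ (active only when $\alpha = 2$) and the joint-jump correction $\sum_{s\le t}(\Delta F^{\epsilon}_s(L_s) - \Delta F^{\epsilon}_s(L_{s-}))$, reproducing all the terms of \eqref{eq:Ito-Wentzellformula} with $F^{\epsilon}$ in place of $F$.

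The final step is to pass $\epsilon \to 0$. Uniform convergence of $F^{\epsilon}_t$ and its relevant derivatives to $F_t$ on $K_R$ in the appropriate $\cC^{\alpha}$ (or $C^{\alpha}$) topology handles the $ds$, $dA$, $dL^c$ and quadratic-variation integrals by dominated convergence, while the $L^2$-type convergences $g^{\epsilon} \to g$ and $h^{\epsilon} \to h$ (justified by the continuity assumption (i)(b) and the integrability in (ii)) give convergence of the stochastic and compensated Poisson integrals through Lenglart's domination lemma and the moment estimates of Lemma~\ref{i:Lp Burkholder for Random Measure}. The main obstacle is the absolute convergence of the jump-correction sums when $\alpha$ is fractional: the elementary Taylor-type bound
\begin{equation*}
|\phi(y) - \phi(x) - \mathbf{1}_{[1,2]}(\alpha)\nabla\phi(x)\cdot(y-x)| \le N[\phi]_{\alpha\wedge 1; K_R}|y-x|^{\alpha}
\end{equation*}
(used with $\phi = F_{s-}$, $x = L_{s-}$, $y = L_s$) together with the $\alpha$-order assumption $\sum_{s\le T}|\Delta L_s|^{\alpha} < \infty$ produces a summable dominating series, allowing dominated convergence along $\epsilon \to 0$; similarly the joint-jump sum is controlled by $|\Delta F_s|_{\alpha\wedge 1; K_R}|\Delta L_s|^{\alpha\wedge 1}$, whose summability is precisely the last hypothesis of (ii). A secondary technicality is to keep the three regimes $\alpha \in (0,1)$, $\alpha \in [1,2)$, $\alpha = 2$ separate throughout, since the compensation convention for the small jumps of $L$ (and hence the presence of the $\nabla F \cdot \Delta L$ correction) differs in each case, but this is purely bookkeeping once the limit argument above is set up.
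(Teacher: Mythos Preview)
Your approach is correct in outline but differs from the paper at the central step. After localization you mollify $F$ to get $F^\varepsilon$ and then propose to establish the It\^o--Wentzell identity for $F^\varepsilon_t(L_t)$ via a Meyer-style partition argument, identifying the limits of the spatial, temporal, and cross terms over a refining mesh. The paper bypasses this entirely: for each fixed $x$ it applies the \emph{ordinary} It\^o product rule to $F_t(x)\,\phi_\varepsilon(x-L_t)$ --- this is just It\^o's formula for the smooth function $(u,v)\mapsto u\,\phi_\varepsilon(x-v)$ of the semimartingale pair $(F_t(x),L_t)$ --- then integrates the resulting identity over $x\in\bR^{d_1}$, invokes the stochastic Fubini theorem (Corollary~\ref{cor:StochasticFubini}) to exchange the $dx$-integral with the stochastic integrals, and integrates by parts in $x$ to move the derivatives from $\phi_\varepsilon$ onto $F$. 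Since $\int F_t(x)\phi_\varepsilon(x-L_t)\,dx=F^{(\varepsilon)}_t(L_t)$, this produces the mollified It\^o--Wentzell formula \emph{directly}, with no partition limit to justify. Your route is heavier --- the mesh-limit argument for jump semimartingales, including the identification of the joint-jump and cross-variation terms, must be carried out by hand --- whereas the paper's device reduces everything to the classical It\^o formula in one stroke. The $\varepsilon\to0$ passage and your control of the jump sums via the H\"older-type Taylor remainder together with the $\alpha$-order hypothesis agree with the paper's argument. (The paper also first removes jumps of $L$ of size at least $1$ by interlacing, a minor preliminary reduction you do not mention.)
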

\begin{proof}
Since both sides have identical jumps and we can always interlace a finite set of jumps,  we may assume that  $|\Delta L_{t}|\le 1$ for all $t\in [0,T]$; that is, 
it is enough to prove the statement for $\tilde{L}_{t}=L_{t}-\sum_{s\leq
t}\mathbf{1}_{[1,\infty)}(|\Delta L_s|)\Delta L_{s}$, $t\in [0,T]$.  It suffices to assume that for some $K$ and all $\omega$, $|L_0|\le K$. For each  $R>K$, let $$
\tau_R=\inf \left( t\in [0,T]:|A|_t+|\langle L^c\rangle| _t+\sum_{s\le t}|\Delta L_s|^{\alpha}+|L_{t}|>
R\right) \wedge T$$  and  note that  $\bP$-a.s.\ $\tau_R\uparrow T$ as $R$ tends to infinity. 
If instead of $L, f, g, h,$ and $F$, we take $L_{\cdot\wedge \tau_R}$, $f\mathbf{1}_{(0,\tau_R]}$, $g^{\varrho}\mathbf{1}_{(0,\tau_R]}$,  $h\mathbf{1}_{(0,\tau_R]}$, $F\mathbf{1}_{(0,\tau_R]}$, then the assumptions  of the proposition hold for this new set of processes. Moreover, if we can prove \eqref{eq:Ito-Wentzellformula} for this new set of processes, then by taking the limit as $R$ tends to infinity, we obtain \eqref{eq:Ito-Wentzellformula}. Therefore, we may assume that for some $R>0$, $\bP$-a.s.\ for all $t\in [0,T]$,
\begin{equation}\label{ineq:boundsfubini}
|A|_t+|\langle L^c\rangle|_t+\sum_{s\le t}|\Delta L_s|^{\alpha}+|L_{t}|\le R.
 \end{equation}

Let $\phi\in C_{c}^{\infty }( \bR^{d_1},\mathbf{R})$ have support in the unit ball in $\bR^{d_1}$ and  satisfy  $\int_{%
\bR^{d_1}}\phi(x)dx=1,\phi(x)=\phi(-x),$ and $\phi(x)\geq 0$, for all $x\in\bR^{d_1}$. For
each $\varepsilon \in(0,1)$, let $\phi_{\varepsilon }(x)=\varepsilon ^{-d}\phi\left(
x/\varepsilon \right) ,x\in \bR^{d_1}$. By It\^{o}'s formula, for each $x\in\bR^{d_1}$,  $\bP$-a.s.\ for all $t\in [0,T]$, 
\begin{align}\label{eq:productruleIto-Wentzell}
F_t(x) \phi_{\varepsilon }(x-L_{t}) &= F_0(x)\phi_{\varepsilon }(x-L_0)-\int_{]0,t]}F_{s-}(x) \partial _{i}\phi_{\varepsilon
}(x-L_{s-})dL_{s}^{i}\notag\\
&\quad  + \mathbf{1}_{\{2\}}(\alpha)\frac{1}{2}\int_{]0,t]}F_s(x)\partial _{ij}\phi_{\varepsilon
}(x-L_{s})d\langle
L^{c;i},L^{c;j}\rangle _{s}+\int_{]0,t]}\phi_{\varepsilon }\left( x-L_{s}\right)
f_s(x)ds\notag\\
&\quad + \mathbf{1}_{\{2\}}(\alpha)\int_{]0,t]}g^{\varrho}_{s}(x)\partial _{i}\phi_{\varepsilon
}(x-L_{s})d\langle
w^{\varrho},L^{c;i}\rangle _{s}+\int_{]0,t]}\phi_{\varepsilon }\left( x-L_{s}\right) g^{\varrho}_s(x)dw^{\varrho}_{s} \notag\\
&\quad +\int_{]0,t]}\int_{Z}\phi_{\varepsilon }\left( x-L_{s-}\right)
h_s(x,z)[\mathbf{1}_{D}(z)q(dr,dz)+\mathbf{1}_{E}(z)p(dr,dz)]\\
&\quad +\sum_{s\leq t}\Delta F_s(x) \left(\phi_{\varepsilon
}(x-L_{s})-\phi_{\varepsilon }\left( x-L_{s-}\right) \right) \notag\\
&\quad +\sum_{s\leq t}F_{s-}(x)\left( \phi_{\varepsilon }(x-L_{s})-\phi_{\varepsilon }(x-L_{s-})+\partial _{i}\phi_{\varepsilon }\left(x-
L_{s-}\right) \Delta L_{s}\right).
\end{align}
Appealing to assumption (2) and \eqref{ineq:boundsfubini} (i.e. for the integrals against $F$),  we  integrate both sides of the above in $x$, apply Corollary \ref{cor:StochasticFubini} (see, also, Remark \ref{rem:stochasticfubini}) and the deterministic Fubini theorem, and then integrate by parts  to get that   $\bP$-a.s.\ for all $t\in [0,T]$, 
\begin{align}\label{eq:Ito-Wentzellapprox}
F^{(\varepsilon) }_{t }(L_{t})
&=F^{(\varepsilon)}_0(L_0)+\int_{]0,t]}\nabla F^{(\varepsilon)
}_{s-}(L_{s-})[\mathbf{1}_{[1,2]}(\alpha)dA^i_s+\mathbf{1}_{\{2\}}(\alpha)dL ^{c;i}_s]+\int_{]0,t]}f^{(\varepsilon) }_s(L_{s})dr \notag\\
&\quad +\int_{]0,t]}g^{(\varepsilon) }_s(L_{s})dw^{\varrho}_{s}+\int_{]0,t]}\int_{Z}h^{(\varepsilon) }_s(L_{s-},z)[\mathbf{1}_{D}(z)q(dr,dz)+\mathbf{1}_{E}(z)p(dr,dz)]\notag\\
&\quad +\mathbf{1}_{\{2\}}(\alpha)\frac{1}{2}\int_{]0,t]}\partial_{ij}F^{(\varepsilon)
}_s(L_{s})d\langle L^{c;i},L^{c;j}\rangle _{s} + \mathbf{1}_{\{2\}}(\alpha)\int_{]0,t]}\partial _{i}g
^{(\varepsilon);\varrho }_s (L_{s})d\langle w^{\varrho},L^{c;i}\rangle_s
\notag\\
&\quad +\sum_{s\leq t}\left(\Delta F^{(\varepsilon) }_s(L_{s})-\Delta F^{(\varepsilon)
}_s(L_{s-})\right)\notag\\
&\quad +\sum_{s\leq t}\left(F^{(\varepsilon) }_{s-}(L_{s})-F^{(\varepsilon)
}_{s-}(L_{s-})-\mathbf{1}_{[1,2]}(\alpha)\nabla F^{(\varepsilon) }_{s-}(L_{s-})\Delta L_s\right) \\
\end{align}
where for each $\omega,t,x,$ and $z$,
\begin{equation*}
F^{(\varepsilon )}_t(x):=\phi_{\varepsilon }\ast F_t(x),\;\;f^{(\varepsilon)
}_t=\phi_{\varepsilon }\ast f_t(x),\;\;g ^{(\varepsilon);\varrho }_t(x)=\phi_{\varepsilon
}\ast g^{\varrho}_t (x),\;\;h^{(\varepsilon) }_t(x,z)=\phi_{\varepsilon }\ast h_t(x,z),
\end{equation*}%
and $\ast $ denotes the convolution operator on $\bR^{d_1}$.  Let $B_{R+1}=\{x\in\bR^{d_1}: |x|\le R+1\}$.   Owing to assumption (1)(a) and standard properties of mollifiers,  for each multi-index $\gamma$ with $|\gamma|\le \alpha$, $\bP$-a.s.\ for all $t$,
$$
|\partial^{\gamma}F^{(\varepsilon )}_t(L_t)|\le \sup_{t\le T}\sup_{x\in B_{R+1}}| \partial^{\gamma}F_t(x)|<\infty
$$
and for each  $x$,
\begin{equation}\label{eq:convergenceFepsilon}
d\bP dt-\lim_{\varepsilon\downarrow 0}| \partial^{\gamma}F^{(\varepsilon )}_t(x)-\partial^{\gamma}F^{(\varepsilon )}_t(x)|=0.
\end{equation}
Similarly, by assumption 1(b), $d\bP dt$-almost-all $(\omega,t)\in \Omega\times [0,T]$,
\begin{gather*}
|f^{(\varepsilon)}_t(L_t)|\le \sup_{x\in B_{R+1}}|f_t(x)|<\infty, \quad  |g^{(\varepsilon)}_t(L_t)|\le \sup_{x\in B_{R+1}}|g_t(x)|<\infty,  \\
\int_{D}|h^{\varepsilon}_t(L_t,z)|^2\pi(dz)\le  \sup_{x\in B_{R+1}}\int_{D}|h_t(x,z)|^2\pi(dz),\\
\int_{E}|h^{\varepsilon}_t(L_t,z)|\pi(dz)\le \sup_{x\in B_{R+1}}\int_{E}|h_t(x,z)|\pi(dz)
\end{gather*}
and for each $x$,
$$
d\bP dt-\lim_{\varepsilon\downarrow 0} |f^{(\varepsilon)}_t(x) -f_t(x)|=0,  \quad d\bP dt-\lim_{\varepsilon\rightarrow 0}| g^{(\varepsilon)}_t(x) -g_t(x)|=0
$$
and
\begin{equation}\label{eq:hconvergstochasticfubini}
d\bP dt-\lim_{\varepsilon\downarrow 0}\int_{Z}[\mathbf{1}_{D}(z)|h^{(\varepsilon )}_t(x,z)-h_t(x,z)|^2+\mathbf{1}_{E}(z)|h^{(\varepsilon )}_t(x,z)-h_t(x,z)|]\pi(dz)=0,
\end{equation}
where in the last-line we have also used Minkowski's integral inequality and a standard mollifying convergence argument.  Using assumption 1(d),  for each $\rho\ge1$ and $i\in\{1,\ldots,d_1\}$ and for $d\bP d |\langle L^{c;i},w^{\varrho}\rangle|_t$-almost-all $(\omega,t)\in \Omega\times [0,T]$
$$
| \nabla g^{(\varepsilon);i\varrho}_t(L_t)|\le\sup_{x\in B_{R+1}}| \nabla g^{i\varrho}_t(x)|
$$
and for each $x$,
 $$
 d\bP d |\langle L^{c;i},w^{\varrho}\rangle|_t-\lim_{\varepsilon\rightarrow 0}| \nabla g^{(\varepsilon);i\varrho}_t(x) -\nabla g^{i\varrho}_t(x)|=0, \;\;\textrm{if } \alpha= 2.  
 $$
  Owing to assumption  1(a) and \eqref{ineq:boundsfubini}, $\bP$-a.s.\, 
\begin{gather*}
\sum_{s\le t}|F^{(\varepsilon) }_{s-}(L_{s})-F^{(\varepsilon)
}_{s-}(L_{s-})-\mathbf{1}_{[1,2]}(\alpha)\nabla F^{(\varepsilon) }_{s-}(L_{s-})\Delta L_s|\\
\le \sup_{t\le T}|F_{t}|_{\alpha;B_{R+1}}
\sum_{s\le t}|\Delta L_s|^{\alpha }\le R\sup_{t\le T}|F_{t}|_{\alpha;B_{R+1}}.
\end{gather*}
Since $\bP$-a.s.\ $F\in D([0,T];\cC^{\alpha}(\bR^d;\bR^m)$, it follows that for each $x$, $\bP$-a.s.\ for all $t$,
$$
\lim_{\varepsilon\downarrow 0}|\Delta F_t^{\varepsilon}(x)-\Delta F_t(x)|=0.
$$
By assumption (2), $\bP$-a.s for all $t$, we have
$$
\sum_{s\leq t}\left(\Delta F^{(\varepsilon) }_s(L_{s-}+\Delta L_{s})-\Delta F^{(\varepsilon)
}_s(L_{s-})\right)\le  \sum_{s\leq t}|\Delta F_{t}|_{\alpha\wedge 1;B_{R+1}}|\Delta L_s|^{\alpha \wedge 1}.
$$
Combining the above and using assumptions (1)(a) and (2) and the bounds given in \eqref{ineq:boundsfubini}  and the deterministic and stochastic dominated convergence theorem, we obtain convergence of all the terms in  \eqref{eq:Ito-Wentzellapprox}, which complete the proof.
\end{proof}

\bibliographystyle{alpha}
\bibliography{../../bibliography}

\begin{thebibliography}{BvNVW08}

\bibitem[BvNVW08]{BrNeVeWe08}
Z.~Brze{{\'z}}niak, J.~M. A.~M. van Neerven, M.~C. Veraar, and L.~Weis.
\newblock It\^o's formula in {UMD} {B}anach spaces and regularity of solutions
  of the {Z}akai equation.
\newblock {\em J. Differential Equations}, 245(1):30--58, 2008.

\bibitem[CK10]{ChKi10}
Zhen-Qing Chen and Kyeong-Hun Kim.
\newblock An l p-theory of non-divergence form spdes driven by l{\'e}vy
  processes.
\newblock {\em arXiv preprint arXiv:1007.3295}, 2010.

\bibitem[DHI13]{DeHoIm13}
Arnaud Debussche, Michael H{{\"o}}gele, and Peter Imkeller.
\newblock {\em The dynamics of nonlinear reaction-diffusion equations with
  small {L}{\'e}vy noise}, volume 2085 of {\em Lecture Notes in Mathematics}.
\newblock Springer, Cham, 2013.

\bibitem[DPMT07]{DaMeTu07}
Giuseppe Da~Prato, Jose-Luis Menaldi, and Luciano Tubaro.
\newblock Some results of backward {I}t\^o formula.
\newblock {\em Stoch. Anal. Appl.}, 25(3):679--703, 2007.

\bibitem[DPZ92]{DaZa92}
Giuseppe Da~Prato and Jerzy Zabczyk.
\newblock {\em Stochastic equations in infinite dimensions}, volume~44 of {\em
  Encyclopedia of Mathematics and its Applications}.
\newblock Cambridge University Press, Cambridge, 1992.

\bibitem[GM11]{GrMi11}
B.~Grigelionis and R.~Mikulevicius.
\newblock Nonlinear filtering equations for stochastic processes with jumps.
\newblock In {\em The {O}xford handbook of nonlinear filtering}, pages 95--128.
  Oxford Univ. Press, Oxford, 2011.

\bibitem[Gri76]{Gr76}
B.~Grigelionis.
\newblock Reduced stochastic equations of nonlinear filtering of random
  processes.
\newblock {\em Litovsk. Mat. Sb.}, 16(3):51--63, 232, 1976.

\bibitem[Gy{\"o}82]{Gy82}
I.~Gy{\"o}ngy.
\newblock On stochastic equations with respect to semimartingales. {III}.
\newblock {\em Stochastics}, 7(4):231--254, 1982.

\bibitem[Hau05]{Ha05}
Erika Hausenblas.
\newblock Existence, uniqueness and regularity of parabolic {SPDE}s driven by
  {P}oisson random measure.
\newblock {\em Electron. J. Probab.}, 10:1496--1546, 2005.

\bibitem[H{\O}UZ10]{HoOkUbZh10}
Helge Holden, Bernt {\O}ksendal, Jan Ub{\o}e, and Tusheng Zhang.
\newblock {\em Stochastic partial differential equations}.
\newblock Universitext. Springer, New York, second edition, 2010.
\newblock A modeling, white noise functional approach.

\bibitem[Jac79]{Ja79}
Jean Jacod.
\newblock {\em Calcul stochastique et probl{\`e}mes de martingales}, volume 714
  of {\em Lecture Notes in Mathematics}.
\newblock Springer, Berlin, 1979.

\bibitem[JS03]{JaSh03}
Jean Jacod and Albert~N. Shiryaev.
\newblock {\em Limit theorems for stochastic processes}, volume 288 of {\em
  Grundlehren der Mathematischen Wissenschaften [Fundamental Principles of
  Mathematical Sciences]}.
\newblock Springer-Verlag, Berlin, second edition, 2003.

\bibitem[KR77]{KrRo77}
N.~V. Krylov and B.~L. Rozovski{\u\i}.
\newblock The {C}auchy problem for linear stochastic partial differential
  equations.
\newblock {\em Izv. Akad. Nauk SSSR Ser. Mat.}, 41(6):1329--1347, 1448, 1977.

\bibitem[KR81]{KrRo81}
N.~V. Krylov and B.~L. Rozovski{\u\i}.
\newblock On the first integrals and {L}iouville equations for diffusion
  processes.
\newblock In {\em Stochastic differential systems ({V}isegr{\'a}d, 1980)},
  volume~36 of {\em Lecture Notes in Control and Information Sci.}, pages
  117--125. Springer, Berlin-New York, 1981.

\bibitem[Kry99]{Kr99}
N.~V. Krylov.
\newblock An analytic approach to {SPDE}s.
\newblock In {\em Stochastic partial differential equations: six perspectives},
  volume~64 of {\em Math. Surveys Monogr.}, pages 185--242. Amer. Math. Soc.,
  Providence, RI, 1999.

\bibitem[Kry11]{Kr11}
N.~V. Krylov.
\newblock On the {I}t\^o-{W}entzell formula for distribution-valued processes
  and related topics.
\newblock {\em Probab. Theory Related Fields}, 150(1-2):295--319, 2011.

\bibitem[Kun81]{Ku81}
Hiroshi Kunita.
\newblock On the decomposition of solutions of stochastic differential
  equations.
\newblock In {\em Stochastic integrals ({P}roc. {S}ympos., {U}niv. {D}urham,
  {D}urham, 1980)}, volume 851 of {\em Lecture Notes in Math.}, pages 213--255.
  Springer, Berlin-New York, 1981.

\bibitem[Kun86]{Ku86}
H.~Kunita.
\newblock {\em Lectures on stochastic flows and applications}, volume~78 of
  {\em Tata Institute of Fundamental Research Lectures on Mathematics and
  Physics}.
\newblock Published for the Tata Institute of Fundamental Research, Bombay; by
  Springer-Verlag, Berlin, 1986.

\bibitem[Kun04]{Ku04}
Hiroshi Kunita.
\newblock Stochastic differential equations based on {L}{\'e}vy processes and
  stochastic flows of diffeomorphisms.
\newblock In {\em Real and stochastic analysis}, Trends Math., pages 305--373.
  Birkh{\"a}user Boston, Boston, MA, 2004.

\bibitem[LM14a]{LeMi14a}
James-Michael Leahy and Remigijus Mikulevicius.
\newblock On degenerate linear stochastic evolutions equations driven by jump
  processes.
\newblock {\em arXiv preprint arXiv:1406.4541}, 2014.

\bibitem[LM14b]{LeMi14b}
James-Michael Leahy and Remigijus Mikulevicius.
\newblock On some properties of space inverses of stochastic flows.
\newblock {\em arXiv preprint arXiv:1411.6277}, 2014.

\bibitem[LS89]{LiSh89}
R.~Sh. Liptser and A.~N. Shiryayev.
\newblock {\em Theory of martingales}, volume~49 of {\em Mathematics and its
  Applications (Soviet Series)}.
\newblock Kluwer Academic Publishers Group, Dordrecht, 1989.
\newblock Translated from the Russian by K. Dzjaparidze [Kacha Dzhaparidze].

\bibitem[MB07]{Me07}
Thilo Meyer-Brandis.
\newblock Stochastic {F}eynman-{K}ac equations associated to {L}{\'e}vy-{I}t\^o
  diffusions.
\newblock {\em Stoch. Anal. Appl.}, 25(5):913--932, 2007.

\bibitem[Mey76]{Me76}
P.~A. Meyer.
\newblock La th{\'e}orie de la pr{\'e}diction de {F}. {K}night.
\newblock In {\em S{\'e}minaire de {P}robabilit{\'e}s, {X} ({P}remi{\`e}re
  partie, {U}niv. {S}trasbourg, {S}trasbourg, ann{\'e}e universitaire
  1974/1975)}, pages 86--103. Lecture Notes in Math., Vol. 511. Springer,
  Berlin, 1976.

\bibitem[Mik83]{Mi83}
R.~Mikulevi{\v{c}}ius.
\newblock Properties of solutions of stochastic differential equations.
\newblock {\em Litovsk. Mat. Sb.}, 23(4):18--31, 1983.

\bibitem[Mik00]{Mi00}
R.~Mikulevicius.
\newblock On the {C}auchy problem for parabolic {SPDE}s in {H}{\"o}lder
  classes.
\newblock {\em Ann. Probab.}, 28(1):74--103, 2000.

\bibitem[MP09]{MiPr09a}
R.~Mikulevicius and H.~Pragarauskas.
\newblock On {H}{\"o}lder solutions of the integro-differential {Z}akai
  equation.
\newblock {\em Stochastic Process. Appl.}, 119(10):3319--3355, 2009.

\bibitem[MP76]{MePi75}
Michel M{{\'e}}tivier and Giovanni Pistone.
\newblock Une formule d'isom{\'e}trie pour l'int{\'e}grale stochastique
  hilbertienne et {\'e}quations d'{\'e}volution lin{\'e}aires stochastiques.
\newblock {\em Z. Wahrscheinlichkeitstheorie und Verw. Gebiete}, 33(1):1--18,
  1975/76.

\bibitem[Par72]{Pa72}
{{\'E}}tienne Pardoux.
\newblock Sur des {\'e}quations aux d{\'e}riv{\'e}es partielles stochastiques
  monotones.
\newblock {\em C. R. Acad. Sci. Paris S{\'e}r. A-B}, 275:A101--A103, 1972.

\bibitem[Par75]{Pa75}
{{\'E}}tienne Pardoux.
\newblock \'{E}quations aux d{\'e}riv{\'e}es partielles stochastiques de type
  monotone.
\newblock In {\em S{\'e}minaire sur les \'{E}quations aux {D}{\'e}riv{\'e}es
  {P}artielles (1974--1975), {III}, {E}xp. {N}o. 2}, page~10. Coll{\`e}ge de
  France, Paris, 1975.

\bibitem[Pri12]{Pr12}
Enrico Priola.
\newblock Pathwise uniqueness for singular {SDE}s driven by stable processes.
\newblock {\em Osaka J. Math.}, 49(2):421--447, 2012.

\bibitem[Pri14]{Pr14}
Enrico Priola.
\newblock Stochastic flow for {SDE}s with jumps and irregular drift term.
\newblock {\em arXiv preprint arXiv:1405.2575}, 2014.

\bibitem[Pro05]{Pr05}
Philip~E. Protter.
\newblock {\em Stochastic integration and differential equations}, volume~21 of
  {\em Stochastic Modelling and Applied Probability}.
\newblock Springer-Verlag, Berlin, 2005.
\newblock Second edition. Version 2.1, Corrected third printing.

\bibitem[PZ07]{PeZa07}
S.~Peszat and J.~Zabczyk.
\newblock {\em Stochastic partial differential equations with {L}{\'e}vy
  noise}, volume 113 of {\em Encyclopedia of Mathematics and its Applications}.
\newblock Cambridge University Press, Cambridge, 2007.
\newblock An evolution equation approach.

\bibitem[Roz90]{Ro90}
B.~L. Rozovski{\u\i}.
\newblock {\em Stochastic evolution systems}, volume~35 of {\em Mathematics and
  its Applications (Soviet Series)}.
\newblock Kluwer Academic Publishers Group, Dordrecht, 1990.
\newblock Linear theory and applications to nonlinear filtering, Translated
  from the Russian by A. Yarkho.

\bibitem[RZ07]{RoZh07}
Michael R{{\"o}}ckner and Tusheng Zhang.
\newblock Stochastic evolution equations of jump type: existence, uniqueness
  and large deviation principles.
\newblock {\em Potential Anal.}, 26(3):255--279, 2007.

\bibitem[Tin77]{Ti77b}
E.~Tinfavi{\v c}ius.
\newblock Linearized stochastic equations of nonlinear filtering of random
  processes.
\newblock {\em Lithuanian Mathematical Journal}, 17(3):321--334, 1977-07-01
  1977.

\bibitem[Ver12]{Ve12}
Mark Veraar.
\newblock The stochastic {F}ubini theorem revisited.
\newblock {\em Stochastics}, 84(4):543--551, 2012.

\bibitem[Wal86]{Wa86}
John~B. Walsh.
\newblock An introduction to stochastic partial differential equations.
\newblock In {\em \'{E}cole d'{\'e}t{\'e} de probabilit{\'e}s de
  {S}aint-{F}lour, {XIV}---1984}, volume 1180 of {\em Lecture Notes in Math.},
  pages 265--439. Springer, Berlin, 1986.

\bibitem[Zha13]{Zh13b}
Xicheng Zhang.
\newblock Degenerate irregular {SDE}s with jumps and application to
  integro-differential equations of {F}okker-{P}lanck type.
\newblock {\em Electron. J. Probab.}, 18:no. 55, 25, 2013.

\bibitem[Zho13]{Zh13f}
Guoli Zhou.
\newblock Global well-posedness of a class of stochastic equations with jumps.
\newblock {\em Adv. Difference Equ.}, pages 2013:175, 23, 2013.

\end{thebibliography}

\end{document}